\newtheorem{Thm}{Theorem}[section] 
\newtheorem{Lem}[Thm]{Lemma}
\newtheorem{Cor}[Thm]{Corollary}
\newtheorem{Cor.Conj}[Thm]{Corollary of Conjecture}
\newtheorem{Prop}[Thm]{Proposition}
\newtheorem{Conj}[Thm]{Conjecture}
\newtheorem{Claim}[Thm]{Claim}
\theoremstyle{remark}
\newtheorem{Rem}[Thm]{Remark}
\newtheorem{Ex}[Thm]{Example}
\theoremstyle{definition}
\newtheorem{Def}[Thm]{Definition}
\newtheorem{Step}{Step}
\newtheorem{Stp}{Step}
\newtheorem{Aspect}{Aspect}
\newtheorem*{ack}{Acknowledgements}
\newtheorem{Case}{Case}
\newtheorem*{Subcase}{Subcase}
\newcommand{\coeff}{\mathop{\mathrm{coeff}}\nolimits}
\newcommand{\R}{\ensuremath{\mathbb{R}}}
\newcommand{\C}{\ensuremath{\mathbb{C}}}
\newcommand{\Z}{\ensuremath{\mathbb{Z}}}
\newcommand{\Q}{\mathbb{Q}}
\newcommand{\F}{\mathbb{F}}
\newcommand{\A}{\mathbb{A}}
\newcommand{\B}{\mathcal{B}}
\newcommand{\D}{\mathbb{D}}
\newcommand{\E}{\mathbb{E}}
\newcommand{\X}{\mathcal{X}}
\newcommand{\OO}{\mathcal{O}}
\newcommand{\M}{\mathcal{M}}
\newcommand{\PP}{\mathbb{P}}
\begin{document}

\title[For type II degenerating K3 surfaces]
{PL density invariant for \\ 
Type II degenerating K3 surfaces, 
Moduli compactification and hyperK\"ahler metrics}

\author{Yuji Odaka}
\date{\today}

\maketitle

\begin{abstract}

A protagonist here is a new-type invariant for type II degenerations of 
K3 surfaces, which is explicit 
{\it PL (piecewise linear) convex function} from 
the interval with at most $18$ non-linear points. 
Forgetting its actual function behaviour, it also classifies 
the type II degenerations into several combinatorial types, 
depending on the type of root lattices as appeared in 
classical examples. 

From differential geometric viewpoint, 
the function is obtained as the density function of the 
limit measure on 
the collapsing hyperK\"ahler metrics to conjectural segments, as in \cite{HSZ}. 
On the way, we also reconstruct a moduli compactification 
of elliptic K3 surfaces by \cite{Brun, AB, ABE} in a more elementary 
manner, 
analyze the cusps more explicitly. 

We also interpret the glued hyperK\"ahler fibration of \cite{HSVZ} as a special case from 
our viewpoint, discuss other cases, and 
possible relations with Landau-Ginzburg models in 
the mirror symmetry context. 

\end{abstract}


\setcounter{tocdepth}{1}
\tableofcontents

\section{Introduction}

In this paper, to each type II degeneration of polarized 
K3 surfaces $\pi\colon (\mathcal{X},\mathcal{L})\to \Delta=\{t\in \C\mid 
|t|<1\}$, we associate an explicit piecewise linear convex function $V=V_{\pi}\colon [0,1]\to \R_{\ge 0} (\cup \infty)$ 
over the interval, as a new type invariant and discuss its geometric meanings 
from various geometric perspectives. The non-differential points of  $V$ are 
at most $18$ points and anyhow the behaviour of $V$ 
is completely classified. 
If $\mathcal{L}$ are assumed to be of relative Hodge (integral) class as in algebraic geometry, the function $L$ is rational while 
if we extend to relative K\"ahler class on (not necessarily algebraic) 
$\X$, 
then we obtain not necessarily rational bend points. 

From differential geometric perspective, 
this is done by considering the behaviour of hyperK\"ahler metrics on 
the fibers $\mathcal{X}_{t}=\pi^{-1}(t)$ with the K\"ahler class 
in $\R_{>0} c_{1}(\mathcal{L}_{t}=\mathcal{L}|_{\mathcal{X}_{t}})$ 
with diameter bounded rescale, 
as our function $V$ is the density function of 
a limit measure on the conjectural limit interval as 
predicted in recent \cite{HSZ}. 
As inferred from such background, we can actually define $V$ 
for not only holomorphic one parameter degeneration but for 
more general sequences ``of type II''. 

The ends behaviour of $V$ is encoded in the 
root lattices of type D or E while the open part is reflected in 
type A lattices. This root lattice-theoritic information has 
classically appeared and studied at least in lower degree case 
e.g., in \cite{Fri}, and also in 
recent \cite[\S 3B, 9.10]{AET}, \cite[\S 1]{LO},  and \cite{ABE}. 
Our exploration aims to reveal their hidden meanings. 

\subsubsection*{History of this work}
This paper originally stems out as a part of the series for 
ongoing joint work with Y.Oshima on 
collapsing of hyperK\"ahler metrics, with recent focus on 
K3 surfaces to segments, with great inspirations input 
from \cite{HSZ} and \cite{ABE} as well. 
Our whole framework depends on the one 
initiated in our previous joint paper \cite{OO18} (its short 
summary is \cite{OO.announce}), 
whose particular focus of the latter part was on type III degenerations and associated collapsing 
to spheres. 

Also the recent log KSBA style 
explicit compactification work 
of moduli of {\it elliptic} K3 surfaces by \cite{Brun, ABE, AB} has much to do with 
our work. In particular, $V$ implicitly appears 
in \cite{ABE} in the form of their 
integral affine spheres construction, and used in the projective moduli variety 
construction, 
much to our surprize then. 

\vspace{2mm}
Here is the comparison, partially to give an overview of this paper. 

\subsubsection*{Comparison and Organization}
While \cite[\S 7A]{ABE} implicitly obtained the definition of $V$ 
in the form of its ``graphs'' as integral affine spheres, 
Oshima (\cite{Osh}) also 
had definition of $V$ indepedently, as a function 
for the collapsing K3 surfaces to segment. 
Then he proves that it is the limit measure of 
the McLean metric on $\PP^{1}$, 
from periods calculation along explicit 
$2$-cycles. 

Part 2 of this paper provides another algebro-geometric proof 
of the theorem of Oshima, which is the heart of the paper. 
Before that, in preparatory Part 1, we give an elementary reproof 
and analysis of the stable reduction 
corresponding to \cite{ABE}. The 
reproof has virtue for the arguments in Part 2. More precisely 
speaking, the information of asymptotic behaviour of 
singular fibers analyzed in Part 1, 
not only the location of limits of discriminants, is crucially 
used in Part 2. 

In Part 2, we also connect our work with \cite{HSVZ}, 
in which we interpret as a special case with the ``type EAE''. 
There are other 
interesting cases whose label include ``type D''.

\vspace{3mm}

Part 1 can and do work over an arbitary 
algebraically closed field $K$ of characteristic neither 
$2$ nor $3$, unless otherwise stated. The assumption on 
characteristic is frequently used, 
especially for the Weierstrass standard form description of 
elliptic curves and the reducedness of the finite 
group schemes $\mu_{2}$ and $\mu_{3}$ over $K$. 

On the other hand, Part 2 works over $\C$ as, 
for instance, discussions involve hyperK\"ahler metrics. 

\begin{ack}
As noted above, this paper stems out as a part of 
collaboration with Y.Oshima, and we plan for more sequels. 
So first of all, the author thanks Y.Oshima 
for the ongoing fruitful and enjoyable discussions, 
as well as the permission to emit this part of results in this form.  

We also would like to thank 
V.Alexeev, K.Ascher, P.Engel, S.Honda, 
H.Iritani, S.Sun, J.Viaclovsky, Y-S.Lin 
for the helpful and friendly discussions. 
The author is partially supported by 
KAKENHI 18K13389 (Grant-in-Aid for Early-Career Scientists), 
KAKENHI 16H06335 (Grant-in-Aid for Scientific Research (S)) and 
KAKENHI 20H00112 (Grant-in-Aid for Scientific Research (A)) 
during this research. 
\end{ack}
\part{Moduli of elliptic K3 surfaces revisited}
\section{Review of \cite[\S 7]{OO18} and analysis of cusps}

In the work \cite[\S7]{OO18} on collapsing of K3 surfaces, 
the moduli $M_{W}(\C)$ of complex 
Weierstrass elliptic K3 surfaces played an important role as it parametrizes  
real $2$-dimensional collapses (``tropical K3 surfaces'') of 
K\"ahler K3 surfaces. 
Still keeping it as one of the motivations, 
we first make further analysis on $M_{W}$ 
in this paper. It also naturally extends to 
other field $K$. First, we set up or recall the notation. 

We set $\mathbb{A}^{22}$, which 
parametrizes the 
coefficients of degree $8$ polynomial $g_{8}$ 
and the 
coefficients of degree $12$ polynomial $g_{12}$. 

Recall from \cite[\S7.1]{OO18} that $\overline{M_{W}}$ is nothing but the GIT quotient of 
$\mathbb{A}^{22}_{g_{8},g_{12}}\setminus \{0\}$ by the action of ${\rm GL}(2)$, 
or in other words, that of 
\begin{align}\label{WP.par}
\mathbb{P}(
\underbrace{2,2,2,2,2,2,2,2,2}_9,
\underbrace{3,3,3,3,3,3,3,3,3,3,3,3,3}_{13})\\
(=(\mathbb{A}^{22}_{g_{8},g_{12}}\setminus \{0\})/\mathbb{G}_{m}(K))
\end{align}
by the further action of $SL(2)$. 
We denote the homogeneous coordinates of the base $\mathbb{P}^{1}$ 
as $s_{1}$ and $s_{2}$, and set $s:=\frac{s_{1}}{s_{2}}$.

Recall that \cite[\S 7.2.1]{OO18} shows 
$\overline{M_{W}}$ is isomorphic to the Satake-Baily-Borel compactification for an appropriate $O(2,18)$ orthogonal 
symmetric variety (and also has the structure as 
a double ({\it anti-(!)}holomorphic) 
covering over the boundary component $\mathcal{M}_{\rm K3}(a)$ 
of $\overline{M_{\rm K3}}$ in \cite[\S6]{OO18}), which appears in the context of F-theory e.g., as {\it classical F-theory moduli space} in \cite{CM05}. 
See \cite[\S 6.1]{OO18}, 
in particular its last discussion for the 
proof of Theorem 6.6 of 
\S 7.3.7 in {\it loc.cit} for the details. 

\vspace{5mm} 
Now we head towards more explicit understanding of cusps of the 
compactification. 
From the uniformization 
structure $M_{W}\simeq \Gamma\backslash \mathcal{D}$, 
with orthogonal symmetric domain $\mathcal{D}$, 
there is 
the natural branch divisor $B$ in $M_{W}$ with the standard 
coefficients. 
From \cite[Proposition 3.4]{MumHir}, it follows that 
$(\overline{M_{W}},\bar{B})$ is the log canonical model 
and the three cusps $M_{W}^{\rm nn}$ and $M_{W}^{\rm seg}$,  
$M_{W}^{\rm nn}\cap M_{W}^{\rm seg}$ are the set of (all) 
log canonical centers. 

An important point to notice is that ${\rm Supp}(\bar{B})$ 
actually contain both of $M_{W}^{nn}$ and $M_{W}^{seg}$. Indeed, 
as we see below later, $M_{W}$ (with{\it out} the branch divisor) 
are log terminal around both log canonical centers. 

More direct way to see it is as follows. 
Recall from \cite[\S 7.1.5]{OO18} that 
the locus $S_{b}$ corresponding to 
$(b)$ in {\it loc.cit}, i.e., the 
surface in $M_{W}$ isomorphic to 
$\mathbb{A}^{1}\times \A^{1}$, parametrize 
Kummer surfaces for the product of elliptic curves 
$E_{1}\times E_{2}$ and the closure include 
both $M_{W}^{nn}$ and $M_{W}^{seg}$. 
As \cite[Proposition 7.8]{OO18} shows, 
for all such Kummer surfaces, the corresponding Weierstrass models  
contain 
four $D_{4}$-singularities which are ordinary cusps fiberwise, 
as a birational transform of $(E_{1}\times E_{2})/(\Z/2\Z)$. 
The Heegner divisor of $M_{W}$, which 
corresponds to their 
partial smoothings with a single $A_{1}$-singularity, 
contain the locus $S_{b}$ obviously. 

\subsubsection{Around $M_{W}^{\rm nn}$}

As the locus $M_{W}^{nn}(\setminus M_{W}^{\rm seg})$ locates inside 
the (strictly) stable locus inside the GIT quotient $\overline{M_{W}}$
 (cf., \cite[\S 7.1.1]{OO18}) 
it follows that the stabilizer of the $GL(2)$-action on 
$\mathbb{A}^{22}$ which represents a point inside 
$M_{W}^{nn}$ is finite. Furthermore, it is generically the 
Klein four group, i.e., $(\Z/2\Z)^{\oplus 2}$ and becomes 
larger only at finite points in 
$M_{W}^{nn}$ (e.g., when the corresponding degree $4$ polynomial 
$G_{4}$
is $s_{1}s_{2}(s_{1}-s_{2})(s_{1}+s_{2})$ (or $s^{3}-s$ in the way written in \cite{OO18}) 
so that the corresponding stabilizer group is 
$(\Z/2\Z)^{\oplus 3}$). 

Before our statements, we define the following 
singularity. 

\begin{Def}\label{K4.quot}
A canonical Gorenstein $3$-fold singularity whose germ is 
written as 
\begin{align}
\vec{0}\in [X^{2}=YZW]\subset \A^{4}
\end{align}
are denoted as $\mathcal{A}_{1}^{(3)}$ in this paper. 
Indeed, each component of the singular locus meeting at $\vec{0}$, 
\begin{itemize}
\item $X=Z=W=0, Y\neq 0$
\item $X=Y=W=0, Z\neq 0$
\item $X=Y=Z=0, W\neq 0$
\end{itemize}
are transversally $2$-dimensional $A_{1}$-singularity ($cA_{1}$), 
hence the name. 
It is also easy to see that this coincides with the quotient singularity 
by $(\Z/2\Z)^{\oplus 2}=K_{4}$ of $\A_{K}^{3}$ acting by the eigenvalues 
\begin{align*}
&(1,1,1,1)   &(\text{by the unit $e$ of }K_{4}),\\
&(1,-1,1,-1) &(\text{by an element $a$ of }K_{4}),\\
&(-1,-1,1,1) &(\text{by an element $b$ of }K_{4}),\\
&(-1,1,1,-1) &(\text{by the element $ab$ of }K_{4}). 
\end{align*}
\end{Def}

\begin{Thm}\label{nn.local}
At general points in $M_{W}^{nn}$, 
$M_{W}$ is formally (hence also analytically if $K=\C$) isomorphic to 
\begin{align}
(\mathcal{A}_{1}^{(3)}\times \mathcal{A}_{1}^{(3)}\times 
\mathcal{A}_{1}^{(3)}\times \mathcal{A}_{1}^{(3)})\times \A^{6},
\end{align}
hence canonical Gorenstein singular in particular. 
\end{Thm}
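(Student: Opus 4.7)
The plan is to apply Luna's étale slice theorem to the GIT presentation $\overline{M_W} = (\A^{22}\setminus\{0\})/\!\!/\,GL(2)$ recalled in the excerpt, taking a concrete representative of a general point of $M_W^{nn}$ and then unpacking the resulting $K_4$-quotient.

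For the representative, I would choose $\tilde p = (g_8^{(0)}, g_{12}^{(0)}) \in \A^{22}$ corresponding to a generic point inside the Kummer locus $S_b$: by the discussion of \cite[\S 7.1.5, Prop.~7.8]{OO18} reproduced in the excerpt, such a $\tilde p$ represents a Weierstrass K3 with four Kodaira $I_0^*$ singular fibers (four $D_4$-singularities on the Weierstrass threefold, the ``fiberwise ordinary cusps'' of the text) over a single generic $K_4$-orbit $\{s_1,\ldots,s_4\}\subset \PP^1$. Fixing $K_4\subset PGL(2)$ to be generated by $s\mapsto -s$ and $s\mapsto 1/s$, one explicit choice is $g_8^{(0)} = A^2$ and $g_{12}^{(0)} = A^3$ for a generic $K_4$-invariant binary quartic $A$; the (effective) $SL(2)$-stabilizer of $\tilde p$ is then the stated $K_4$. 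Luna's theorem now identifies the completed local ring of $M_W$ at $[\tilde p]$ with $\widehat N / K_4$, where $N \subset T_{\tilde p}\A^{22}$ is a $K_4$-invariant complement to $T_{\tilde p}(GL(2)\cdot\tilde p)$; a dimension count gives $\dim N = 22 - 4 = 18 = \dim M_W$.

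The substantive content is the computation of $N$ as a $K_4$-representation and the identification of its quotient. I would perform a local-global partial-fractions decomposition of deformations of $(g_8,g_{12})$ along $\PP^1$,
\begin{equation*}
N \;=\; \bigoplus_{i=1}^4 N_i \;\oplus\; T,
\end{equation*}
where each $N_i$ ($3$-dimensional) collects the deformations that break the $I_0^*$-vanishing conditions of $(g_8,g_{12})$ at the base point $s_i$, modulo the part of the $SL(2)$-tangent coming from local reparametrization there; and $T$ ($6$-dimensional) collects deformations that preserve every $I_0^*$-condition and merely vary the underlying quartic $A$. Under $K_4$, the index set $\{1,\ldots,4\}$ is permuted simply transitively via $K_4 \hookrightarrow S_4$, but crucially this permutation is twisted by sign characters coming from the Weierstrass weights $(4,6)$ of $(g_8,g_{12})$ together with the Jacobians of the $K_4$-elements at the $s_i$; on $T$ the action is trivial.

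The main obstacle I foresee is to establish that, after these sign twists are accounted for, the $K_4$-quotient of $\bigoplus_i N_i$ really decouples into the product $(\mathcal{A}_1^{(3)})^{\times 4}$, rather than a single more complicated quotient of $\A^{12}$ by $K_4$ containing cross-invariants. The way I expect this to work is that, with the sign twists included, $\bigoplus_i N_i$ admits a $K_4$-equivariant change of basis under which it becomes four independent copies of the $3$-dimensional representation $\chi_a \oplus \chi_b \oplus \chi_{ab}$ of Definition \ref{K4.quot}, with $K_4$ acting diagonally \emph{within} each copy and not across copies. The explicit basis needed for this reorganisation should come from the detailed asymptotic analysis of the four $I_0^*$-fibers developed in Part 1 (``not only the location of limits of discriminants''). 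Granting this, the $K_4$-invariants factor as $(\A^3/K_4)^{\times 4} = (\mathcal{A}_1^{(3)})^{\times 4}$, and combined with the trivial $\A^6$ factor coming from $T$ this yields the stated formal isomorphism. The canonical Gorenstein claim is then automatic, since each $\mathcal{A}_1^{(3)}$ is a quotient of $\A^3$ by $K_4 \subset SL(3)$ (the eigenvalue tuples in Definition \ref{K4.quot} each have product $+1$), and products of canonical Gorenstein singularities with smooth factors remain canonical Gorenstein.
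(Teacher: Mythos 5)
Your overall framing (Luna slice at a $K_4$-stabilized lift, then reading off the stabilizer representation) is the same as the paper's, but the paper finishes by exhibiting an explicit slice spanned by the symmetrized monomials entering $R^{\rm rfn},Q^{\rm rfn},R'^{\rm rfn}$ (plus the modulus of $P_4$) and listing the $\pm1$ eigenvalues; it never introduces your per-fiber blocks $N_i$. The genuine gap in your proposal is exactly the step you flag as the main obstacle and then dispatch by a change of basis. Knowing that the slice is, $K_4$-equivariantly, $\mathbf{1}^{\oplus 6}\oplus\chi_a^{\oplus 4}\oplus\chi_b^{\oplus 4}\oplus\chi_{ab}^{\oplus 4}$ --- equivalently, that it can be written as four $3$-dimensional subrepresentations each isomorphic to $\chi_a\oplus\chi_b\oplus\chi_{ab}$ with $K_4$ acting within each copy --- does \emph{not} yield $(\mathcal{A}_1^{(3)})^{\times 4}\times\A^{6}$ after dividing: you divide the direct sum by a single diagonal copy of $K_4$, not by $K_4^{\times 4}$, so the invariant ring contains cross-invariants such as the product of two coordinates carrying the same character but lying in different copies, in \emph{every} basis. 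The obstruction is intrinsic: $(\mathcal{A}_1^{(3)})^{\times 4}\times\A^{6}$ is singular in codimension $2$, whereas the quotient of $\A^{18}$ by one $K_4$ acting with the above multiplicities is singular only along the images of the three fixed subspaces $\mathbf{1}^{\oplus 6}\oplus\chi^{\oplus 4}$, i.e.\ in codimension $8$; no equivariant change of basis can reconcile these, so the "decoupling" cannot be established by character bookkeeping at all. There is also an internal tension in your setup: if the four blocks $N_i$ attached to the four base points were literally permuted simply transitively by $K_4$, then $\bigoplus_i N_i\cong\mathrm{Ind}_{\{e\}}^{K_4}N_1$ would be three copies of the regular representation, hence contain $\mathbf{1}^{\oplus 3}$, which is incompatible with the decomposition $(\chi_a\oplus\chi_b\oplus\chi_{ab})^{\oplus 4}$ you want to reach afterwards; the per-point blocks cannot simultaneously be permuted and individually $K_4$-stable.

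A secondary issue is the choice of representative. Your $\tilde p=(A^2,A^3)$ is an honest K3, namely a Kummer surface of a product of elliptic curves with four $I_0^*$ fibers; it lies in the $2$-dimensional interior locus $S_b\subset M_W$, not over the cusp. General points of $M_W^{nn}$ are represented by pairs with identically vanishing discriminant, i.e.\ $(3A^2,A^3)$, the non-normal $\widetilde{\D}_{16}$-type Weierstrass surfaces; the closure of $S_b$ meets the cusp, but its generic point is not on it. (The paper's writing $(P_4^2,P_4^3)$ drops the factor $3$, but the intended point is on the discriminant-zero locus, as Theorem~\ref{nn.local} concerns the boundary of $\overline{M_W}$.) This slip happens not to change the stabilizer or the isotypic multiplicities, so it does not by itself invalidate the computation, but the justification via the Kummer/$D_4$ description of Definition~\ref{K4.quot}-type points argues about the wrong stratum and should be corrected; and even after correcting it, the central decoupling step above remains unproven and, as it stands, unprovable by the means you propose.
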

It is interesting as, with the branch divisor, 
it becomes one of strictly log canonical locus. 
\begin{proof}
We use the the Luna slice theorem \cite{Luna} (see also 
the exposition \cite[5.3]{Drezet}). Take a general point $p$ in $M_{W}^{\rm nn}$ 
and its lift $\tilde{p}$ to $\mathbb{A}_{g_{8},g_{12}}^{22}$ as 
$(P_{4}^{2},P_{4}^{3})$, where $P_{4}\in \mathcal{O}_{\mathbb{P}^{1}}(4)$ 
is of the form 
$(s_{1}^{2}-\epsilon^{2}s_{2}^{2})(s_{2}^{2}-\epsilon^{2}s_{1}^{2})$
so that its stabilizer is $K_{4}$ generated by 
\begin{align*}
(\text{switch}) \iota&\colon s_{1}\mapsto s_{2}, & s_{2}\mapsto s_{1} ,\\
(-1)_{s_{1}}&\colon s_{1}\mapsto -s_{1}, & s_{2}\mapsto s_{2}. 
\end{align*}

Now we construct slice at the above point in $\mathbb{A}_{g_{8},g_{12}}^{22}$ 
with respect to the natural ${\rm SL}(2)$-action as follows. 
Consider following 
regular parameter system (or holomorphic coordinates at neighborhood)
around $(P_{4}^{2},P_{4}^{3})\in \mathbb{A}^{22}_{g_{8},g_{12}}$: 
they are formed by $\coeff P_{4}$, the coefficients of the polynomial  $P_{4}$, 
which is introduced before, and those of 
\begin{align}
R^{\rm rfn}\in H^{0}(\mathbb{P}^{1},\mathcal{O}_{\mathbb{P}^{1}}(8)),\\ 
Q^{\rm rfn} \in H^{0}(\mathbb{P}^{1},\mathcal{O}_{\mathbb{P}^{1}}(4)),\\ 
R'^{\rm rfn}\in H^{0}(\mathbb{P}^{1},\mathcal{O}_{\mathbb{P}^{1}}(12)),
\end{align}
each of which are linear combinations of: 
\begin{itemize}

\item (for $R^{\rm rfn}$)
\begin{align*}
&s_{1}^{3}s_{2}^{5}\pm s_{1}^{5}s_{2}^{3}, \\ 
&s_{1}^{2}s_{2}^{6}\pm s_{1}^{6}s_{2}^{2} 
\end{align*}

\item (for $Q^{\rm rfn}$) 
\begin{align*}
s_{1}^{4}&\pm s_{2}^{4}, \\ 
s_{1}^{3}s_{2}&\pm s_{1}s_{2}^{3},\\ 
 &s_{1}^{2}s_{2}^{2}
\end{align*}

\item (for $R'^{\rm rfn}$)
\begin{align*}
s_{1}^{10}s_{2}^{2}&\pm s_{1}^{2}s_{2}^{10}, \\ 
s_{1}^{9}s_{2}^{3}&\pm s_{1}^{3}s_{2}^{9},\\ 
s_{1}^{8}s_{2}^{4}&\pm s_{1}^{4}s_{2}^{8},\\ 
s_{1}^{7}s_{2}^{5}&\pm s_{1}^{5}s_{2}^{7} 
\end{align*}
\end{itemize}
and we consider the points 
\begin{align}
(g_{8}=P_{4}^{2}+R^{\rm rfn}, g_{12}=P_{4}^{3}+(3P_{4})^{2}Q^{\rm rfn}+R'^{\rm rfn}), 
\end{align}
for those $R^{\rm rfn}, Q^{\rm rfn}, R'^{\rm rfn}$ 
which are generated by special ones above. 
Then this forms a ${\rm stab}(\tilde{p})$-invariant \'{e}tale slice. 
And the action of ${\rm stab}(\tilde{p})\simeq K_{4}$ whose generators we recall 
 as 
\begin{align}
(\text{switch}) \iota&\colon s_{1}\mapsto s_{2}, & s_{2}\mapsto s_{1} ,\\
(-1)_{s_{1}}&\colon s_{1}\mapsto -s_{1}, & s_{2}\mapsto s_{2},
\end{align}
act with eigenvalues $-1$ or $1$ on each basis vector above. 
Looking at the eigenvalues, the assertion readily follows. 
\end{proof}


\subsubsection{Around $M_{W}^{\rm seg}$}\label{MWseg}

Now, take a point $p\in M_{W}^{\rm seg}$ and its lift $\tilde{p}$ as 
$(c_{1}s_{1}^{4}s_{2}^{4},c_{2}s_{1}^{6}s_{2}^{6})$ for some $c_{1}, c_{2}\in K$, 
and consider the stabilizer group at the point with respect to the 
natural $GL(2)$-action, 
which we denote as ${\rm stab}(\tilde{p})$. 
It is simply isomorphic to 
$\mathbb{G}_{m}(K) \rtimes \mu_2(K)$ which acts as either 
\begin{align}
\{s_{1}\mapsto cs_{1}, s_{2}\mapsto c^{-1}s_{2}\mid c\neq 0\} \text{ or } \\ 
\{s_{1}\mapsto cs_{2}, s_{2}\mapsto c^{-1}s_{1}\mid c\neq 0\}.
\end{align} 

From the easy calculation of the tangent space to 
the orbit $GL(2)\tilde{p}$, 
we can take ${\rm stab}(\tilde{p})$-invariant 
\'etale slice at $\tilde{p}$ 
as 
$$\mathcal{S}(\tilde{p}):=
\tilde{p}+\{(\oplus_{0\le i\le 8, i\neq 3,4,5})k\cdot s_{1}^{i}s_{2}^{8-i}, 
\oplus_{0\le j\le 12}k\cdot s_{1}^{i}s_{2}^{12-i}\}\subset 
\mathbb{A}^{22}_{g_{8},g_{12}}. 
$$

Here we apply the Luna slice theorem \cite{Luna, Drezet} again to see the 
local structure around $M_{W}^{\rm seg}$. From above description 
of the slice $\mathcal{S}(\tilde{p})$, it is locally 

\begin{align}
(\mathcal{S}(\tilde{p})//(\mathbb{G}_{m}(K) \times \mu_2(K)) \equiv 
((\A_{K}^{18}// \mathbb{G}_{m}(K))/\mu_2(K)) \times K.
\end{align}
The weights for the $\mathbb{G}_{m}(K)$-action on $\A_{K}^{18}$ are twice the following 
\begin{align}
-4,-3,-2,2,3,4,
\end{align}
\noindent
as which correponds to the coefficients of 
$g_{8}$, further followed by 
\begin{align}
-6,-5,-4,-3,-2,-1,1,2,3,4,5,6
\end{align}
\noindent
as which correspond to the coefficients of 
$g_{12}$. 
Recall that in general, affine toric variety is characterized as 
GIT quotient of 
affine space by a linear action of some algebraic torus \cite[\S2]{Cox}. 
By applying it to our situation conversely, it follows that 
$\A_{K}^{18}//\mathbb{G}_{m}(K)$ is isomorphic to \footnote{this 
isomorphism is 
also easy to see directly, in this special case since the weights 
of the ${\rm stab}(p)(\simeq \mathbb{G}_{m}(K))$-action 
involve $1$ and the acting algebraic torus is one dimensional.}
the $17$-dimensional affine 
toric variety $U_{\sigma}$ corresponding to 
$\mathcal{S}_{\sigma}=\sigma^{\vee}\cap M$ 
defined as follows: 

\subsubsection*{Cone description}
if we consider $w\colon \R_{\ge 0}^{18}\to \R$ the inner product with the above 
vector 
$(-4,-3,-2,2,3,4,-6,-5,-4,-3,-2,-1,1,2,3,4,5,6)$, then 
for $\mathcal{S}_{\sigma}:=\Z^{18}\cap w^{-1}(0)$ and 
$\sigma:=\mathcal{S}_{\sigma}^{\vee}$ in the dual vector space 
$(\R^{18})^{\vee}$, 
above GIT quotient corresponds to this $\sigma\subset N\otimes \R$. 

\vspace{2mm}
It is easy to see this is nothing but the affine cone of 
self product of 
weighted projective space  
\begin{align}\label{exp.myst1}
\PP^{8}(1,2,2,3,3,4,4,5,6)\times \PP^{8}(1,2,2,3,3,4,4,5,6)
\end{align}
with respect to the $(\mathbb{Q}$-)line bundle $\mathcal{O}(1,1)$. 
Therefore, germ at any point in $M_{W}^{seg}$ in $\bar{M_{W}}$ 
is isomorphic to the product of smooth curve with 
the affine cone of ${\rm Sym}^{2}(\PP^{8}(1,2,2,3,3,4,4,5,6))$ 
with respect to the descend of $\mathcal{O}(1,1)$. 

Hence, if we blow up $M_{W}^{seg}$ with the descent of the vertex, 
we get 
\begin{align}\label{exp.myst2}
{\rm Sym}^{2}(\PP^{8}(1,2,2,3,3,4,4,5,6))
\end{align} as fibers over any point 
at $M_{W}^{seg}$. We suspect this corresponds to the variation of 
two rational elliptic surfaces.


\begin{Rem}
Looijenga 
\cite{Looi76} (cf., also 
Friedman-Morgan-Witten 
\cite[p.681-682]{FMW}) proves 
the following by use of the 
Weyl formula for affine root systems (Macdonald). 
We wonder if one can explain somewhat mysterious 
coincidence of the 
appeared exponents and those in \eqref{exp.myst1} 
and \eqref{exp.myst2}, 
in a more systematic manner. 

\begin{Thm}[{\cite{Looi76}, \cite{BS78}, cf., also Pinkham \cite{Pin77}, \cite{FMW}}]\label{roots.blowup}
For each elliptic curve $E$, and root lattice $Q$ and its dual root lattice 
$Q^{\vee{}}$, 
$(E\otimes Q^{\vee{}})/W(Q)$ is isomorphic to the weighted projective space of 
dimension ${\rm rk}(Q)$. The weights are e.g. 
\begin{align}
\mathbb{P}(\underbrace{1,1,1,1}_{4},\underbrace{2,2,2,2,2,2,2,2,2,2,2,2,2}_{l-3})
\end{align} for $D_{l}$ 
\begin{align}
\mathbb{P}(1,2,2,3,3,4,4,5,6)
\end{align} for $E_{8}$. 
Note that if $Q$ is of $A, D, E, F, G$ type, 
then $Q=Q^{\vee{}}$ by their self-duality. 
\end{Thm}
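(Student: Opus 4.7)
The plan is to realize $(E \otimes Q^{\vee})/W(Q)$ as $\Proj$ of an explicit graded ring of $W(Q)$-invariant theta functions on the abelian variety $A := E \otimes_{\Z} Q^{\vee}$ of dimension $l := \text{rk}(Q)$, and identify this ring with a graded polynomial ring whose weights are the marks of the extended Dynkin diagram. The positive-definite integral form on $Q^{\vee}$ together with the principal polarization on $E$ endows $A$ with a canonical polarization $\Theta$; the Weyl group $W(Q)$ acts on $A$ preserving $\Theta$ after choosing a linearization. We then form
\begin{align*}
R_{\bullet} := \bigoplus_{k \geq 0} H^{0}(A, \Theta^{\otimes k})^{W(Q)},
\end{align*}
so that ampleness of $\Theta$ together with finiteness of $W(Q)$ yields $A/W(Q) = \Proj(R_{\bullet})$.

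The core computation is the Hilbert series of $R_{\bullet}$. Level-$k$ theta functions on $A$ decompose canonically as combinations of Kac-Peterson theta series $\Theta_{k,\lambda}$ indexed by $\lambda$ running over $P/kQ^{\vee}$, and the $W(Q)$-invariants correspond to Weyl anti-invariants divided by the affine Weyl denominator. The Weyl-Kac character formula, equivalently the Macdonald identity for the affine root system $\widetilde{Q}$, converts this into the product formula
\begin{align*}
\sum_{k \geq 0} (\dim R_{k}) t^{k} = \prod_{i=0}^{l} \frac{1}{1 - t^{g_{i}}},
\end{align*}
where $(g_{0}, \ldots, g_{l})$ are the marks of the extended Dynkin diagram of $Q$. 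In type $D_{l}$ this multiset is $\{1,1,1,1,2,\ldots,2\}$ and in type $E_{8}$ it is $\{1,2,2,3,3,4,4,5,6\}$, matching exactly the weights asserted in the theorem. To conclude, produce $l+1$ algebraically independent homogeneous elements $\theta_{0}, \ldots, \theta_{l} \in R_{\bullet}$ with $\deg \theta_{i} = g_{i}$ -- naturally realized as characters of level-$g_{i}$ integrable highest-weight representations of the affine Lie algebra $\widehat{\mathfrak{g}}(Q)$ evaluated on the torus $A$, or equivalently as suitable symmetrizations of level-$g_{i}$ Kac-Peterson theta functions. The Hilbert series count forces $R_{\bullet} = K[\theta_{0}, \ldots, \theta_{l}]$ as a weighted polynomial ring, hence $(E \otimes Q^{\vee})/W(Q) \simeq \PP(g_{0}, \ldots, g_{l})$.

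The main obstacle is the Macdonald identity: converting an alternating sum over the affine Weyl group into an infinite product of theta functions is the non-trivial combinatorial input, and is the content of Macdonald's classical theorem. A secondary subtlety is verifying algebraic independence of the proposed generators rather than merely matching Hilbert series; the cleanest route, following Looijenga, is to specialize $E$ to a nodal cubic so that $A$ degenerates to $(\C^{\times})^{l}$ and the picture reduces to an affine analogue of Chevalley's invariant theorem for the affine Weyl group acting on a compact torus, where independence of the analogous generators is classical. For the $D$ and $E_{8}$ cases needed in the excerpt, self-duality $Q = Q^{\vee}$ holds and avoids the minor bookkeeping needed when short and long roots coexist.
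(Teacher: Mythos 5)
The paper itself gives no proof of this statement: it is quoted (inside a Remark) as a classical theorem of Looijenga and Bernstein--Shvartsman, with the explicit indication that the proof goes ``by use of the Weyl formula for affine root systems (Macdonald)'', cf.\ also the appendix of Friedman--Morgan--Witten. Your sketch --- realizing $(E\otimes Q^{\vee})/W(Q)$ as $\Proj$ of the ring of $W(Q)$-invariant theta functions on $E\otimes Q^{\vee}$, computing its Hilbert series $\prod_{i=0}^{l}(1-t^{g_{i}})^{-1}$ via the Weyl--Kac/Macdonald formula, and exhibiting free generators in degrees equal to the marks of the extended Dynkin diagram (with the independence handled by degenerating $E$ to a nodal cubic, reducing to the affine Chevalley theorem) --- is essentially that same classical route, so it is consistent with the argument the paper relies on.
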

\end{Rem}


\section{Algebro-geometric compactification after \cite{ABE} \\ 
- elementary reconstruction - }\label{ABE.sec}

\subsection{Introduction to this section}

In this section, we reconstruct and analyze one of 
the algebro-geometric compactifications 
of $M_{W}$ recently studied in \cite[especially \S4C, \S7]{ABE}, 
denoted $F^{\rm rc}$ in {\it loc.cit}. 
There was also a preceding work \cite{Brun} before that, 
and there is also a closely related independent work \cite[especially \S 5 
and \S 9]{AB}. 
In this paper, we call the compactification $\overline{M_{W}}^{\rm ABE}.$ 
\cite{ABE} shows its normalization $\overline{M_{W}}^{\rm ABE,\nu}$ is a toroidal compactification, 
whose corresponding admissible rational polyhedral fan is what they 
call {\it rational curves fan} 
$\Sigma_{\rm rc}$ (\cite[\S4C]{ABE}), 
as introduced as ``$\mathcal{J}$" in \cite[Chapter 12]{Brun}, because the considered boundary on 
K3 surfaces are weighted 
sum of rational curves in the polarization, 
as in \cite{YZ,BL}. 

We briefly describe the points of our re-construction of $\overline{M_{W}}^{\rm ABE}$, especially the difference with 
\cite{ABE}. Our methods certainly overlap with the 
discussions in \cite{ABE} and even some exposition of this section 
\S \ref{ABE.sec} also parallel theirs, 
but the main point of our logic here is to 
replace some of essential parts of \cite{ABE} (especially the 
implicit/indirect 
stable reductions) 
by a simple elementary analysis of Weierstrass normal forms so that 
the construction extends even over $\mathbb{Z}[1/6]$. 
Also there is an independent nice work by \cite{AB}  
which constructs $\overline{M_{W}}^{\rm ABE}$ and described the 
boundary components in {\it loc.cit} section 9 (of version3), 
mainly from the viewpoints of the minimal model program again 
and twisted stable maps of \cite{AV}. 

In turn, our analysis mainly via Weierstrass equations helps the original differential geometric motivation shared with Y.Oshima after the paper \cite{HSZ} and 
fruitful discussions with S.Honda. Indeed, it is 
culminated in \S \ref{measure.decide} which decides 
very rich nontrivial moduli of 
all the {\it limit measures} of (further) 
Gromov-Hausdorff collapses from tropical K3 surfaces to 
an interval. For algebraic geometers, one can say that this gives a 
new invariant for type II degenerations of K3 surfaces, as a 
PL function of one real variable. 

As another virtue for algebaic perspective of the reconstruction, 
we also do not rely on 
the general theory of 
Koll\'ar-Shepherd-Barron-Alexeev moduli of semi-log-canonical models,  which 
in turn depends on the Minimal Model Program ($3$-dimensional 
relative semistable MMP in this case). 
Furthermore, from our construction, 
the presence of fibration structures on each degenerate surface come for free, 
which \cite[\S7C]{ABE} proved 
by some discussions on periods and deformation theory. 

Furthermore, our (re-)proof 
also do {\it not} logically use the tropical K3 surfaces or the 
key PL functions although we finally aim to clarify the 
meaning of those tropics appeared in \cite{ABE} and \cite{Osh}. 
We expect that this reconstruction also provides convenience for 
future study of limits of K3 metrics at {\it different rescale}. 

In this section, we first briefly review the irreducible components of 
stable degenerations introduced in \cite{ABE} (see also \cite[8.13]{AB}) and give 
alternative description to each. 


\subsection{Preparation}\label{ABE.pre.sec}

\subsubsection{Some notations}
\begin{itemize}
\item (recall) the base $\mathbb{P}^{1}$ of elliptic K3 surfaces 
in our concern, has homogeneous coordinates 
$s_{1}, s_{2}$ and $s:=s_{1}/s_{2}$. 
\item $g_{8}=\sum_{i}a_{i}s^{i}\in H^{0}(\PP_{s}^{1},\OO(8)=\OO(8[\infty]))$, 
\item $g_{12}=\sum_{i}b_{i}s^{i}\in H^{0}(\PP_{s}^{1},\OO(12)=\OO(12[\infty]))$, 
\item $\Delta_{24}=\sum_{i}d_{i}s^{i}\in H^{0}(\PP_{s}^{1},\OO(24)=\OO(24[\infty])).$ 
\item $g_{4}\in H^{0}(\PP_{s}^{1},\OO(4)=\OO(4[\infty]))$, 
\item $g_{6}=\in H^{0}(\PP_{s}^{1},\OO(6)=\OO(6[\infty]))$
\end{itemize}

\subsubsection{Degenerate surfaces over the compactified moduli by  \cite{ABE}}

We briefly recall that the degenerate surfaces over the 
boundary of $\overline{M_{W}}^{\rm ABE}.$ We explore and 
classify the prime divisors later in \S \ref{bd.div.sec}. 

First we focus on the type III degenerations parametrized on 
the normalization of $\overline{M_{W}}^{\rm ABE}$ 
i.e., the toroidal compactification 
$\overline{M_{W}}^{\rm toroidal,\Sigma_{\rm rc}}$ 
with respect to 
the rational curves cone 
$\Sigma_{\rm rc}$ 
(\cite[\S 12]{Brun}, \cite[\S 4C]{ABE}), which first 
parametrizes special Kulikov degenerations 
up to the flops of the ``Kulikov type'' either: 

\[
  \begin{cases}
   XI\cdots IX \\
   XI\cdots IY \\
   YI\cdots IY. 
  \end{cases}
\]
Each symbol refers to a irreducible components, but they are not all the 
components. 
We omitted the subindices (called ``charge'' as invariant of 
the integral affine singularities, in \cite{AET,ABE}), whose sum is $24$. 
When we pass to the ultimate 
KSBA degeneration, then many of the components 
are contracted so that we get a surface of the ``stable type'': 

\[
  \begin{cases}
   \D\A\cdots \A\D \\
   \D\A\cdots \A\E \\
   \E\A\cdots \A\E,
  \end{cases}
\]
respectively, as 
$X$ turns to $\E$ with subindex $3$ less, $Y_{2}Y_{d+2}$ turns to $\D$ with 
 (total) subindex $4$ less, 
 and $I$ turns to  $\A$ with subindex $1$ less during this 
contraction process. These $\A, \D, \E$ corresponds to the 
root lattices of the same symbols. 

From here, we recall some of the surface components including 
Type II case, and 
give some different elementary descriptions for our purpose of the 
reconstruction of $\overline{M_{W}}^{\rm ABE}$. 

\subsubsection{$\mathbb{A}$-type surface}\label{A.surf}

About the $\mathbb{A}$-type surface (\cite[\S7G]{ABE}), 
we have nothing new to add to \cite[\S7G]{ABE} so we simply recall it 
for readers' convenience. 
For the nodal rational curve $C$ i.e., 
the rational curve with only one singularity which is the node, 
consider $C\times \PP^{1}\to \PP^{1}$ 
with marked 
$k$ fibers over the points which are neither over 
$0$ nor $\infty$. 
The normalization is $\PP^{1}\times \PP^{1}\to \PP^{1}$. 



\subsubsection{$\mathbb{D}$-type surface}\label{D.surf}
For any square-free quadric polynomial $P_{2}$ of $s$, 
regarded as 
an element of 
$H^{0}(\mathbb{P}_{s}^{1},\mathcal{O}(2)=\mathcal{O}(2[\infty])$, 
the fibers of 
\begin{align}\label{D.surf.eqn}
X^{W}_{3P_{2}^{2},P_{2}^{3}}
:&=[y^{2}z=4x^{3}-3P_{2}^{2}xz^{2}
+P_{2}^{3}z^{3}]\to \mathbb{P}_{s}^{1} \\ 
 &=[y^{2}z=(2x-P_{2}z)^2(x+P_{2}z)]\\ 
&\subset  
\mathbb{P}_{\mathbb{P}_{s}}(\mathcal{O}_{\mathbb{P}
^{1}}(2)\oplus \mathcal{O}_{\mathbb{P}^{1}}(3)\oplus \mathcal{O}
_{\mathbb{P}^{1}}),
\end{align}
as fibration over $\mathbb{P}_{s}^{1}$, 
are generically (irreducible) nodal rational curves, 
with at most $2$ cuspidal rational curves over the roots of $P_{2}$. 

The normalization of this surface is the $\mathbb{P}^{1}$-
fiber bundle with fiber coordinates $[y:x-P_{2}z]$, 
which is $\PP_{\PP_{s}^{1}}(\OO_{\PP^1}\oplus \OO(1))$, 
the Hirzebruch surface $\F_{1}$. So, as  
$P_{2}$ is square-free, 
the surface coincides with 
the underlying fibered surface of $\D_{k}$-surface ($k$ 
only makes difference of the boundary divisors) which
\cite[\S 7G]{ABE} writes. 
The fiberwise ordinary cusps 
are simply pinch points as \cite{ABE}. 


\subsubsection{$\mathbb{E}$-type and $\tilde{\E}$-type surface}\label{E.surf}

For general $g_{4},g_{6}$, 
\begin{align}
X^{W}_{g_{4},g_{6}}:=[y^{2}z=4x^{3}-g_{4}
xz^{2}+g_{6}z^{3}]
\subset 
\mathbb{P}_{\mathbb{P}_{s}}(\mathcal{O}_{\mathbb{P}
^{1}}(2)\oplus \mathcal{O}_{\mathbb{P}^{1}}(3)\oplus \mathcal{O}
_{\mathbb{P}^{1}})
\end{align} 
becomes a rational elliptic surface 
with only ADE singularities. (cf., \cite{Mir,Kas}). 
We specify the $I_{9-k}$ Kodaira type fiber (\cite{Kod}) as the boundary, 
then we call this type of log surface $\E_{k}$ ($k=1$ has two types). 
If $k$ reaches $9$, we rather denote $\tilde{E}_{9}$ 
which is nothing but the rational elliptic surface minus a 
smooth elliptic curve fiber. 

Here, we allude to the fact that 
this $\E_{k} (k\le 8)$ surface (resp., $\tilde{E}_{9}$) 
is exactly the Landau-Ginzburg model for Del Pezzo surfaces 
(resp., rational elliptic surface) 
in the context of mirror symmetry as \cite{AKO} showed the homological 
mirror symmetry type statement.  Furthermore, the associated lattices coincides with those of Del Pezzo surfaces (\cite[Chapter IV, \S 25]{Manin}). 
See \cite{CJL} for related work.


\subsubsection{$\tilde{\mathbb{D}}$-type surface}\label{D.tilde}

We discuss $\tilde{\mathbb{D}}_{16}$-type surface similarly to 
above \S\ref{D.surf}. 
For a square-free quartic polynomial $G_{4}\in 
H^{0}(\mathbb{P}_{s}^{1},\mathcal{O}(4[\infty])$, we consider as in \cite[\S7]{OO18}
the explicit surface 
\begin{align}\label{D.tilde.eqn}
X^{W}_{3G_{4}^{2},G_{4}^{3}}
:&=[y^{2}z=4x^{3}-3G_{4}^{2}xz^{2}
+G_{4}^{3}z^{3}]\to \mathbb{P}_{s}^{1} \\ 
 &=[y^{2}z=(2x-G_{4}z)^2(x+G_{4}z)]\\ 
&\subset  
\mathbb{P}_{\mathbb{P}_{s}}(\mathcal{O}_{\mathbb{P}
^{1}}(2)\oplus \mathcal{O}_{\mathbb{P}^{1}}(3)\oplus \mathcal{O}
_{\mathbb{P}^{1}}). 
\end{align}
This is a generically nodal curve fibration, 
with exactly $4$ cuspidal rational curves degenerations over the roots of $G_{4}$ 
(see \cite[\S7.1.1 and \S7.1.3]{OO18} for details). 
The normalization of this surface is the $\mathbb{P}^{1}$-
fiber bundle with fiber coordinates $[y:x-G_{4}z]$, 
which is $\PP_{\PP_{s}^{1}}(\OO_{\PP^1}\oplus \OO_{\PP^1}(2))$, 
the Hirzebruch surface of degree $2$, i.e.,  $\F_{2}$. 

We remark here that the log KSBA surface parametrized along the 
same strata as \cite[\S 7F]{ABE} consists of $18$ components and 
the middle ruled components are all 
not open K-polystable in the sense of \cite{ops}, 
unless the $16$ $\mathbb{P}^{1}$s on the top components 
all the way flopped down to the bottom components 
so that all the middle components become trivial $\mathbb{P}^{1}$-bundle 
over the elliptic curve.


\subsubsection{Mutations of $Y$-surfaces}
Recall from \cite{ABE} 
that two type of parts of Kulikov degenerations $(Y_{2})Y_{2}(I_{a}\cdots)$ and 
$(Y_{2})Y'_{2}(I_{a}\cdots)$, modulo corner blowups, parametrized at the toroidal 
compactification $\overline{M_{W}}^{\rm toroidal,\Sigma_{\rm rc}}$ are 
{\it not} distinguished once we contract them to the KSBA models (they become 
$\D_{0}\A_{a-1}\cdots$ type) 
parametrized at $\overline{M_{W}}^{\rm ABE}$. This is the main reason of 
non-normality of $\overline{M_{W}}^{\rm ABE}$, as explained in 
\cite[\S7I]{ABE}. 

Here, we reinterpret this by {\it elementarily} (by explicit 
equations) 
construct a one parameter family 
of fibered surfaces 
at one parameter family level, hence total space $3$-dimensional
$\tilde{\pi}$ at \eqref{28.} soon 
by using only pure algebraic geometry of algebraic surfaces and simple birational 
geometry. At one end of the one parameter family, we have 
$Y_{2}(I_{a}\cdots)$ surface while the other end 
we see degeneration to $Y'_{2}(I_{a}\cdots)$. 
The generic fiber is $Y_{3}(I_{a}\cdots)$. 
This is the transition we should observe at the {\rm outer} (and left) 
part of the \cite[\S7]{ABE} type Kulikov degeneration. 

For that purpose, recall the Hirzebruch surface $\F_{1}$ and $\F_{0}=\PP^{1}
\times \PP^{1}$, $\PP^{1}$-bundles 
over the common base $\PP^{1}$ are elementary transforms of each other. 
Therefore, there is a common non-corner blow up which we write as 
$\varphi_{S}\colon S\to \mathbb{P}^{1}$ (this corresponds to $Y_{3}$ in \cite{ABE}) 
and we denote their centers in $\F_{i}$ 
are $p_{i} (i=0,1)$. We denote the projections 
as $\pi_{i}\colon \F_{i}\to \PP^{1}$ which satisfies 
$\pi_{0}\circ \varphi_{0}=\pi_{1}\circ \varphi_{1}$. 

In general, if we take a general conic in $\PP^{2}$ 
and its strict transform $D_{1}$ in $S, \F_{i} (i=0,1)$, 
then the projection to $\PP^{1}$ has 
two ramifying points as \cite[\S 7B]{ABE} write. 
It is easy to see that 
after the automorphism, we can and do assume that $p_{i}\in \F_{i}$ 
is one of two points $D_{1}\cap \pi_{S}^{-1}(\infty)$ for both $i$. 

Here we use the construction of \cite[\S 3.1]{Ohno}, 
which originally aimed to partially establish the 
CM degree minimization conjecture (cf., \cite{Ohno, Od.survey}) 
in the context of K-stability, 
by the author. 
One main point is we consider extra direction by introducing 
$\A^{1}_{t}$. We consider the blow up of 
$\PP^{1}\times \A^{1}_{t}$ at $(\infty,(t=)0)$ (resp., $(\infty,1)$), 
which we denote by 
\begin{align}
\beta_{i}\colon B_{i}\to \PP^{1}\times \A^{1}.
\end{align}
Then take the fibre product with 
\begin{align}
\Pi_{i}=(\pi_{i}\times {\it id})
\colon \F_{i}\times \A^{1}\twoheadrightarrow 
\PP^{1}\times \A^{1},
\end{align} for $i=0$ (resp., $i=1$) and 
further blow up the total space along a smooth closed curve 
$\overline{(\{\infty\}\times (\A^{1}\setminus \{i\}))}(\simeq \A_{K}^{1})$. 
Then we obtain\footnote{the author also used this construction 
in a joint work with R.Thomas on K-stability in 2013.}
\begin{align}
\tilde{\Pi_{i}}\colon \mathcal{F}_{i}\to {\rm Bl}_{(\infty,i)}
(\PP^{1}\times \A^{1}).
\end{align} We can glue these two for $i=0,1$, 
since the blow ups of $\F_{0}$ at $p_{0}$ and $\F_{1}$ at $p_{1}$ coincides, 
and obtain 
\begin{align}
&\hspace{1cm}\tilde{\Pi}&\colon &\mathcal{F} &\to Bl_{(\infty,0)\cup (\infty,1)}(\PP^{1}\times 
\A^{1}),\\
\label{28.}&\tilde{\pi}:= {\rm pr}\circ\tilde{\Pi}&\colon &\mathcal{F} &\to Bl_{(\infty,0)\cup (\infty,1)}(\PP^{1}\times 
\A^{1})\to \A^{1}. 
\end{align}
We denote the fiber over $t$ by $\bar{F}_{t}$. Then, $\mathcal{F}_{t}$ is 
\begin{align}
\label{4}
(\F_{0}\to \PP^{1}) & \cup (S\to \PP^{1}) &=Y_{2}'I_{a} & \text{ for } t=0 \\ 
\label{5}
S &\to \PP^{1} &=Y_{3}  &  \text{  for } t\neq 0,1 \\ 
\label{6}
(\F_{1}\to \PP^{1}) & \cup (S\to \PP^{1}) &=Y_{2}I_{a} & \text{ for } t=1. 
\end{align}
This interesting 
family $\{\mathcal{F}_{t}\}_{t}$ with two different degenerations at $t=0$ and 
$t=1$ 
exactly describes the switch between $Y_{2}Y_{2}$ and $Y_{2}Y_{2}'$ 
in the context of \cite{ABE}. 
Recall from \cite{ABE} (also see \cite{Osh}) 
that the corresponding PL functions to each of \eqref{4}, \eqref{5}, \eqref{6} 
starts with slope $8, 7, 8$ respectively. 

\subsubsection{Slight extension of ADE lattices}

In \cite{ABE}, over $K=\C$, they used the periods and corresponding 
Torelli theorems for components of the degeneration of 
elliptic K3 surfaces after \cite{GHK, Fri.recent}. 

The convention of denoting each components 
by $\A, \D, \E$ comes from it but 
for such description, they indirectly used the 
following slight extension of the usual ADE lattices; 
allows $D_{i}$ for $i=1,2,3$ and also $E_{i}$ for $i=1,2,3,4,5.$
We logically do not need it until \S\ref{Alg.limits.sec} but 
for the convenience of readers, we clarify here. 

The lattice $D_i$ for $i<4$ 
is constructed in the same way as those with $i\ge 4$. Simply, 
$$D_i:=\{ (x_1,\cdots,x_i)\in \Z^i \mid
\sum_j x_j \in 2\Z \}.$$
In our context, with respect to the fundamental domain, these D type lattices are naturally realized
in $\Lambda_{\rm seg}$ as
\begin{itemize}
\item 
$\langle \alpha_1-\alpha_3 \rangle$ for $i=1$,
\item 
$\langle \alpha_1,\alpha_3 \rangle$ for $i=2$,
\item 
$\langle \alpha_1,\alpha_3 ,\alpha_4 \rangle$ for $i=3$.
\end{itemize}

\vspace{3mm}
On the other hand, the following 
inductive construction of $E_i$ (from $i=1$) is 
essentially due to Manin \cite{Manin}. 

We construct a little extended lattice $E'_i$ 
for $i=1,2,\cdots$ with  $E_i\subset E'_{i}$ which has corank $1$ and orthogonal to $K_i$.
(Geometrically it is fairly simple i.e.,
$E'_{i}=H^2(S_{9-i},\Z)$ where $S_d$ stands for Del Pezzo surface of degree $d$ and
$c_1(S_d)^\perp = E_i$. )
Here is more elementary construction (through ``blow up”):
\begin{align*}
&E'_{i}&:= \Z l &(l^2=1), & -K_1=3l. \\ 
&E'_{i+1}&:=E'_{i}\oplus \Z e_i & (e_i^2=-1) & K_{i+1}=K_i+e_i.\\
\end{align*}

Allowing above type $D$ lattices and $E$ lattices with lower indices, we call these such A,D,E lattices and their direct sum as 
{\it slightly generalized root lattice}. 
See \cite[\S 1]{LO} for related discussions. 


\subsection{Re-construction of \cite{ABE}}\label{ABE.reconst.sec}

In our logic for the re-construction of the compactification of 
\cite{ABE}, first we readily construct the desired moduli stack $\overline{\mathcal{M}_{W}}^{\rm ABE}$ 
and then, we show the desired properties 
especially the properness 
as well as the presence of projective coarse moduli spaces 
$\overline{M_{W}}^{\rm ABE}$ ($F^{\rm rc}$ in \cite{ABE}) later. 

Our discussion uses the degenerations of the elliptic K3 surfaces parametrized 
by $\overline{M_{W}^{\rm ABE}}$ {\it simply as a set(!)} and denote them by 
$(X,R) \in \overline{M_{W}^{\rm ABE}}$. 
First we {\it fix} large enough positive integers $m$ and $d$ so that 
for any $(X,R=s+m\sum f_{i})\in \overline{M_{W}^{\rm ABE}}$, $R$ is 
ample and $dR$ is very ample without high cohomology. Obviously, 
 $\chi(X,\mathcal{O}_{X}(dR))$ does not depend on 
$(X,R)$s. Then we take the corresponding Hilbert scheme $H'$. 
Naturally, $G:={\rm SL}(H^{0}(X,dR))$ acts on $H$. 

We take a subset $H$ of $H'$ parametrizing the surfaces $X$ parametrized by $\overline{M_{W}}^{\rm ABE}$ embedded by $dR$. 
Since the subset is characterized as those $\mathcal{O}_{\mathbb{P}}(1)|_{X}
=\mathcal{O}_{X}(dR)$ (closed condition) as well as 
the reduced semi-log-canonical-Gorenstein 
properties of $X$ (open condition), $H$ is a locally closed subset of $H'$. 

Then we put reduced scheme structure on $H$ and 
set \begin{align}
\overline{\mathcal{M}_{W}}^{\rm ABE}:=[H/G], 
\end{align}
the quotient (a priori only Artin) stack. 
Now we prove this is actually a proper Deligne-Mumford stack (i.e., 
stable reduction type statements) case by case, so that we reprove the following 
in an elementary way. (Of course, we do not mean to be short arguments, by the word 
``elementary''.) 

\begin{Thm}[cf., \cite{ABE}]\label{ABE.moduli}
The moduli algebraic stacks (constructed above) 
$\mathcal{M}_{W}\subset \overline{\mathcal{M}_{W}}^{\rm ABE}$ of 
elliptic K3 surfaces and their degenerations over ${\rm Spec}(\Z[1/6])$, 
(the former is an open substack of the latter) 
both admit 
the coarse moduli varieties $M_{W}\subset \overline{M_{W}}^{\rm ABE}$ 
(the former is an open subvariety of the latter) such that 
$\overline{M_{W}}^{\rm ABE}$ is projective. 
\end{Thm}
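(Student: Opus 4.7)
The plan is to verify, in order, the Deligne--Mumford property, the valuative criteria (separatedness and properness) via elementary Weierstrass-based stable reduction, and finally the projectivity of the coarse moduli. The upshot is that almost everything is moduli-theoretically formal once the stable reduction is carried out by direct manipulation of the pair $(g_{8},g_{12})$; this reduction, especially in the type II cases, is the main technical step.

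For the Deligne--Mumford property, note that each $(X,R)$ parametrized by $\overline{M_{W}}^{\rm ABE}$ is semi-log-canonical Gorenstein with $R$ ample, and an explicit inspection of the components of \S\ref{A.surf}--\S\ref{D.tilde} shows that $(K_{X}+R)$ is ample. Consequently the automorphism scheme of each $(X,R)$ is finite (and reduced over $\Spec(\Z[1/6])$ by the characteristic assumption, which is precisely why $\mu_{2},\mu_{3}$ are \'etale), so $[H/G]$ is DM. Separatedness reduces to uniqueness of the slc model with ample polarization $R$ for a given generic fiber; in the elliptic setting this uniqueness follows directly from the uniqueness of a minimal Weierstrass form of $\mathcal{X}_{\eta}$ together with the usual $\GL(2)$-normalization of $(g_{8},g_{12})$.

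The hard part is properness, i.e., the elementary stable reduction. Given a family $\pi^{\circ}\colon (\mathcal{X}^{\circ},\mathcal{L}^{\circ})\to \Spec(R)\setminus\{0\}$ over a DVR with uniformizer $t$, spread it out to Weierstrass data $g_{8}(s,t),g_{12}(s,t)$ and, after a finite base change $t\mapsto t^{n}$, normalize to a minimal form (neither $t^{4}|g_{8}$ nor $t^{6}|g_{12}$ simultaneously) using the $\GL(2)$-action on $(s_{1},s_{2})$ and the $\mathbb{G}_{m}$-scaling. The valuative limit is then read off the orders of vanishing of $g_{8},g_{12},\Delta_{24}$ along the divisor $\{t=0\}$ on $\PP^{1}_{s}\times \Spec(R)$: if $g_{8}|_{t=0}, g_{12}|_{t=0}$ still present an elliptic K3 then no degeneration is needed; if instead $(g_{8}|_{t=0},g_{12}|_{t=0})$ is forced into the pattern $(3P^{2},P^{3})$ as in \eqref{D.tilde.eqn} (resp.\ into a form reducing to the rational-elliptic case of \S\ref{E.surf}), we recognize the central fiber as a $\tilde{\D}$-type or $\D$-type (resp.\ $\tilde{\E}$- or $\E$-type) component, and the chains $\D\A\cdots\A\D$, $\D\A\cdots\A\E$, $\E\A\cdots\A\E$ arise exactly according to how the irreducible factors of the limiting discriminant $\Delta_{24}(s,0)$ collide at $s=0$ and $s=\infty$. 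The subtle point, which I expect to be the main obstacle, is the $Y_{2}Y_{2}$ versus $Y_{2}Y_{2}'$ ambiguity described in \S\ref{MWseg}: two different semistable toroidal limits can appear over the same boundary stratum, and one must check that they give the same slc-polarized limit after contracting the unstable intermediate components. This is handled exactly by the explicit $\A^{1}_{t}$-family $\tilde{\pi}\colon \mathcal{F}\to \A^{1}$ constructed in \eqref{28.}, in which \eqref{4}--\eqref{6} exhibit both degenerations as specializations of the same $Y_{3}$-family whose common slc contraction is a single point of $\overline{M_{W}}^{\rm ABE}$; consequently the valuative criterion holds on the KSBA quotient (this is the reason for the non-normality of $\overline{M_{W}}^{\rm ABE}$ rather than an obstruction to properness).

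For the coarse moduli and its projectivity, $H$ is a locally closed subscheme of the Hilbert scheme of $\PP^{N}$ (with $N=\chi(X,\mathcal{O}_{X}(dR))-1$) and the $G=\SL(N+1)$-action has finite stabilizers by the DM property verified above. A standard GIT/Keel--Mori argument then produces a coarse algebraic space $\overline{M_{W}}^{\rm ABE}$. Projectivity follows by descending a suitable power of the CM polarization attached to $\mathcal{O}_{\PP^{N}}(1)|_{X}=\mathcal{O}_{X}(dR)$: since $K_{X}\equiv 0$ and $R$ is ample, this is essentially the polarization $K_{X}+R$, for which Koll\'ar-type ampleness/semi-positivity for moduli of slc polarized pairs yields an ample line bundle on the coarse space. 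Combining this with the properness established above gives projectivity of $\overline{M_{W}}^{\rm ABE}$, proving the theorem.
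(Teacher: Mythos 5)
Your overall architecture matches the paper's: finiteness of automorphisms of the polarized slc pairs gives the Deligne--Mumford property, Keel--Mori gives a coarse algebraic space, projectivity comes from semipositivity/ampleness of determinants of pluri-log-canonical pushforwards (the paper cites Kov\'acs--Patakfalvi and Fujino), and everything then hinges on an explicit valuative-criterion (stable reduction) statement proved by manipulating $(g_{8},g_{12})$. Uniqueness of the limit is also handled as you indicate, though your justification via ``uniqueness of a minimal Weierstrass form of $\mathcal{X}_{\eta}$'' does not really apply, since the limiting objects are reducible non-Weierstrass slc pairs; the correct (and the paper's) justification is the general uniqueness of the relative log canonical model.

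The genuine gap is in the properness step itself. Your recipe --- normalize $(g_{8},g_{12})$ by $\GL(2)$ and $\mathbb{G}_{m}$-scaling, restrict to $t=0$ on the trivial model $\PP^{1}_{s}\times\Spec(R)$, and ``recognize'' the central fiber --- can only ever produce an irreducible central fiber over the fixed $\PP^{1}_{s}$. But for type III limits and for most boundary strata the required limit in $\overline{\mathcal{M}_{W}}^{\rm ABE}(K[[t]])$ is a chain $\E\A\cdots\A\E$, $\D\A\cdots\A\D$, etc.\ fibered over a \emph{chain of rational curves}, and on the trivial model the naive limit (e.g.\ $(3s^{4},s^{6})$) is not an slc stable pair at all. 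The heart of the paper's proof is precisely the construction you omit: after a finite base change making the roots $\xi_i,\eta_i,\chi_i$ Laurent, one performs elementary transforms and blow-ups of $\mathcal{B}_{\rm triv}$ dictated by the valuations $e(0),e(\infty)$ of the extreme roots and by a (modified) Newton polygon of $\Delta_{24}$, so that the real discriminant points distribute onto distinct components of $\mathcal{B}|_{t=0}$; one then degenerates the ambient $\PP^{2}$-bundle accordingly, takes the closure of $X$, contracts components carrying no real discriminant, and makes further weighted blow-ups when roots of the would-be $P_{2}$ collide (the ``almost $\D$ type end'' step). Saying that the chains ``arise according to how the factors of $\Delta_{24}(s,0)$ collide'' gestures at this but does not produce the family over $K[[t]]$ that the valuative criterion demands; moreover the type II-to-type III and componentwise further-degeneration cases each need their own such construction. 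Finally, you misplace the main difficulty: the $Y_{2}Y_{2}$ versus $Y_{2}Y_{2}'$ mutation family \eqref{28.} is used in the paper to explain the non-normality of $\overline{M_{W}}^{\rm ABE}$, not as an ingredient in verifying the valuative criterion, which is instead carried by the explicit base-modification arguments just described (the Kulikov-model ambiguity is killed by uniqueness of the log canonical model).
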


\begin{proof}[Elementary direct reproof]

The existence of coarse moduli spaces as algebraic spaces follows from 
\cite{KeelMori}, 
since the inertia groups of the moduli stack are nothing but 
the automorphism of log canonical model $(X,\epsilon R)$ which is finite cf., 
\cite[Chapter 11]{Iit82}, \cite[Proposition 4.6]{Ambro}). 
The projectivity follows from the ampleness of the 
determinant of direct image sheaves of pluri-log-canonical bundles 
\cite{Kov},\cite{Fjn}. 

Therefore, to reprove Theorem~\ref{ABE.moduli}, 
it remains to show the following key claim from the valuative criterion of properness {\it relative to} ${\rm Spec}(\Z)[1/6]$ (e.g., \cite[\S7]{LM}). In particular, the uniqueness part 
shows that the reconstructed compactification in this section 
and \cite{ABE} are identical. 

\begin{Thm}[stable reduction cf., \cite{ABE}]\label{ABE.stable.reduction}

For any field $K$ of characteristic different from $2$ and $3$, and 
any $(X,R)\to \mathbb{P}^{1}_{s}$ parametrized in $\overline{\mathcal{M}_{W}}^{\rm ABE}
(K((t)))$, 
$(X,R)\to \mathbb{P}^{1}_{s}$
has a unique (explicit)
model $(\mathcal{X},\mathcal{R})\to \mathcal{B}$ over $K[[t]]$ 
in $\overline{\mathcal{M}_{W}}^{\rm ABE}
(K[[t]]).$  
\end{Thm}

We fix further notations before giving the details of the proof. 

\subsubsection*{Some further notations}
\begin{itemize}

\item $K$ denotes the field we take in Theorem~\ref{ABE.stable.reduction}, 
whose characteristic is coprime to $6$. 
Recall that we use $s$ for the 
corresponding coordinate, virtually valued in $K$.  

\item Since we only wish to prove properness of the above quotient algebraic stack, 
we can and do assume the field $K$ is actually algebraically closed, just for 
simpler exposition. 

\item We denote the obvious trivial model 
$\mathbb{P}^{1}_{s}\times {\rm Spec}(K[[t]])$
of 
$\mathbb{P}^{1}_{s}\times {\rm Spec}(K((t)))$ 
as $\mathcal{B}_{\rm triv}$. We make birational transforms of this 
$\B_{\rm triv}$ to other model $\B$. 

\item Discriminant locus of $[(X,R)\to \PP_{s}^{1}]
\in \mathcal{M}_{W}(K((t)))$ as $D\subset \B$. The fibers over 
its reduction 
$\overline{D}\cap (t=0)\subset \B$ are called 
{\it really singular} in \cite{ABE} which we continue to use. 
We call their underlying closed points in the base as {\it real discriminant (points)}. 
\end{itemize}

\begin{proof}[proof of Theorem~\ref{ABE.stable.reduction}]

The uniqueness part follows from the general uniqueness 
of relative log canonical model (i.e., which reduces to the 
independence of log canonical ring on any log smooth birational models 
cf., \cite{KM} for details) but also follows from the 
explicit analysis below. 

Hence, we focus on the explicit construction of 
the desired stable reduction to each punctured families lying 
on $\mathcal{M}_{W}$. 
By lifting to $\mathbb{A}^{22}$, 
reduce to the following four cases: Case \ref{Case1} 
to Case \ref{Case4}. 

\begin{Case}[Type III degenerations from $M_{W}$]\label{Case1}

This case amounts to show the following claim: 

\begin{Claim}[Maximally degenerating stable reduction]
\label{Max.degen.stable.reduction}

Given any $g_{8}(s)$ in $\Gamma(H^{0}(\mathbb{P}^{1}_{s},\mathcal{O}(8)))
\otimes K[[t]]$ 
(resp., $g_{12}(s)$ in $\Gamma(H^{0}(\mathbb{P}^{1}_{s},\mathcal{O}(12)))
\otimes K[[t]])$ such that 

\begin{align}
X^{W}_{g_{8},g_{12}}|_{t\neq 0}:&=[y^{2}z=4x^{3}-g_{8}(t)xz^{2}+g_{12}(t)z^{3}]\\ 
&\subset 
\mathbb{P}_{\mathbb{P}_{s}^{1}}(\mathcal{O}_{\mathbb{P}^{1}}(4)\oplus \mathcal{O}_{\mathbb{P}^{1}}(6)\oplus \mathcal{O}_{\mathbb{P}^{1}}),
\end{align}
as in \cite[\S7.1]{OO18} 
is an elliptic K3 surfaces parametrized in $M_{W}(K((t)))$ i.e., 
only with ADE singularities and 
$g_{8}|_{t=0}=3s^{4}, g_{12}|_{t=0}=s^{6}$ (i.e., 
converging to $M_{W}^{nn, seg}$ in the Satake-Baily-Borel compactification 
(cf., \cite[\S7]{OO18}), 
the corresponding $X\to \mathbb{P}^{1}_{s}$ (resp., 
$\mathbb{P}^{1}_{s}\times {\rm Spec}(K((t)))$) 
over $K((t))$ 
has another model 
$\mathcal{X}$ (resp., connected proper scheme $\mathcal{B}$ 
of relative dimension $1$) over $K[[t]]$ so that 
$\mathcal{X}|_{t=0}\to \mathcal{B}|_{t=0}$ is (the only possible) 
one of those parametrized in $\overline{M_{W}}^{\rm ABE}$. 
\end{Claim}


\begin{Stp}[End surfaces]\label{End.surfaces}
To prove the above Claim~\ref{Max.degen.stable.reduction}, 
first we take finitely ramified base change from $K[[t]]$ to 
$K[[t^{1/d}]]$ for some $d\in \mathbb{Z}_{>0}$, 
so that we can and do assume the roots of 
$g_{8}, g_{12}, \Delta_{24}:=g_{8}^{3}-27g_{12}^{2}$
 are Lawrent (not only Puiseux), i.e., there are 
$\xi_{i}\in K((t)) (i=1,\cdots,8)$, 
$\eta_{i}\in K((t)) (i=1,\cdots,12)$, 
$\chi_{i}\in K((t)) (i=1,\cdots,24)$ in the descending order of the valuations $v_{t}(-)$ along coordinates $s$ with respect to $t$ 
(or additive inverse of the 
valuation of $s^{-1}$). 
Here, $s' (:=\frac{s_{2}}{s_{1}})$ 
is regarded as a local uniformizer at 
$[s_{1}:s_{2}]=[1:0]$ (``$\infty$-point'') in 
the base $\mathbb{P}^{1}_{s}$. 

We first set 
\begin{align}\label{e.def}
e(0):=\min \{{\rm val}_{t}(\xi_{1}),\cdots,{\rm val}_{t}(\xi_{4}),
{\rm val}_{t}(\eta_{1}), \cdots, {\rm val}_{t}(\eta_{6})\},
\end{align}
\begin{align}\label{e'.def}
e(\infty):=\min \{{\rm val}_{t}\biggl(\dfrac{1}{\xi_{5}}\biggr),\cdots,{\rm val}_{t}\biggl(
\dfrac{1}{\xi_{8}}\biggr), 
{\rm val}_{t}\biggl(\dfrac{1}{\eta_{7}}\biggr), \cdots, 
{\rm val}_{t}\biggl(\dfrac{1}{\eta_{12}}\biggr)\}. 
\end{align}
and after an appropriate elementary transform of the trivially extended  
$\PP^{1}$-bundle over $\PP^{1}_{s}\times_{K}K[[t]]$ 
(we fix this ambiguity below soon), 
further blow it up to 
$\mathcal{B}_{1}\to \mathcal{B}_{\rm triv}$ by 
the coherent ideal sheaf 
\begin{align}
\langle s,t^{e(0)} \rangle \cdot \langle s', t^{e(\infty)}\rangle 
\cdot \mathcal{O}_{\mathcal{B}_{\rm triv}}. 
\end{align}
Then, the special fibre of $\mathcal{B}_{1}$ over $t=0$ 
is 
\begin{align}\label{base.step1}
\mathbb{P}^{1}_{\frac{s}{t^{e(0)}}}\cup 
\mathbb{P}^{1}_{s}
\cup \mathbb{P}^{1}_{\frac{s'}{t^{e(\infty)}}}
\end{align} where the 
two ends are exceptional curves. 

Accordingly, we can naturally degenerate the 
ambient space $\mathbb{P}_{\mathbb{P}_{s}^{1}}(\mathcal{O}_{\mathbb{P}^{1}}(4)\oplus \mathcal{O}_{\mathbb{P}^{1}}(6)\oplus \mathcal{O}_{\mathbb{P}^{1}})$ over $K((t))$ to 
over $K[[t]]$ so that the special fiber over $t=0$ is 
a connected 
union of the following three irreducible components: 
\begin{enumerate}
\item \label{1st.compo} 
$\mathbb{P}_{\mathbb{P}_{\frac{s}{t^{e(0)}}}^{1}}(\mathcal{O}_{\mathbb{P}^{1}}(2)\oplus \mathcal{O}_{\mathbb{P}^{1}}(3)\oplus \mathcal{O}_{\mathbb{P}^{1}})$ over $\mathbb{P}^{1}_{\frac{s}{t^{e(0)}}}$
\item \label{2nd.compo}
trivial $\mathbb{P}^{2}$-bundle 
over $\mathbb{P}^{1}_{s}$  (i.e., 
$\mathbb{P}^{2}\times \mathbb{P}^{1}_{s}$) 
\item \label{3rd.compo}
$\mathbb{P}_{\mathbb{P}_{s}^{1}}(\mathcal{O}_{\mathbb{P}^{1}}(2)\oplus \mathcal{O}_{\mathbb{P}^{1}}(3)\oplus \mathcal{O}_{\mathbb{P}^{1}})$ over $\mathbb{P}^{1}_{\frac{s'}{t^{e(\infty)}}}$. 
\end{enumerate}

Inside the first component \eqref{1st.compo}, 
the closure of $X$ (``limit component'') appears as 
\begin{align}
X^{W}_{g_{4}^{\nu},g_{6}^{\nu}}:=[y^{2}z=4x^{3}-g_{4}
^{\nu}|_{t=0}xz^{2}+g_{6}^{\nu}|_{t=0}z^{3}],
\end{align}
where 
$g_{4}^{\nu}=c_{4}\prod_{i=1}^{4}(s-\xi_{i}),$
$g_{6}^{\nu}=c_{6}\prod_{i=1}^{6}(s-\eta_{i}),$ 
with replaced roots $\xi$s and $\eta$s. 
Recall that construction of the model $\mathcal{B}_{1}$ above 
had an ambiguity modulo elementary transform with respect to 
$t=0$ but we fix it by assuming $(c_{4},c_{6})\in K^{2}\setminus 
\vec{0}$. 
From the construction, $g_{4}^{\nu}$ and $g_{6}^{\nu}$ are 
strictly degree $4$ and $6$ respectively with coefficients 
$3$ and $1$ respectively, 
$\Delta^{\nu}_{12}:=(g_{4}^{\nu})^{3}-27(g_{6}^{\nu})^{2}$ 
has degree at most $11$. This means 
the component 
$X^{W}_{g_{4}^{\nu},g_{6}^{\nu}}$ has singular fiber over 
$\infty$, which corresponds to the fact that the 
degeneration is of type III.

Also, from the definition of $e(0)$, 
not all of $\xi_{i}$s and $\eta_{i}$s vanish. 
Similarly, in the last component \eqref{3rd.compo}, 
the closure of $X$ (``limit component'') appears as 
\begin{align}
X^{W}_{h_{4}^{\nu},h_{6}^{\nu}}:=[y^{2}z=4x^{3}-h_{4}
^{\nu}|_{t=0}xz^{2}+h_{6}^{\nu}|_{t=0}z^{3}],
\end{align}
where 
$h_{4}^{\nu}=\prod_{i=5}^{8}(s-\xi_{i}),$
$h_{6}^{\nu}=\prod_{i=7}^{12}(s-\eta_{i}),$ 
again with newly replaced roots $\xi$s and $\eta$s. 
From the construction, due to \cite[Lemma1]{Kas}, if 
Weierstrass surfaces are generically smooth, 
they automatically only have ADE singularities (at non-zero finite base 
coordinates). 

When $K=\C$, 
in comparison with our asymptotic analysis of McLean's 
real Monge-Amp\'ere metrics 
in \cite[\S7.3.3]{OO18}, 
these ``end surfaces'' are where the term 
(denominator of the second term in \cite[Lemma 7.16]{OO18}) 
\begin{align}
\log(|g_{8}|^{3}+27|g_{12}|^{2})
\end{align} becomes dominant. 
On the other hand, the following 
next step is relevant to expand the divergence of the 
$\log(|\Delta_{24}|)$ term. 

\end{Stp}

\begin{Stp}[Separating ``middle'' $\chi_{i}$s]\label{Step.middle}
Next step we consider toric model $\B$ with respect to 
some combinatorial data coming from the Newton polygon, 
as the method used classically by \cite{Mum72.AV, 
AN, Don02} as follows. 
We consider the Newton polygon 
${\rm Newt}(\Delta_{24})$ of $\Delta_{24}$ i.e., 
the convex hull of 
\begin{align}\label{d.hull}
\{(i,v_{t}(d_{i}))\mid 0\le i\le 24\}+\R_{\ge 0}(0,1).
\end{align}
We regard it as a graph of PL convex function 
$\varphi_{\Delta}\colon [0,24]\to \R\cup \{\infty\}$. 
Then we modify this as follows (this process aims at 
including the previous step when we consider the toric models): 

Set 
\begin{align}
i_{e(0)}:=\max\{i\mid \varphi_{\Delta}(i)-\varphi_{\Delta}(i+1)\ge 
e(0)\},
\end{align}
\begin{align}
i_{e(\infty)}:=\min\{i\mid \varphi_{\Delta}(i+1)-\varphi_{\Delta}(i)\ge 
e(\infty)\},
\end{align}
where $e(0)$ and $e(\infty)$ as \eqref{e.def} and \eqref{e'.def}. 
We modify $\varphi_{\Delta}$ to 
$\overline{\varphi}_{\Delta}\colon [0, 24]\to \R\cup \{\infty\}$ defined as follows: 
\begin{align}\label{PL.modif}
\bar{\varphi}_{\Delta}(i):=
\begin{cases}
\varphi_{\Delta}(i_{e(0)})-e(0)(i_{e(0)}-i) & (\text{if }0\le i\le i_{e(0)})  \\ 
\varphi_{\Delta}(i) & (\text{if }i_{e(0)}\le i\le i_{e(\infty)})   \\
\varphi_{\Delta}(i_{e(\infty)})+e(\infty)(i-i_{e(\infty)}) & (\text{if }i_{e(\infty)}\le i\le 24). 
\end{cases}
\end{align}

Then, consider the toric model (test configuration of $\PP^{1}$) 
$\B$ over $\A^{1}$ (hence also over $K[[t]]$), 
corresponding to $\bar{\varphi}_{\Delta}$, i.e., 
for 
\begin{align}\label{P.Delta}
P_{\Delta,c}:=\{(x,y)\in \R^{2}\mid 0\le x\le 24, 
-c\le y\le -\bar{\varphi}_{\Delta}(x)\}
\end{align}
for some $c\ge 0$ the moment polytope of (the natural 
compactification of) $\B$ becomes 
$P_{\Delta,c}$. 

In particular, 
the normal fan of the graph of $\bar{\varphi}_{\Delta}$ 
gives $\B$ by usual toric construction. We fix and take the natural 
$c$ such that the 
obtained $\B$ has the same end components as 
$\mathcal{B}_{1}$ in the previous Step \ref{End.surfaces}, 
i.e., the end components of $\B|_{t=0}$ 
are the bases 
$\mathbb{P}^{1}_{\frac{s}{t^{e(0)}}}$ and 
$\mathbb{P}^{1}_{\frac{s'}{t^{e(\infty)}}}$ 
of the ends at \eqref{base.step1}. 
Indeed, it is possible by our modification \eqref{PL.modif} of the 
PL function. 

Furthermore, as desired, 
every other components of $\B|_{t=0}$ has 
at least one point of $D_{0}(=\bar{D}\cap (t=0))$. 
Here, recall that $D$ denotes the discriminant locus defined after Theorem \ref{ABE.stable.reduction} whose closure is denoted as $\bar{D}$. 
This ensures the ampleness of the boundary $R$ in the corresponding irreducible components of the Weierstrass (reducible) fibred surface. 
\end{Stp}

\begin{Stp}[About end surfaces again]\label{Step.1.3}
If the end surface $X^{W}_{g_{4}^{\nu},g_{6}^{\nu}}\to \PP^{1}_{s}$ 
is generically smooth, it is nothing but a rational elliptic surface 
i.e., type $\E_{k}$ in \cite{ABE}. In that case, because of the construction, $\deg \Delta_{12}^{\nu}=12$ ($\Delta_{12}^{\nu}$ does not vanish at $\infty$) so that 
the fiber over $\frac{s}{t^{e(0)}}=\infty$ 
can not be singular. 

On the other hand, 
if the end surface $X^{W}_{g_{4}^{\nu},g_{6}^{\nu}}$ has 
singular general fibers, it means that there is 
$P_{2}\in H^{0}(\OO(2[\infty]))$ 
such that 
\begin{align}
g_{4}^{\nu}=3P_{2}^{2}, g_{6}^{\nu}=P_{2}^{3}.
\end{align}
${\rm deg}(P_{2})$ can not be less than $2$ from the construction. 
If this $P_{2}$ is square-free, then from our discussion in 
\S \ref{D.surf}, 
we get the surface $\D$ type and end the step here. 
If $P_{2}$ is {\it not} square-free, 
we continue to next step. 
\end{Stp}

\begin{Stp}[Modifying almost $\D$ type end]\label{Real.end}
Depending on formulation, this process may be included in Step\ref{End.surfaces} 
but nevertheless we separated it to make the steps clearer. 
From here, we treat the ``left end'' surfaces in the original sense of 
Step\ref{End.surfaces} i.e., those maps to $s=0$ i.e., 
defined 
by $g_{4}^{\nu}$ and $g_{6}^{\nu}$. 
(For the right end surface which maps to $s=\infty$, 
the completely similar arguments work by symmetry so we avoid repitetion of the 
details of the arguments.) 

We continue from the previous step, so 
suppose $P_{2}$ is {\it not} square free. 
Nevertheless, since our generic fiber at $t\neq 0$, 
$X_{g_{8},g_{12}}^{W}$ was originally at worst ADE, 
among those (a priori at total $10$) roots of $g_{4}^{\nu}$ or $g_{6}^{\nu}$ i.e., 
$\xi_{i} (1\le i\le 4), \eta_{j} (1\le j\le 6)$, 
at least two of them do not coincide 
as elements of $K[[t]]$ (before substitution $t=0$). 
Suppose that they are 
$\{p,q\}\subset \{\xi_{i} (1\le i\le 4), \eta_{j} (1\le j\le 6)\}$ 
with respect to the new coordinates after Step~\ref{End.surfaces}. 
Write the local uniformizer at $p(0)=q(0)$ for the component 
$\mathbb{P}^{1}_{\frac{s}{t^{e(0)}}}$, 
as $s_{p,q}$. 

We make Puiseux expansions of $p, q$ and set 
$e_{p,q}:=v_{t}(p-q)$, where $v_{t}$ denotes the 
$t$-adic (additive) valuation. 
Then do blow up of $\B$ (which was the outcome of processes until the previous 
step) along $\langle s_{p,q}, t^{e_{p,q}}\rangle 
\mathcal{O}_{\B}$ whose 
cosupport is in $\mathbb{P}^{1}_{\frac{s}{t^{e(0)}}} \times \{t=0\}$, 
and blow down the surface without $\bar{D}\cap (t=0)$ if necessary, 
we obtain the situation with squarefree $P_{2}$. 
Note that by this last step, the resulting model 
$\B$ may {\it not} be toric, while toroidal, with respect to the original 
coordinates (since $p(0)=q(0)$ may not be zero). 

\end{Stp}

\end{Case}
\vspace{4mm}

\begin{Case}[Type II degenerations]
These cases are essentially done in \cite[\S 3]{CM05} via deformation theory and 
more Hodge-theoritic viewpoint, while 
the degenerations are slightly modified in \cite{ABE} (see also 
\cite{Fri84, Kondo} including non-elliptic case). 

Here we again recover them by our 
elementary method using the Weierstrass form as below. 

\begin{Subcase}(to $\widetilde{\D_{16}}$) 
This case essentially follows from the 
GIT picture in  \cite[\S7]{OO18} by applying the GIT stable reduction. Recall that 
the Satake-Baily-Borel compactifiation $\overline{M_{W}}^{\rm SBB}$ 
coincides with the GIT compactification with respect to the 
Weierstrass expression \cite[\S 7.2.1]{OO18}. 
As \cite[\S 7.1]{OO18} shows, the locus $M_{W}^{\rm nn}$ 
is in the strictly stable locus, which parametrizes the 
semi-log-canonical surface of the form \eqref{D.tilde.eqn}, 
which is nothing but $\tilde{\D}_{16}$-type in \cite{ABE}. 

If we have $(g_{8},g_{12})\in H^{0}(\OO(8))\times H^{0}(\OO(12))$ 
over the base $K[[t]]$, with reduction sits in the stable locus 
mapping down to $M_{W}^{\rm nn}$, then the GIT stable reduction 
proves that after finite base change if necessary, if we apply 
an element of $SL(2)$ in the coefficient $K((t))$, we get reduction 
with special fiber of the surface of type \eqref{D.tilde.eqn}. This completes the 
required process. 

\begin{Rem}
By comparing with toroidal compactification, recall that 
Type II locus does not depend on the choice of admissible rational 
polyhedral decompositions (cf., e.g., \cite{Fri}). 
Furthermore,  the preimage of $M_{W}^{\rm nn}$ in it 
which we write as $M_{W}^{\rm nn, tor}$ 
is a ${\rm Aut}(D_{16})$-quotient of the $16$-th self fiberproduct of 
the (coarse moduli of) universal elliptic curve over $M_{W}^{\rm nn}\simeq 
\A_{j}^{1}$ ($j$ stands for the $j$-invariant of $E$). There is a very 
clear geometric meaning to this phenomenon -  
by \cite[\S3]{CM05} and \cite[7.20, 7.22, 7.44]{ABE}, 
the $16$ real discriminants are arbitrary (for each fixed $E$), 
which give the difference of 
this $M_{W}^{\rm nn}$ and $M_{W}^{\rm nn, tor}$. 

Note that the parametrized degeneration is slightly different between 
that in \cite[\S3]{CM05} and \cite{ABE} (i.e., the former has two components 
one of which is those parametrized in \cite{ABE} - $\D_{16}$-surface), 
but this is unsubstantial difference. Indeed, the relation is by 
a simple birational transform (at the total space level) as explained in 
\cite[\S 7.1.3]{OO18}. 
\end{Rem}

\end{Subcase}

\begin{Subcase}(to $\widetilde{\E_{8}}\widetilde{\E_{8}}$) 
We treat the case of degenerating from $M_{W}$ to $M_{W}^{\rm seg}
\subset M_{W}^{\rm SBB}$, which we recall to be the 
$\tilde{E}_{8}^{\oplus 2}$-type $1$-cusp (see also its GIT interpretation in \cite[\S7]{OO18}). 

Take $(X,R)\twoheadrightarrow B$ in $\mathcal{M}_{W}(K((t)))$ which degenerates to 
$M_{W}^{\rm seg}$ at the closed point. 
From \cite[\S7]{OO18}, it follows that we can lift this data to 
$(g_{8}, g_{12})\in H^{0}(\OO(8))\times H^{0}(\OO(12))$ with coefficients in 
$K[[t]]$ so that its reduction is $(cs^{4},s^{6})$ for $c\neq 3$. 

Then we can exploit the same procedure as Case\ref{Case1} Step~\ref{End.surfaces}, 
to replace the reduction as the reducible fibered surface 
\begin{align}
(X_{1}\cup X_{2})\to \PP^{1}\cup \PP^{1}
\end{align} where 
$X_{1}$ (resp., $X_{2}$) is a of the $\PP^{2}$-bundle 
$\PP_{\PP^{1}}(\OO_{\PP^1}\oplus \OO_{\PP^1}(2)\oplus \OO_{\PP^1}(3))$
over the first $\PP^{1}$ (resp., the second $\PP^{1}$), 
defined by 
\begin{align}
[y^{2}z=4x^{3}-g_{4}^{\nu}xz^{2}
+g_{6}^{\nu}z^{3}], \\ 
[y^{2}z=4x^{3}-h_{4}^{\nu}xz^{2}
+h_{6}^{\nu}z^{3}],
\end{align}
respectively. 
Then, from our assumption that $c\neq 3$, it follows that the 
double locus $X_{1}\cap X_{2}$ is smooth elliptic curve fiber, 
hence this is of $\tilde{E}_{8}\tilde{E}_{8}$-type surface as desired. 
We have $12$ real discriminant points in each base. 
\end{Subcase}

\end{Case}

\vspace{4mm}

\begin{Case}[Further degenerations from Type III degenerations]

Below, 
we study the occuring degeneration componentwise. We proceed as follows. In the notations below, we promise that 
\begin{enumerate}
\item $\sum l_{i}=l$, 
\item all the subindices are nonnegative, 
\item We call the images of really singular fibers (cf., notations below 
Theorem~\ref{ABE.stable.reduction}) 
on any of 
{\it possibly singular } 
$[(X,R)\to B(\simeq \PP^{1}\cup\cdots \cup \PP^{1})] 
\in \overline{M_{W}}^{\rm ABE}(K) \text{ or } \overline{M_{W}}^{\rm ABE}
(K((t)))$ as 
$\chi_{1},\cdots,\chi_{24}$ (which extends the original meaning 
in the realm of $M_{W}$) and continue to call them real discriminant points. 
\item Further, before each disucssion below, we lift this data $[(X,R)\to B\simeq \PP^{1}\cup\cdots \cup \PP^{1}]$ 
by fixing gauge i.e., the isomorphism of every rational component with $\PP^{1}$ 
so that their nodal points have coordinate $0$ or $\infty$. 
\end{enumerate}

\begin{Subcase}($\A_{l-1}$ to 
$\A_{l_{1}-1}\A_{l_{2}-1}\cdots \A_{l_{m}-1}$) 
We now concentrate on the base of component of $A$-type in the 
degenerated 
\begin{align}
[(X,R)\to \PP^{1}  \cup\cdots \cup \PP^{1}]\in \overline{M_{W}}^{\rm ABE}
(K((t)))
\end{align} which we denote as 
$X_{A}\to \PP^{1}$ here, with coordinate $s_{A}$. The real discriminant points 
$\chi_{a+1},\cdots,\chi_{a+l}$ can be seen as formal Puiseux series 
i.e., elements of 
$\overline{K((t))}$. Note that any of $\chi_{a+i}$ is {\it not} $0$ nor 
$\infty$ (as element of $\PP^{1}(K((t)))$. 
Hence, after finite base change, we can 
suppose they all lie in $K((t))$ and we write 
$\Delta_{A}(s_{A}):=\prod_{1\le i\le l} (s_{A}-\chi_{a+i})$. 

Similarly to Step \ref{Step.middle} of Case \ref{Case1}, 
we take Newton polygon ${\rm Newt}(\Delta_{A})$, its 
supporting function $\varphi_{A}$ and the toric 
degeneration model $\B_{A}$ over $\A^{1}$ (hence also over $K[[t]]$) 
whose corresponding fan is 
the normal fan of the graph of $\varphi_{A}$. Or in other words, 
the natural compacfication has moment polytope 
\begin{align}
\{(x,y)\in \R^{2}\mid 0\le x\le l, 
-c\le y\le - \varphi_{A}(x)\}
\end{align} 
for a constant $c\gg 0$. 
This is one component of our desired $\B$ i.e., the closure of $\PP^{1}_{s_{A}}$. 
Then, accordingly, we degenerate the ambient space $\mathbb{P}^{2}\times 
\PP_{s_{A}}^{1}\simeq \PP_{\PP_{s_{A}}^{1}}(\OO^{\oplus 3})$ 
to still trivial $\PP^{2}_{s_{A}}$-bundle over $\B$ so that 
we obtain the (semi-log-canonical) 
union of $\A$-type log surfaces as the closure of $X_{A}$ 
inside the ambient model $\B\times \PP^{2}$. 

\end{Subcase}

\begin{Subcase}($\D_{k+l}$ to $\D_{k}\A_{l_{1}-1}\cdots\A_{l_{m}-1}$) 
Next we consider the base of component of $D$-type in the 
degenerated 
\begin{align}
[(X,R)\to \PP^{1}  \cup\cdots \cup \PP^{1}]\in \overline{M_{W}}^{\rm ABE}
(K((t)))
\end{align} which we denote as 
$X_{D}\to \PP^{1}$ here, with coordinate $s_{D}$. 
We can and do suppose that the only double curve in $X_{D}$ 
which is the intersection of next surface component, 
has coordinates $s_{D}=\infty$. 

Recall from \S\ref{D.surf} that we have explicit Weierstrass type equation 
for the $\D$-type surface, 
\eqref{D.surf.eqn} in terms of a quadratic polynomial $P_{2}(s_{D})$ 
whose coefficients live in $K((t))$. By quadratic base change if necessary, 
we can 
further suppose its two roots are also both in $K((t))$. 
Then by multiplying appropriate powers $t^{2c}$, $t^{3c}$ of $t$ to $g_{4}^{\nu}$ 
and $g_{6}^{\nu}$ which does not change the isomorphism class of 
original $X_{D}\to \mathbb{P}^{1}$ (over $t\neq 0$), we can and do assume that coefficients of both lie in 
$K[[t]]$ and do not vanish at $t=0$ generically (with respect to $s_{D}$). 

If some of real discriminants $\chi_{i}$ in the base of $X_{D}$ 
(including two roots of $P_{2}$)  
converges to $\infty$, whose fiber is in the double locus 
of the surface, then we do weighted blow up of the model 
finite times 
so that all $\chi_{i}$ in the base of $X_{D}$ never diverge to $
\infty$ 
when $t=0$. Furthermore, in a similar manner, if two distinct roots of 
$P_{2}$ converges to the same point for $t\to 0$, 
then we do further weighted blow up at the point 
so that the two roots converge to different points. 
After these composition of weighted blow ups of the base surface, 
we contract all irreducible components of $t=0$ 
which do not contain  any real discriminant. 

Then, we degenerate the bundle $\OO_{\PP^1}\oplus \OO_{\PP^1}(2)\oplus \OO_{\PP^1}(3)$ on 
$\B_{t\neq 0}$ to the whole model obtained above, 
so that it restricts to 
 $\OO_{\PP^1}\oplus \OO_{\PP^1}(2)\oplus \OO_{\PP^1}(3)$ on the component where the roots of $P_{2}$ 
 converge, to $\OO_{\PP^{1}}^{3}$ otherwise. We consider its projectivization 
 as the ambient model and take closure of $X$ to get desired model of 
 type $\D\A\cdots\A$. 

\end{Subcase}

\begin{Subcase}($\E_{k+l}$ to $\E_{k}\A_{l_{1}-1}\cdots\A_{l_{m}-1}$)
Next we consider the base of component of $\E$-type in the 
degenerated 
\begin{align}
[(X,R)\to \PP^{1}  \cup\cdots \cup \PP^{1}]\in \overline{M_{W}}^{\rm ABE}
(K((t)))
\end{align} which we denote as 
$\pi_{E}\colon X_{E}\to \PP^{1}_{s_{E}}$ here, with coordinate $s_{E}$. 
We can and do suppose that the only double curve in $X_{E}$ 
which is the intersection of next surface component, 
has coordinates $s_{D}=\infty$. 
We consider stable reduction of generic fiber thus over $K(s_{E})$, 
which is from elliptic curve to either elliptic curve or 
(irreducible) nodal rational curve over whole $K(s_{E})[[t]])$. 
Correspondingly, we realize this model by 
multiplying $t^{2c}$ (resp., $t^{3c}$) 
$$g_{4}  \in 
H^{0}(\PP^{1}_{s_{E}},\OO(4)) (\text{resp.}, g_{6} \in H^{0}(\PP^{1}_{s_{E}},\OO(6)))$$ 
 with appropriate $c$ (we fix this 
normalization from now on), 
so that $g_{4}, g_{6}$ both become non-zero at $t=0$. 

In this subcase, we focus when the generic fiber at $t=0$ is 
smooth i.e., elliptic curve, which we suppose from now on, 
and leave the nodal reduction case to next subcase. 

Suppose the real discriminant points $\chi_{e+1},\cdots,\chi_{e+k+l+3}$ 
below $X_{E}$ 
also all sit in $K((t))$ after finite base change if necessary. 
Then in a similar manner as before, 
with respect to the variable $s'_{E}:=s_{E}^{-1}$, we set 
\begin{align}
P_{E}(s'_{E}):=\prod_{i=1}^{3+k+l} (s'_{E}-\chi_{e+i}^{-1}),
\end{align}
consider its Newton polygon ${\rm Newt}(P_{E})$, 
then corresponding toric blow up model 
$\B_{E}\to \PP_{s_{E}}^{1}\times {\rm Spec}(K[[t]])$  
with cosupport at $t=0, s_{E}=\infty$. 
Then, generalizing the stable reduction over $K(s_{E})[[t]]$, 
we extend ambient space $\PP_{\PP_{s_{E}}^{1}}(\OO_{\PP^1}\oplus 
\OO_{\PP^1}(2)\oplus \OO_{\PP^1}(3))$ of $X_{E}$ to that of $\B$ so that 
its restriction to $\PP_{s_{E}}^{1}$ is $\PP(\OO_{\PP^1}\oplus 
\OO_{\PP^1}(2)\oplus \OO_{\PP^1}(3))$ which includes the $t$-direction 
stable reduction of the generic fiber of $X_{E}$, and trivial 
$\PP^{1}$-bundle over the rest of components of $\B|_{t=0}$. 
Then it is easy to see the closure inside the ambient model over $K[[t]]$ 
gives reduction to the surface of type $\E\A\cdots \A$ in \cite{ABE}. 
\end{Subcase}

\begin{Subcase}($\E_{k+l}$ to $\D_{k-1}\A_{l_{1}-1}\cdots\A_{l_{m}-1}$)
Similarly to the previous subcase, we next treat the case when 
$t$-direction stable reduction of the generic fiber of $X_{E}$ becomes 
nodal (i.e., $j=\infty$). This assumption means 
\begin{align}
\Delta_{12}=(g_{4}^{\nu})^{3}-27(g_{6}^{\nu})^{2}=0
\end{align} 
hence we can write $g_{4}^{\nu}=3P_{2}^{2}, g_{6}^{\nu}=P_{2}^{3}$. 
Since we normalized our $g_{i}^{\nu}$ to give the $t$-direction 
stable reduction of the generic fiber, $P_{2}|_{t=0}\neq 0$ as a polynomial. 

If the roots of $P_{2}|_{t=0}$ remain finite and distinct, 
then we only need to do toric modifications of the base model 
$\mathbb{P}^{1}_{s_{E}}\times {\rm Spec}K[[t]]$ at cosupport 
$\infty \times 0(\text{closed point})$. 
As it is completely similar to the just previous subcase, using 
Newton polygon of the polynomial of $s'_{E}$ with roots $\chi_{i}^{-1}$ 
converging to $0$, we omit details. 

If at least one the roots of $P_{2}|_{t=0}$ diverge, 
then we do toric blow up at 
$\infty \times 0\in \mathbb{P}^{1}_{s_{E}}\times {\rm Spec}K[[t]]$ 
so that $\B|_{t=0}$ becomes union of $\mathbb{P}^{1}_{s}$ with one or two 
exceptional divisors to each of which the diverging real discriminant 
converge. 
Also, if the roots of $P_{2}$ converge to same points $q$ in 
$\mathbb{P}^{1}_{s_{E}}$, we do weighted blow up 
of the base model surface 
at the point $q$ so that the roots converge to different points 
in the same component which we (still) denote as $\mathbb{P}^{1}_{s}$. 
After that, we contract all irreducible components (curves) 
of $t=0$ which do not contain any real discriminant. 
Then  again similarly, 
we take ambient space whose restriction to 
$\mathbb{P}^{1}_{s}\times \{t=0\}$ (resp., other components) 
is $\PP_{\PP^{1}}
(\OO_{\PP^1}\oplus \OO_{\PP^1}(2)\oplus \OO_{\PP^1}(3))$ (resp., trivial $\mathbb{P}^{2}$-bundle). 

After all these procedures, 
we obtain the model of reduction type $\D\A\cdots\A$. 

\end{Subcase}
\end{Case}

\vspace{4mm}

\begin{Case}[From Type II to Type III]\label{Case4}

Now we deal with the case when the corresponding morphism from 
${\rm Spec}K[[t]]\to \overline{M_{W}}^{\rm SBB}$, where the 
target space refers to the Satake-Baily-Borel compactification, 
maps generic point 
inside $1$-cusp ($M_{W}^{\rm seg}$ and $M_{W}^{\rm nn}$ in the \cite[\S7]{OO18} 
notation), and maps the closed point to $0$-cusp $M_{W}^{\rm nn,seg}$. 
We assume this below and call it $(*_{II,III})$. 

\begin{Subcase}($\widetilde{\E_{8}}\widetilde{\E_{8}}$ to $\E_{9-l}\A_{l_{1}-1}\cdots\A_{l_{m}-1}$)
First, we treat the case when the generic point of ${\rm Spec}K[[t]]$ 
maps to $M_{W}^{\rm seg}$. (Other case when the generic point of 
${\rm Spec}K[[t]]$ maps to 
$M_{W}^{\rm nn}$, is treated in the {\bf Subcase} after next.) 
We write the component of $\E_{9}$-surface (\cite{ABE}) i.e., 
rational elliptic surface with double locus a single smooth fiber, 
as $X_{E}\to B_{E}\simeq \PP^{1}$ as local notation. We suppose the double locus 
fibers over $\infty$. 

In case the reduction $t=0$ gives divergence of some real discriminants in 
the base $B_{E}$ 
to $\infty$, then we again do the toric blow ups of the model completely 
similarly as in previous steps via Newton polygon technique, 
so that the real discriminant points 
only converge finite in the strict transform of $B_{E}$ and 
smooth points in $\B|_{t=0}$ in general. Then again in the similar manner, 
we obtain model of polarization whose restriction to $B_{E}$ 
is $\OO_{\PP^1}\oplus \OO_{\PP^1}(2)\oplus \OO_{\PP^1}(3)$ while trivial $\OO^{\oplus 3}$ 
otherwise, projectify it and take closure of $X_{E}$ inside. 

If such model is generically smooth over the strict transform of $B_{E}$ (
otherwise, proceed to next subcase). 
Then by the assumption $(*_{II,III})$ it follows that the 
fiber over $\infty$ becomes nodal at $t=0$ (otherwise, it remains to be 
in Type II locus  i.e., $1$-cusps of $\overline{M_{W}}^{\rm SBB}$. 
Hence the reduction for $t=0$ is the desired fibred surface of type 
$\E\A\cdots\E$. 

\end{Subcase}

\begin{Subcase}($\widetilde{\E_{8}}\widetilde{\E_{8}}$ to $\D_{8-l}\A_{l_{1}-1}\cdots\A_{l_{m}-1}$)
If the obtained model of $(X_{E},R)\to B_{E}$ in the last step 
is {\it not} generically smooth 
over the strict transform of $B_{E}$, then the corresponding 
elements of $H^{0}(\OO_{\PP^{1}}(4))$ (resp., $H^{0}(\OO_{\PP^{1}}(6))$) 
which we still prefer to write $g_{4}^{\nu}$, $g_{6}^{\nu}$ 
are of the form $(3P_{2}^{2},P_{2}^{3})$ with some 
$P_{2}\in H^{0}(\OO_{\PP^{1}}(2))$. If $P_{2}$ vanishes at $\infty$, 
i.e., degree at most $1$ as a polynomial, then 
it means that one of the root of $P_{2}$ which is also a real discriminant point, 
diverges (or converges) to $\infty$. 
We 
do toric blow up of the model of $B_{E}$ at this stage by the Newton polygon of 
the polynomial whose roots are diverging real discrminants, 
as in the previous steps. The process avoids 
the divergence of real discriminants $\infty$ 
while 
procuding further rational components in the reduction of base $\B|_{t=0}$. 
If $P_{2}$ is {\it not} squarefree, 
we do the same process as Case\ref{Case1} Step\ref{Real.end}. 
Then we contract all irreducible components of $t=0$ 
which do not contain any real discriminants. 

Then finally, similarly, we create the model of $\OO_{\PP^1}\oplus \OO_{\PP^1}(2)\oplus \OO_{\PP^1}(3)$ at $t\neq 0$ 
as before, its projectivization, and take the 
closure of $X_{E}$ inside, which is our desired model. 
In this manner, we obtain further degeneration to surface of type $\D\A\cdots\A$. 

\end{Subcase}

\begin{Subcase}($\widetilde{\D_{16}}$ to $\D_{a}\A_{l_{1}-1}\cdots\A_{l_{m}-1}\D_{b}
$ with $a+b+l=16$)
Now we treat the case when the generic point of ${\rm Spec}K[[t]]$ 
maps to $M_{W}^{\rm nn}$ while the 
closed point maps to $M_{W}^{\rm seg}\cap M_{W}^{\rm nn}$, 
i.e., degenerations of $\widetilde{\D}_{16}$-type 
surfaces to type III surfaces. 

We lift the $K((t))$-rational point at $\mathcal{M}_{W}^{\rm nn}$ 
to $(g_{8}=3G_{4}^{2},g_{12}=G_{4}^{3})$ with $G_{4}\in H^{0}(\OO(4))$ 
with coefficient $K((t))$. By multiplying appropriate integer power of $t$, 
we can first assume that $G_{4}$ has all coefficients in $K[[t]]$. 
We also set the solutions of $G_{4}$ as $\sigma_{1},\sigma_{2},\tau_{1},
\tau_{2}$, 
which we can and do assume to be in $K((t))$ after finite base change of $K[[t]]$ 
if necessary. We suppose $\sigma_{i}|_{t=0}=0, \tau_{i}|_{t=0}=\infty$. 

Similarly to Case\ref{Case1} Step \ref{End.surfaces}, 
we set 

\begin{align}\label{f.def}
f(0):=\min \{{\rm val}_{t}(\sigma_{1}),{\rm val}_{t}(\sigma_{2})\},
\end{align}
\begin{align}\label{f'.def}
f(\infty):=\min \{{\rm val}_{t}(\tau_{1}^{-1}),{\rm val}_{t}(\tau_{2}^{-1})\}.
\end{align}
and  consider blow up $\mathcal{B}_{1}\to \mathcal{B}_{\rm triv}$ by 
\begin{align}
\langle s,t^{f(0)} \rangle \cdot \langle s^{-1}, t^{f(\infty)}\rangle.
\end{align}
Then, the special fibre of $\mathcal{B}_{1}$ over $t=0$ 
is 
\begin{align}\label{base.step1}
\mathbb{P}^{1}_{\frac{s}{t^{f(0)}}}\cup 
\mathbb{P}^{1}_{s}
\cup \mathbb{P}^{1}_{\frac{1}{s\cdot t^{f(\infty)}}}
\end{align} where the 
two ends are exceptional curves. 

Then, as in the Case\ref{Case1} Step \ref{End.surfaces}, 
the first component contains the limit of $\sigma_{i}|_{t\neq 0}$ 
and the last component contains the limiit of $\tau_{i}|_{t\neq 0}$ 
both different from the nodal points. 

Then similarly to 
we degenerate $\mathcal{O}_{\mathbb{P}^{1}}(2)\oplus \mathcal{O}_{\mathbb{P}^{1}}(3)$ on the original base to the whole model so that its reduction restricts to 
\begin{enumerate}
\item \label{1st..compo} 
$\mathbb{P}_{\mathbb{P}_{\frac{s}{t^{f(0)}}}^{1}}(\mathcal{O}_{\mathbb{P}^{1}}(2)\oplus \mathcal{O}_{\mathbb{P}^{1}}(3)\oplus \mathcal{O}_{\mathbb{P}^{1}})$ over $\mathbb{P}^{1}_{\frac{s}{t^{f(0)}}}$ 
\item \label{2nd..compo}
trivial $\mathbb{P}^{2}$-bundle 
over $\mathbb{P}^{1}_{s}$  (i.e., 
$\mathbb{P}^{2}\times \mathbb{P}^{1}_{s}$) 
\item \label{3rd..compo}
$\mathbb{P}_{\mathbb{P}_{s}^{1}}(\mathcal{O}_{\mathbb{P}^{1}}(2)\oplus \mathcal{O}_{\mathbb{P}^{1}}(3)\oplus \mathcal{O}_{\mathbb{P}^{1}})$ over $\mathbb{P}^{1}_{\frac{s'}{t^{f(\infty)}}}$. 
\end{enumerate}

Then our first step is to take closure of original $X$ inside the 
projectivization of the above $\mathbb{P}^{2}$-bundle on the rational chain. 

After this, we do the same procedures as 
 Step \ref{Step.middle}, Step \ref{Step.1.3} and then Step \ref{Real.end} 
 of Case \ref{Case1}. 
 Then we obtain the desired reduction to $\D\A\cdots\A\D$ type surface. 
\end{Subcase}

\end{Case}

By here, we complete the case by case reproof of stable reduction type 
Theorem~\ref{ABE.stable.reduction}. 
\end{proof}
Therefore, the completion of proof of Theorem~\ref{ABE.moduli} 
also follows the above (re)proof of Theorem~\ref{ABE.stable.reduction} 
(recall the beginning of our proof). 
\end{proof}

The identification of the 
normalization of $\overline{M_{W}^{\rm ABE}}$ 
with the toroidal compactification in \cite[\S7]{ABE} 
follows from the fact that the 
relative location of the real discriminants in the 
broken base chain of $\PP^{1}$s are encoded as 
$(\mathbb{G}_{m}\otimes \Lambda_{i})$. This 
may also follows again from further analysis in 
addition to above, but since this point overlaps more closely 
with the arguments in \cite{ABE} we do not pursue this here. See 
\cite[the proof of Proposition 7.45]{ABE}. 

Instead, we do some more explicit description. 

\begin{Cor}[of our reproof of Theorem~\ref{ABE.stable.reduction}]
\label{II.III.}
The boundary strata of $\overline{M_{W}}^{\rm ABE}$ which parametrizes 
degenerated surfaces of  the 
following stable types

\begin{align}
\begin{cases}
\E\A\E \\
\E\D \\ 
\E\A\cdots\A\D_{k} (\text{with } k\ge 9),
\end{cases} 
\end{align}
are not in the closure of two boundary prime divisors of Type II.

\end{Cor}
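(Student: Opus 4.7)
The plan is to read off the conclusion directly from the exhaustive case analysis in Case~\ref{Case4} (``From Type II to Type III'') of the proof of Theorem~\ref{ABE.stable.reduction}. Recall that the two Type II boundary prime divisors of $\overline{M_{W}}^{\rm ABE}$ are the closures of $M_{W}^{\rm nn}$ and $M_{W}^{\rm seg}$, whose generic points parametrize $\widetilde{\mathbb{D}}_{16}$-surfaces (cf.~\S\ref{D.tilde}) and $\widetilde{\mathbb{E}}_{8}\widetilde{\mathbb{E}}_{8}$-surfaces (cf.~\S\ref{MWseg}) respectively. A Type III stratum lies in the closure of one of these divisors if and only if it appears as the special fibre of some one-parameter stable family whose generic fibre is of the corresponding Type II kind, and Case~\ref{Case4} provides precisely such an enumeration.

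First I would invoke the $\widetilde{\mathbb{D}}_{16}$-subcase, which shows that the only Type III stable types arising by degeneration from $M_{W}^{\rm nn}$ are of the form $\mathbb{D}_{a}\mathbb{A}_{l_{1}-1}\cdots\mathbb{A}_{l_{m}-1}\mathbb{D}_{b}$ with $a+b+\sum_{i}l_{i}=16$. In particular both end-components are of $\mathbb{D}$-type, never $\mathbb{E}$. Consequently any stable type containing at least one $\mathbb{E}$-end -- and this covers each of $\mathbb{E}\mathbb{A}\mathbb{E}$, $\mathbb{E}\mathbb{D}$, and $\mathbb{E}\mathbb{A}\cdots\mathbb{A}\mathbb{D}_{k}$ for every $k$ -- cannot lie in the closure of $M_{W}^{\rm nn}$.

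Next I would invoke the $\widetilde{\mathbb{E}}_{8}\widetilde{\mathbb{E}}_{8}$-subcases, which show that at each end of the degenerate chain one obtains either an $\mathbb{E}_{9-l}$ or a $\mathbb{D}_{8-l}$ component with some $l\ge 0$. In particular every $\mathbb{D}$-end produced in the closure of $M_{W}^{\rm seg}$ has subindex at most $8$, so the stratum $\mathbb{E}\mathbb{A}\cdots\mathbb{A}\mathbb{D}_{k}$ with $k\ge 9$ also fails to lie in the closure of $M_{W}^{\rm seg}$. Combining with the previous paragraph, each of the three listed stable types is contained in the closure of at most one of the two Type II boundary prime divisors, which is the desired conclusion.

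No genuine obstacle is anticipated, since the bulk of the work -- exhibiting explicit stable reductions from each Type II locus and extracting the numerical constraints on the end-components -- has already been carried out via Weierstrass normal forms and Newton-polygon base modifications in the proof of Theorem~\ref{ABE.stable.reduction}. The only point that requires a brief sanity check is the exhaustiveness of that enumeration (so that no degeneration route has been missed), but this is precisely what the constructive argument there establishes, as every one-parameter family in $\mathcal{M}_{W}(K((t)))$ was treated.
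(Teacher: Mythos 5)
There is a genuine gap, and it comes from reading the conclusion too weakly. The corollary (as the paper proves it) asserts that each of the listed strata lies in the closure of \emph{neither} of the two Type II boundary prime divisors, whereas your final sentence only delivers ``contained in the closure of at most one of the two''. Concretely, your argument never rules out the $\E\A\E$ and $\E\D$ strata lying in the closure of the $\widetilde{\E}_{8}\widetilde{\E}_{8}$ (i.e.\ $M_{W}^{\rm seg}$--side) divisor, and the Case~\ref{Case4} enumeration alone does not obviously do so: degenerations from that divisor do produce stable types of the shapes $\E\A\cdots\A\E$ and $\E\A\cdots\A\D$, so one needs a further quantitative input to exclude precisely the configurations listed in the corollary. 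The paper supplies this by a dimension count which your proposal omits: the $\E\A\E$ and $\E\D$ strata in question are the boundary \emph{prime divisors} of $\overline{M_{W}}^{\rm ABE}$, hence $17$-dimensional, while the two Type II boundary components are also $17$-dimensional, so neither can be contained in the closure of the other. (Alternatively, for $\E_{k}\D_{17-k}$ one can note that the $\D$-index $17-k\ge 9$ automatically, so your index bound $\le 8$ from the $\widetilde{\E}_{8}\widetilde{\E}_{8}$ subcases applies; but for $\E\A\E$ some version of the dimension/total-index drop is still needed.)

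The parts you do prove are in line with the paper: the $\widetilde{\D}_{16}$ subcase yields only $\D\A\cdots\A\D$ types, excluding every listed stratum from the closure of the $M_{W}^{\rm nn}$-side divisor, and the bound $\D_{8-l}$ on $\D$-ends coming from $\widetilde{\E}_{8}\widetilde{\E}_{8}$ excludes $\E\A\cdots\A\D_{k}$, $k\ge 9$, from the other closure --- this is exactly the paper's ``stable reduction argument'' (equivalently its lattice-embedding observation) for the third family. So the fix is simply to add the dimension comparison for the two divisorial types and to restate the conclusion as exclusion from both closures.
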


\begin{proof}
The first two starata are both $17$-dimension by the easy computation, 
while the Type II boundary components are also both $17$-dimension, 
hence the proof follows. The last stratum, the proof follows from 
our stable reducion arguments (or from the observation below). 
\end{proof}
We observe that, in our situation at least, 
if a surface component which corresponds to the lattice of $\Lambda$ type 
degenerates to those of type $\Lambda_{1}, \cdots, \Lambda_{m}$, 
$\Lambda_{1}\oplus\cdots\oplus \Lambda_{m}$ is a sublattice of $\Lambda$. This is partially explained in \cite{ABE} and also 
related to Proposition \ref{2lattices} to be explained. 

\begin{Rem}
Recall from \cite[\S 4.1]{DHT} combined with \cite[\S 3.3]{CD}, 
the interesting observation 
that one aspect of the classical Shioda-Inose structure 
construction to $II_{1,17}$-lattice polarized (higher Picard rank) 
K3 surface can be explained by an interesting Jacobian fibration 
which corresponds to the strata $M_{W}^{nn}$. 
The correspondence is explained via a part of Dolgachev-Nikulin 
mirror symmetry \cite[especially 7.11]{Dolg} i.e., the fiber of such Jacobian fibration 
plus the elliptic fiber of element of $M_{W}$ provides 
Type II degeneration from $M_{W}$ to $M_{W}^{\rm nn}$. 
This remark is not essentially new. 
\end{Rem}

\subsubsection*{Boundary strata of small codimensions}\label{bd.div.sec}
We classify boundary divisors and boundary strata of 
codimension $2$ of the  
compactification $\overline{M_{W}}^{\rm ABE}$. 
As prime divisors, there are at total $54$ of those as follows: 

\begin{enumerate}
\item 
$\E_{k_{1}}\A_{k_{2}}\E_{k_{3}}$ where 
$k_{1}+k_{2}+k_{3}=17, 
0\le k_{1}\le 8, 0\le k_{2}\le 17, 
0\le k_{3}\le 8.$ 
At total, we have $45$ boundary prime 
divisors of this type. 
The moduli is the product of Weyl group quotient of at total $17$-
dimensional algebraic tori (divided by left-right involution if $k_{1}=k_{3}$). 
\item 
$\E_{k}\D_{17-k}$ where $0\le k\le 8$. 9 of these boundary 
prime divisors. 
\end{enumerate}

The classification of 
$16$-dimensional boundary strata are as follows: 

\begin{enumerate}
\item 
$\E_{k_{1}}\A_{k_{2}}\A_{k_{3}}\E_{16-k_{1}-k_{2}-k_{3}}$ type with 
each $k_{i}\ge 0$. 
\item 
$\E_{k_{1}}\A_{k_{2}}\D_{16-k_{1}-k_{2}}$ with non-negative index. 
By \cite[\S 7I]{ABE}, the normalization 
$\overline{\M_{W}}^{\rm toroidal}\to 
\overline{M_{W}}^{\rm ABE}$ are nontrivial at the $9$ irreducible 
components of those with $k_{1}+k_{2}=16$, $0\le k_{1}\le 8$. 
\item 
$\D\D$ type. Again, by {\it loc.cit}, the normalizations are non
isomorphic at the one component for $\D_{0}\D_{16}$. 
\end{enumerate}

Hence, the normalization of $\overline{M_{W}}^{\rm ABE}$ 
are non-isomorphic at $9+1=10$ irreducible components of $16$-dimension (which is 
biggest dimension), and the preimage becomes $18+2=20$ components.



\part{Application to type II degeneration of K3 surfaces}

\section{Limit measure along type II degeneration}

\subsection{Limit points}

While the previous part I focuses on the {\it elliptic K3 surfaces}, 
their degenerations and moduli compactification, 
in this part II, we apply it to study more general K3 surfaces 
degeneration of type II over $\C$. 
The main point is, as in \cite{OO18}, 
the elliptic K3 structure appears around boundary as 
special Lagrangian fibration after suitable hyperK\"ahler rotation, 
as expected in the context of the Mirror symmetry and 
shown in \cite[\S 4]{OO18}. 
If we follow the setup of \cite[\S 6]{OO18}, 
we first observe the following.  

\begin{Lem}\label{II.lem}
If we naturally send $\mathcal{F}_{2d}\ni (X,L)$ 
into $\mathcal{M}_{K3}$ 
by adding $c_{1}(L)$ as additional period, 
type II cusps map to the strata $\mathcal{M}_{K3}(d)$ (see \cite[\S 6]{OO18}) 
of the Satake compactification of adjoint type 
$\overline{\mathcal{M}_{K3}}^{\rm Sat,adj}$. 
\end{Lem}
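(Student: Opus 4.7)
The approach is via the boundary extension of the natural period map. Recall that $\mathcal{F}_{2d}$ is uniformized by the Type IV domain $\mathcal{D}_{2d}$ for $\Lambda_{2d}=c_{1}(L)^{\perp}\subset\Lambda_{K3}$ of signature $(2,19)$, while $\mathcal{M}_{K3}$ uses the full signature-$(3,19)$ lattice $\Lambda_{K3}$; the map $\mathcal{F}_{2d}\to\mathcal{M}_{K3}$ alluded to in the statement records the Hodge line (viewed now inside $\Lambda_{K3}\otimes\C$) together with $c_{1}(L)$ as an additional marked real class, i.e.\ it is the obvious lift coming from $\Lambda_{2d}\hookrightarrow\Lambda_{K3}$ enriched by the fixed vector $c_{1}(L)$.

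By Baily--Borel, Type II cusps of $\mathcal{F}_{2d}$ are in bijection with $O(\Lambda_{2d})$-orbits of primitive rational isotropic planes $J\subset(\Lambda_{2d})_{\Q}$. For each such $J$, viewed inside $(\Lambda_{K3})_{\Q}$ we still have $J$ isotropic of rank $2$ and $J\perp c_{1}(L)$, so the boundary extension of the period map sends this cusp to a Type II stratum of $\overline{\mathcal{M}_{K3}}^{\rm Sat,adj}$ labeled by the $O(\Lambda_{K3})$-orbit of the pair $(J,\overline{c_{1}(L)})$, where $\overline{c_{1}(L)}$ denotes the residue class of $c_{1}(L)$ in $J^{\perp_{\Lambda_{K3}}}/J$.

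Finally, I would unwind the definition of $\mathcal{M}_{K3}(d)$ from \cite[\S6]{OO18}: the ``adjoint'' quotient forgets the residual Hodge structure on $J^{\perp}/J$ (the elliptic-curve period that would refine the cusp), leaving a distinguished marked class whose square labels the stratum, and $\mathcal{M}_{K3}(d)$ is by definition the stratum where that square equals $2d$. Since $\langle c_{1}(L),c_{1}(L)\rangle=2d$ and $c_{1}(L)\perp J$, the residue $\overline{c_{1}(L)}$ has square $2d$, placing the image in $\mathcal{M}_{K3}(d)$. The main obstacle I anticipate is purely bookkeeping: confirming that the labeling of $\mathcal{M}_{K3}(d)$ in \cite[\S6]{OO18} is indexed exactly by the polarization-class residue (as opposed to some finer discriminant-group invariant), and checking compatibility of the ``adjoint'' qualification between source and target; this should reduce to a definitional unpacking of \textit{loc.\,cit}.
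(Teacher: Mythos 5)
Your first two paragraphs are in the right spirit and essentially parallel the paper's (very short) argument: along a type II degeneration the plane $\langle[\mathrm{Re}\,\Omega],[\mathrm{Im}\,\Omega]\rangle$ degenerates to an isotropic $2$-plane $J$ orthogonal to $c_{1}(L)$, while $c_{1}(L)$ itself stays fixed, and one then compares with the description of the boundary strata in \cite[\S 6.2]{OO18}. (One caveat even there: the target is the Satake compactification of the \emph{non-Hermitian} $SO(3,19)$ symmetric space, so there is no Baily--Borel-style holomorphic boundary extension to invoke; one argues directly that the positive $3$-plane $\langle \mathrm{Re}\,\Omega,\mathrm{Im}\,\Omega,\omega\rangle$ converges in the Satake topology, which is what the paper does.)

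The genuine gap is in your last step, and it is not bookkeeping. In \cite[\S 6]{OO18} the symbol $(d)$ in $\mathcal{M}_{\rm K3}(d)$ is a \emph{letter labelling a boundary stratum type} of $\overline{\mathcal{M}_{\rm K3}}^{\rm Sat,adj}$; it has nothing to do with the polarization degree $2d$. Indeed, as recalled in this paper, $\mathcal{M}_{\rm K3}(d)$ is the unique $0$-dimensional cusp of the Satake--Baily--Borel compactification of $\mathcal{M}_{\rm K3}(a)$, i.e.\ a single point, so it cannot be ``the stratum where the square of the residue of $c_{1}(L)$ equals $2d$,'' and in the adjoint-type compactification the limit point does \emph{not} remember the pair $(J,\overline{c_{1}(L)})$ at all. (That extra information, essentially $\overline{c_{1}(L)}\in C^{+}(\Lambda_{\rm seg})$ up to scale, is only recovered later in the finer Satake compactification $\overline{\mathcal{M}_{\rm K3}}^{\rm Sat,\tau}$, where the corresponding stratum $\mathcal{M}_{\rm K3}(d)^{\tau}$ is $17$-dimensional; that is Proposition \ref{lim.tau}, not this lemma.) So the actual content of the lemma — identifying \emph{which} stratum type of the adjoint Satake boundary the limit lands in, which is governed by how the positive $3$-plane degenerates (a $2$-dimensional part becoming isotropic, one positive direction $[\omega]\in\R_{>0}c_{1}(L)$ surviving) and by the convergence criterion of \cite[\S 6.2]{OO18} — is exactly the step your proposal replaces by a guessed definition, and the guess is incorrect.
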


We refine the statements in Proposition \ref{lim.tau} which shows the limit existence in 
a yet another Satake compactification $\overline{\mathcal{M}_{K3}}^{\tau}$ among those non-adjoint types, 
which especially dominates the above compactification 
of adjoint type and 
dilates the $0$-dimensional locus $\mathcal{M}_{K3}(d)$ 
to $17$-dimension. 

\begin{proof}
As it is well-known, for type II degeneration, with some 
fixed marking, 
$\langle [{\rm Re}\Omega_{X}],[{\rm Im}\Omega_{X}]\rangle $ 
converge to isotropic plane while obviously $[\omega_{X}]$ 
remains the same class. Comparing with \S 6.2 of {\it loc.cit}, 
we obtain the proof. 
\end{proof}

Note that the locus $\mathcal{M}_{K3}(d)$ 
is nothing but the only $0$-cusp of the 
Satake-Baily-Borel compactification of $\mathcal{M}_{K3}(a)$, 
which is identified with the moduli of Weierstrass elliptic 
K3 surfaces modulo the involution (see \cite[\S 7]{OO18}). 
This is the key point to convert general problem on 
type II degeneration into type III degeneration of elliptic K3 
surfaces. In other words, roughly we divide the diverging 
isotropic plane into a line plus a line.

\subsection{Limit measure determination via  
Satake compactification}\label{measure.decide}

We now explicitly determine {\it measured} Gromov-Hausdorff limits (\cite{Fuk.mGH})
of tropical K3 surfaces in the sense of \cite[\S4]{OO18} 
so that we can justify the desired PL invariant $V$. 
That is, we study the collapse of 
$2$-dimensional spheres $S^{2}$ with the 
McLean metrics to unit intervals, through the algebro-geometric 
compactifications \cite{ABE} and its study 
in the previous section \ref{ABE.sec} of the asymptotic behaviour of 
singular fibers. 
This is an application of above stable reduction theorem after  
\cite{ABE}, 
providing one way of understanding of measured Gromov-Hausdorff limits 
classification (cf., \cite{Osh} for another way). 

We recall that Satake compactification of 
adjoint representation type coincides with 
certain generalization of Morgan-Shalen type compactification 
\cite[Theorem 2.1]{OO18}. This is the viewpoint we take in this section. 

For our purpose, we introduce the 
geometric realization map in a non-archimedean manner, 
which we write as $\tilde{\Phi}(a)$, as follows. 
This is essentially found by 
\cite[\S 4]{ABE} and 
Y.Oshima \cite{Osh} independently in somewhat different forms. 
The synchronization of the two works was 
rather surprising (at least to me) since their original 
aims were totally different, and also the tools are different: 
the latter was 
in more Hodge-theoritic 
context using a yet another Satake compactification as we define and briefly show below (see \cite{Osh} for details). 
No clear reason of the miraculous coincidence has been found yet, 
while our works mean to take a first step. 

\vspace{2mm}
\subsubsection*{Via a yet another Satake compactification}
As a preparation of precise statements, while more details 
are in \cite{Osh}, 
we consider the irreducible representation $\tau$ of 
$SO_{0}(3,19)$ whose 
highest root is only orthogonal to the leftmost one 
in the Dynkin diagram of \cite[\S 6.1]{OO18}. 
Then, as \cite{Osh} provides more details, 
the corresponding Satake compactification \cite{Sat1, Sat2} 
$\overline{\mathcal{M}_{\rm K3}}^{Sat, \tau}$ 
has $17$-dimensional strata 
$\mathcal{M}_{\rm K3}(d)^{\tau}$ which is 
$$O(\Lambda_{\rm seg})\backslash C^{+}(\Lambda_{\rm seg})/\R_{>0},$$ 
divided by the involution induced by complex conjugation. 
Here, $\Lambda_{\rm seg}:=p^{\perp}/p\simeq U\oplus E_{8}(-1)^{\oplus 2}$ 
with isotropic plane $p\subset \Lambda_{K3}\simeq U^{\oplus 3}
\oplus E_{8}(-1)^{\oplus 3}$, and 
\begin{align}\label{Cplus}
C^{+}(\Lambda_{\rm seg}):=\{x\in \Lambda_{\rm seg}\otimes \R\mid 
x^{2}>0\},
\end{align}
hence isomorphic to the $17$-dimensional real open unit ball. 
Its fundamental domain is provided by Vinberg's method (\cite{ABE, Osh}), 
and we here follow \cite[4C]{ABE} and denote as 
$P\simeq \mathcal{M}_{K3}(d)^{\tau}$ which is a subdivided 
Coxeter chamber (modulo the natural involution). $P$ is of the form: 
$P:=\{x\in C^+(\Lambda_{\rm seg})\mid (x,\alpha_i)>0\}$ for 
 in $\Lambda_{\rm seg}$. 

The formulation below, using Morgan-Shalen type compactification 
are re-designed to fit to the previous discussion of this paper. 

\begin{Def}[Geometric realization $\&$ measure density function]
\label{V.def}
We consider the quotient of 
\begin{align}\label{MSBJ..Sat}
\overline{\mathcal{M}_{W}}^{\rm MSBJ}
\simeq \overline{\mathcal{M}_{W}}^{\rm Sat,adj}
\end{align}
where the right hand side denotes the Satake compactification with respect to the adjoint representation of $SO_{0}(3,19)$ 
(the isomorphism is proven at \cite[2.1]{OO18} as a general theory) 
by 
$O(\Lambda_{\rm seg})/O^{+}(\Lambda_{\rm seg})$, acting as 
the complex conjugate 
involution. Then we obtain compactifications of 
$\mathcal{M}_{K3}(a)$ in \cite{OO18} respectively, 
which we denote as  
\begin{align}\label{MSBJ.Sat}
\overline{\mathcal{M}_{\rm K3}(a)}^{\rm MSBJ}
\simeq \overline{\mathcal{M}_{\rm K3}(a)}^{\rm Sat,adj}. 
\end{align}
Their common boundaries are hence stratified as follows: 
\begin{align}\label{domain.strata}
\mathcal{M}_{\rm K3}(a)\sqcup 
\mathcal{M}_{\rm K3}(d)^{\tau}
\sqcup \{2\text{ points } p_{seg} \text{ and } p_{nn}\}. 
\end{align}

Note this domain \eqref{domain.strata}, away from the two points 
$ p_{seg} \text{ and } p_{nn}$,  
 is also a subset of 
$\partial \overline{\mathcal{M}_{K3}}^{Sat, \tau}$. 
From the left hand side interpretation of \eqref{MSBJ.Sat}, 
$p_{seg}$ (resp., $p_{nn}$) corresponds to the prime divisor of 
toroidal compactifications over the $1$-cusp $M_{W}^{nn}$ 
(resp., $M_{W}^{seg}$) as \cite{CM05}. 

Now, we define \textit{geometric realization map} $\tilde{\Phi}$ 
from the above space \eqref{domain.strata} away from 
$p_{nn}$ to 
\begin{align*}
\{(X,d,\nu)\mid &\text{$(X,d)$ is a compact metric space with diameter one}\\
 &\qquad\qquad \text{and $\nu$ is a Radon measure}\}/\sim,
\end{align*}
where $\sim$ denotes the positive rescale of $\nu$, 
is defined after \cite[\S 7A]{ABE} as follows. 

\begin{enumerate}
\item For $x\in \mathcal{M}_{\rm K3}(a)$, 
we set $\tilde{\Phi}(x)$ as the tropical K3 surface 
$\Phi(x)$ as \cite[\S 6]{OO18} {\it with} its 
Monge-Ampere measure (equivalent to the volume form), 
as the (a priori) additional data. 

\item \label{open.part.realize}
(Open part of 
$P$: cf., \cite[\S 7A]{ABE} and \cite{Osh})
Recall that 
for each $l\in P\simeq 
O(\Lambda_{\rm seg})\backslash C^{+}(\Lambda_{\rm seg})/\R_{>0}$, which is neither $p_{seg}$ nor $p_{nn}$, 
\cite[\S 7A]{ABE} associates a polygon $P_{LR}(l)$ 
which can be rewritten as a translation of 
$$P_{LR}(l)=\{(x,y)\mid 0\le x\le 1, 0\le y\le (V(l))(x)\},$$ 
for some PL function $V(l)$. 
Then we set $\tilde{\Phi}(l)$ 
as $[0,1]$ with the density function $V(l)$. 

\item \label{pseg.realize}
(A special point  $p_{seg}$ cf., \cite{Osh}) 
We set $\tilde{\Phi}(p_{seg}):=([0,1],d,\nu)$ 
with standart metric $d$ and $\nu\equiv 0$. 

\end{enumerate}
\end{Def}

\begin{Thm}[cf. also \cite{Osh} for another proof]\label{continuity2}
The geometric realization map $\tilde{\Phi}$ 
is continuous 
with respect to the measured Gromov-Hausdorff topology in the sense of 
\cite{Fuk.mGH}. 
\end{Thm}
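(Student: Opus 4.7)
The plan is to establish continuity of $\tilde{\Phi}$ by checking each possible type of convergent sequence in the stratified domain
\[
\mathcal{M}_{\rm K3}(a) \sqcup \mathcal{M}_{\rm K3}(d)^{\tau} \sqcup \{p_{seg}\},
\]
reducing the analytic statement to the algebro-geometric degeneration data controlled by Part 1 of the paper. The structure of $\tilde\Phi$ is piecewise, so it suffices to verify continuity at each stratum, and then at the transitions between strata.

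First I would handle the continuity of $\tilde{\Phi}$ restricted to the interior $\mathcal{M}_{\rm K3}(a)$. Here $\tilde{\Phi}$ sends a point to its tropical K3 surface equipped with the McLean/real Monge-Amp\`ere measure. Continuous dependence of this measured metric space on parameters is essentially the continuity statement contained in \cite[\S 7]{OO18}: the Monge-Amp\`ere measure depends continuously on $(g_8,g_{12})$ away from the discriminant locus, and the diameter-one rescale is continuous on $\mathcal{M}_{\rm K3}(a)$. Similarly, continuity of $\tilde{\Phi}$ restricted to the open part of $P$ is built in, since $V(l)$ depends affine-linearly on the coordinates of the Coxeter chamber $P$ (inspect the explicit description of $P_{LR}(l)$ in \cite[\S 7A]{ABE}), so the measures $V(l)\,dx$ on $[0,1]$ depend continuously on $l$ in the weak-$*$ (hence measured Gromov-Hausdorff) sense.

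The main step is showing continuity in the transition from $\mathcal{M}_{\rm K3}(a)$ to a point $l$ in the open part of $P$. Given a sequence $x_n\to l$, lift to a one-parameter degeneration over ${\rm Spec}(K[[t]])$ and apply the stable reduction Theorem \ref{ABE.stable.reduction} to associate to the limit an explicit Weierstrass-type reducible surface $(\mathcal{X}_0, \mathcal{R})$ over the chain of $\mathbb{P}^1$'s. The location and asymptotics of the real discriminants, controlled in Case \ref{Case1} Step \ref{End.surfaces}--Step \ref{Real.end}, determine exactly the slope changes of $V(l)$: each $\A_{k_i}$-component contributes a linear segment of slope dictated by the charge $k_i$, while $\D$- and $\E$-components pin down the two endpoints of the graph. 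The McLean/Monge-Amp\`ere measure on each fiber $\mathcal{X}_t$ has the integral representation
\begin{align*}
\nu_{t} = \log(|g_{8}|^{3}+27|g_{12}|^{2}) - \log |\Delta_{24}| + \text{smooth},
\end{align*}
from \cite[Lemma 7.16]{OO18}, and the asymptotic analysis of the two dominant terms, already used to separate end surfaces and to build the Newton-polygon toric models in Step \ref{End.surfaces} and Step \ref{Step.middle}, translates directly into a limit push-forward: integrating vertical fibers gives a one-dimensional measure on $[0,1]$ whose density is exactly the PL function $V(l)$. This step is the heart of the argument and the main obstacle, since it requires uniform control (not only pointwise limit) of the rescaled measure on the tropical K3 surface; the needed uniformity comes from the finiteness of Kulikov/stable types and the fact that each type has an explicit Weierstrass model, both supplied by Part 1.

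Finally, for the transition to $p_{seg}$, note that approaching $p_{seg}$ in $\overline{\mathcal{M}_{\rm K3}(a)}^{\rm MSBJ}$ corresponds to sending the lattice parameter $l\in P$ to the $1$-cusp; by the explicit form of $V(l)$, its integral $\int_0^1 V(l)\, dx$ stays bounded, while its supremum blows up under the diameter-one rescale, so after the rescale $\sim$ the measure becomes arbitrarily small pointwise, converging weakly to the zero measure. Combined with the fact that the base $[0,1]$ with its standard metric is the underlying metric space throughout the $P$-stratum (after the diameter-one normalization), this gives convergence to $([0,1],d,0)=\tilde{\Phi}(p_{seg})$ in the measured Gromov-Hausdorff sense. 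Assembling these four local continuity statements proves the theorem.
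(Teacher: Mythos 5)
There is a genuine gap at the center of your argument. Your main step reduces an arbitrary convergent sequence $x_n\to l$ to ``a one-parameter degeneration over ${\rm Spec}(K[[t]])$'' and then invokes the stable reduction Theorem \ref{ABE.stable.reduction}. But a sequence in $M_W$ converging to a boundary point of the Satake/Morgan--Shalen compactification is not a holomorphic disk, and for irrational $l\in C^{+}(\Lambda_{\rm seg})$ (which fill out most of the stratum $\mathcal{M}_{\rm K3}(d)^{\tau}$) no algebraic one-parameter family converges to $l$ at all, since formal degenerations produce only rational monodromy/PL data. The paper deals with exactly this: it introduces an archimedean analogue ${\rm Newt'}(\Delta_{24})$ of the Newton polygon built from $-\log|d_j|$ and the sequence versions $\epsilon,\epsilon'$ of $e(0),e(\infty)$, uses the functoriality of the Morgan--Shalen (MSBJ) construction to show that the measured limit depends only on the limit point in $\overline{U'''}^{\rm MSBJ}(U'')$, and then exploits density of rational boundary points together with the explicit Kulikov families $X_{LR}(l)$ of \cite{ABE} to pin down the answer at those points. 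Your appeal to ``uniform control from the finiteness of Kulikov/stable types'' does not substitute for this: finiteness of combinatorial types gives no uniform analytic estimate on the rescaled Monge--Amp\`ere measures along an arbitrary sequence, and it does not touch the irrational directions.

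Two further missing ingredients. First, even granting convergence of the push-forward measure to some PL density, you never identify that density with $\tilde{\Phi}$ evaluated at the actual limit point $l$: this requires knowing which class $l\in C^{+}(\Lambda_{\rm seg})$ the sequence converges to, and in the paper this is supplied by the monodromy computation of \cite[Corollary 7.33]{ABE} (via the Engel--Friedman diffeomorphism to a Symington-type fibration) combined with \cite[Theorem 2.8, Corollary 4.25]{OO18}, which equate the MSBJ limit with the monodromy invariant; your proposal asserts ``the density is exactly $V(l)$'' without any such bridge between the degeneration data and the coordinates of $l$. Second, the analytic heart --- that the diameter-one rescaled McLean metrics converge in the measured Gromov--Hausdorff sense to $([-1,1],V^{1/2}dw,V\,dw)$ with $V$ the explicit PL function built from the cut-off values $x_j$ of the $24$ discriminant roots --- is precisely Oshima's Lemma \ref{lim.measure.Osh}, which the paper imports rather than reproves; your sentence that the asymptotics of $\log(|g_8|^3+27|g_{12}|^2)$ and $\log|\Delta_{24}|$ ``translate directly into a limit push-forward'' is where that lemma would have to be proved, and as written it is an assertion, not an argument. (Your treatment of the cusp transition is also misaligned with the paper: the delicate case is sequences from the interior $\mathcal{M}_{\rm K3}(a)$ converging to the $1$-cusp, handled in the paper via the asymptotically cylindrical non-collapsed limit of \cite[\S 7.3.2]{OO18} forcing the rescaled total measure to vanish, not via boundedness of $\int V$ within the stratum $P$.)
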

\noindent
As we mentioned, \cite{Osh} gives a different proof for this, 
notably Steps \ref{st.red.per}, \ref{ABE.V}.  
\begin{proof}

First, we fix a notation and make a setup: we take a 
sequence of $(g_{8},g_{12})$ with subindex $i$ whose 
Weierstrass models in $M_{W}$ which converge to 
a point in $M_{W}^{seg}$, the union of a $1$-cusp and the $0$-cusp,  
in the Satake-Baily-Borel compactification $\overline{M_{W}}$. 
Recall that we show it is isomorphic to the GIT quotient 
compactificatoin of $M_{W}$ 
with 
respect to the Weierstrass model description in \cite[Theorem 7.9]
{OO18}. 
Taking $(c_{1}s^{4},c_{2}s^{6})$ as a GIT polystable 
representative of the limit point in $M_{W}^{seg}$, 
by the Luna slice \'etale theorem at stacky level 
(cf., \cite{Luna, Drezet})
for instance, 
we can and do assume our sequence of $(g_{8},g_{12})$ 
converges to it. For later use, for each 
$i$, we consider the roots of $g_{8}$ (resp., $g_{12}$, 
$\Delta_{24}$) and denote as 
$\{\xi_{j}\}_{j=1,\cdots,8}$ 
(resp., $\{\eta_{j}\}_{j=1,\cdots,12}$, 
$\{\chi_{j}\}_{j=1,\cdots,24}$) 
in ascending order of the absolute values. 
The natural analogues of $e(0)$ \eqref{e.def} and $e(\infty)$ \eqref{e'.def} in our stable reduction arguments 
i.e., sequence version are 
\begin{align}\label{e.e'.seq}
&\epsilon:=\max\{|\xi_{j}|, |\eta_{j'}|
 \mid 2\leq j\leq 4,\ 1\leq j'\leq 6\},\\
&\epsilon':=\max\{|\xi_{j}|^{-1}, |\eta_{j'}|^{-1}
 \mid 5\leq j\leq 7,\ 7\leq j'\leq 12\}. 
\end{align}

\begin{Step}\label{Step1.}
Firstly, this Step \ref{Step1.} focuses on the case when the sequence of 
$[(g_{8},g_{12})]$ converges to a point 
in the $1$-cusp, $M_{W}^{seg}\setminus M_{W}^{nn}$. 

In this case, 
\cite[\S 7.3.2]{OO18} shows 
the corresponding sequence of McLean metrics
 converges to 
infinitely long open surface which is asymptotically cylindrcical 
at two ends $0$ and $\infty$, 
as minimal non-collapsing 
pointed Gromov-Hausdorff limit.  

In this case, for large enough $i$ i.e., 
with the McLean metric close enough to the above 
asymptotically cyrindrical 
surface,  
\cite[\S 7.3.7, notably Lemma 7.26]{OO18} 
implies the following: after rescale with fixed diameters, 
in particular with bounded above distance of $s=0$ and 
$s=\infty$, the corresponding renormalized $\rho(r)$ in 
{\it loc.cit} uniformly converges to 
$0$ (after making $r$ bounded by rescale) so that even the full 
measure of the (rescaled) McLean metric also 
tends to $0$ for $i\to \infty$. 

Hence we obtain desired convergence to the interval with 
$0$ measure, as metric measure space, in the 
sense of e.g. \cite{Fuk.mGH}.

\end{Step}

\begin{Step}\label{Step2.}
This Step \ref{Step2.} provides the first step analysis of the ``maximally degenerate'' case 
when $c_{1}=3c_{2}=3$, and is borrowed from \cite{Osh}, 
which we follow and leave for the proof. (Our later steps  
are different from \cite{Osh}, with more algebro-geometric or 
non-archimedean perspectives.) 
We thank Y.Oshima for the permission to write also here. 
For each $i$, we define a cut-off function on $\R_{>0}$ as 
\[\varphi(r):=\begin{cases}
 -1 & r<\epsilon, \\
 \frac{\log r}{|\log \epsilon|} & \epsilon\leq r\leq \epsilon'^{-1}, \\
 1 & r >\epsilon'^{-1}\end{cases}
\]
Here, for each $j$, suppose  $\lim_{i\to \infty}
\varphi(|\chi_{j}|)=:x_j$ 
(the appearance of two indices $i, j$ are not typo as 
$j$ is fixed here while $\chi_{j}$ depends on $i$) 
which is negative for $j\le k$ and non-negative otherwise. 
In addition, we may assume that
 $- \frac{\log |D_{i}|}{|\log \epsilon_i|}\to d\in [0, +\infty]$, 
 where $D_{i}$ denotes the top coefficient of $\Delta_{24}$. 
Then \cite{Osh} determines the limit measure on 
the interval by using the approximate description of the 
McLean metric \cite[\S7.3.3, notably Lemma 7.16]{OO18}. 
The limit measure can be described as (up to positive 
constants multiplication) 
$V$ on $[-1,1]$ by 
\[V(w) = 12 w + d
 - \sum_{j=1}^k \max\{w, x_j\}
 - \sum_{j=k+1}^{24} \max\{0, w - x_j\}.
\]
and as in \cite{HSZ}, metric $d$ and measure $\nu$ on the interval 
$[-1,1]$ as 
\begin{align*}
&d=V(w)^{\frac{1}{2}} dw, \quad \nu=V(w)dw, \quad
 \text{if $V\not\equiv 0, +\infty$,} \\
&d=dw,\quad \nu=dw \quad \text{if $V\equiv 0$ (or $V\equiv +\infty$}).
\end{align*}
\begin{Lem}[\cite{Osh}, compare with \cite{HSZ}]\label{lim.measure.Osh}
For the given and fixed sequence of $(g_{8},g_{12})$, 
the underlying base $\PP^{1}$ with McLean metric of the 
Weierstrass elliptic K3 surfaces converges to 
the above $([-1,1],d,\nu)$ as the metric measure space, 
up to rescale. 
\end{Lem}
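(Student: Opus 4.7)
The plan is to work directly from the approximate description of the McLean metric on the base $\mathbb{P}^{1}$ of the Weierstrass fibration given in \cite[\S 7.3.3, Lemma 7.16]{OO18}, which expresses the associated volume density as essentially a second derivative of a potential of the schematic form $c\cdot\log(|g_{8}|^{3}+27|g_{12}|^{2})-\log|\Delta_{24}|$ with an explicit normalization. Since the degeneration at the GIT-polystable point $(3s^{4},s^{6})$ is to a segment (the maximally degenerate case), the limiting real coordinate is extracted from the radial direction $r=|s|$ on $\mathbb{P}^{1}$. I would first average over the natural $S^{1}$-action $s\mapsto e^{i\theta}s$, which reduces the volume form to a one-variable density in $r$, and then apply the coordinate change $w=\varphi(r)$ defined in Step \ref{Step2.}, which rescales the relevant logarithmic interval $[\epsilon_{i},\epsilon_{i}'^{-1}]$ linearly onto $[-1,1]$.

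Next, I would compute the limit density term by term after dividing by $|\log\epsilon_{i}|$. Each factor $\log|s-\chi_{j}|$ appearing in $\log|\Delta_{24}|$, after averaging over $S^{1}$ and rescaling, gives $\max\{\log r,\log|\chi_{j}|\}$, which converges to $\max\{w,x_{j}\}$ for $j\le k$ and to $\max\{0, w-x_{j}\}$ for $j>k$ (corresponding to roots that migrate to the left or right of $[-1,1]$ respectively). The leading coefficient $\log|D_{i}|$ of $\Delta_{24}$ contributes the additive constant $d$, while on the middle regime of $r$ the dominant term of $\log(|g_{8}|^{3}+27|g_{12}|^{2})$ is governed by the top monomial and produces the linear $12w$ slope (reflecting $\deg g_{12}=12$). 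Summing with the correct signs yields precisely the PL function $V$ stated. The identification of the metric $d=V^{1/2}dw$ and measure $\nu=V\,dw$ then follows from the general relation between the McLean metric and its density (cf.\ \cite{HSZ}).

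The main obstacle I expect is the \emph{uniformity} of convergence: the function $V$ is only piecewise linear with breakpoints precisely at the $x_{j}$, and near each $x_{j}$ the pointwise approximation $\log|s-\chi_{j}|\approx\max\{\log|s|,\log|\chi_{j}|\}$ fails on a neighborhood whose size must be shown to be negligible relative to $|\log\epsilon_{i}|$, so that the total contribution vanishes in the rescaled limit. Closely related is the verification that one obtains convergence in the \emph{measured} Gromov--Hausdorff sense of \cite{Fuk.mGH} rather than merely vague or weak convergence; this needs uniform diameter bounds and a covering argument controlling the $S^{1}$-fibres within the Luna slice neighborhood of $(3s^{4},s^{6})$. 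Finally, the degenerate regimes $V\equiv 0$ and $V\equiv +\infty$ must be handled separately, since in those cases the rescaling collapses further and the limit is the standard interval with trivial (or infinite) measure, consistent with the convention adopted just before the statement.
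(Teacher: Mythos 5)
You should first be aware that the paper does not actually prove this lemma: in Step \ref{Step2.} of the proof of Theorem \ref{continuity2} the statement is explicitly ``borrowed from \cite{Osh}'', with the proof left to that reference (the paper only records that \cite{Osh} obtains it from the approximate description of the McLean metric in \cite[\S 7.3.3, Lemma 7.16]{OO18}). So there is no in-paper argument to compare against; your sketch is a reconstruction along exactly the route the paper attributes to Oshima: take the explicit approximate potential involving $\log(|g_{8}|^{3}+27|g_{12}|^{2})$ and $\log|\Delta_{24}|$, average over the $S^{1}$-action so that each $\log|s-\chi_{j}|$ becomes $\max\{\log r,\log|\chi_{j}|\}$ (exact, by the mean value property), rescale by $|\log\epsilon_{i}|$ via the cut-off $\varphi$, and read off $V(w)=12w+d-\sum_{j\le k}\max\{w,x_{j}\}-\sum_{j>k}\max\{0,w-x_{j}\}$ together with $d=V^{1/2}dw$, $\nu=V\,dw$ as in \cite{HSZ}. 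This matches the stated formula, and you correctly identify where the real work lies: uniformity of the approximation near the breakpoints $x_{j}$, the collapse of the $S^{1}$-direction needed to upgrade weak convergence of pushforward measures to measured Gromov--Hausdorff convergence in the sense of \cite{Fuk.mGH}, and the degenerate cases $V\equiv 0,+\infty$.

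Two smaller points to fix if you carry this out. First, the slope $12$ in the middle regime comes from the limit pair $(3s^{4},s^{6})$: both $|g_{8}|^{3}$ and $27|g_{12}|^{2}$ grow like $r^{12}$ there, so the parenthetical attribution to $\deg g_{12}=12$ is a (harmless but misleading) slip. Second, beyond the breakpoint issue you mention, the regions $r\lesssim\epsilon_{i}$ and $r\gtrsim\epsilon_{i}'^{-1}$ are exactly where the middle-regime monomial approximation fails and where the geometry is governed by the ``end surfaces'' of Step \ref{End.surfaces} (cf.\ \cite[\S 7.3.2--7.3.3]{OO18}); you must show their contribution to both diameter and measure is negligible after the $|\log\epsilon_{i}|$-rescaling, so that they only affect the endpoint values of $V$ on $[-1,1]$ rather than producing extra limit pieces. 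With those points addressed, your outline is a faithful version of the argument the paper delegates to \cite{Osh}, whereas the paper's own contribution (Steps \ref{st.red.per}--\ref{ABE.V}) is the alternative, algebro-geometric identification of the limit via the stable reduction of Claim \ref{Max.degen.stable.reduction} and the Morgan--Shalen/Satake formalism, which takes this lemma as input.
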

\end{Step}

\begin{Step}\label{st.red.per}

We consider the normalized compact moduli $\overline{M_W}^{\rm ABE,\nu}$ 
and its stacky refinement $\overline{\mathcal{M}_W}^{\rm ABE,\nu}$ (a proper Deligne-Mumford algebraic stack) 
which comes from the construction of 
$\overline{\mathcal{M}_W}^{\rm ABE}$
in \S \ref{ABE.reconst.sec}  i.e.,  by the log KSBA moduli interpretation after \cite{ABE}. 

Take an \'etale chart $\overline{\mathcal{U}}$ of the stack $\overline{\mathcal{M}_W}^{\rm ABE,\nu}$ 
which contains the preimage of the 
$0$-cusp of $\overline{M_W}$. We denote the preimage of the open part $M_W$ as $\mathcal{U}\subset \overline{\mathcal{U}}$. 
Denote the corresponding coarse moduli as $U\subset \overline{U}$. 
Now we apply the Morgan-Shalen compactification as 
\cite[Appendix]{TGC.II} to $\mathcal{U}\subset \overline{\mathcal{U}}$ 
and denote it simply as $U\subset \overline{U}^{\rm MSBJ}$. 

As preparation, now we define the following modified Newton polygon of the discriminant $\Delta_{24}$ 
for a sequence of $(g_8,g_{12})$ with respect to $i=1,2,\cdots$ converging to $(3s^4,s^6)$. 
For $\Delta_{24}(s)=\sum_{j=1}^{24} d_j s^j$, 
we set 
\begin{align*}\label{arch.d.hull}
{\rm Newt}(\Delta_{24}):=\{(j,-\log|d_{j}|)\mid 0\le j\le 24\}+\R_{\ge 0}(0,1) 
\end{align*}
as an analogue of \eqref{d.hull} and modify it by using 
$\epsilon$, $\epsilon'$ of 
\eqref{e.e'.seq}, a sequence analogue of $e(0), e(\infty)$ (during the proof of Claim \ref{Max.degen.stable.reduction}),  
as follows: 
first we regard the above ${\rm Newt}(\Delta_{24})$ as a graph of PL convex function 
$\varphi^{\C}_{\Delta}\colon [0,24]\to \R\cup \{\infty\}$ and modification is defined below. 
We set similarly as before 
\begin{align}
i_{\epsilon}:=\max\{i\mid \varphi_{\Delta}(i)-\varphi_{\Delta}(i+1)\ge 
\epsilon\},
\end{align}
\begin{align}
i_{\epsilon'}:=\min\{i\mid \varphi_{\Delta}(i+1)-\varphi_{\Delta}(i)\ge 
\epsilon'\}. 
\end{align}
Again as before, we modify $\varphi^{\C}_{\Delta}$ to 
$\overline{\varphi}^{\C}_{\Delta}\colon [0, 24]\to \R\cup \{\infty\}$ as 
\begin{align}\label{PL.modif}
\bar{\varphi}^{\C}_{\Delta}(i):=
\begin{cases}
\varphi_{\Delta}^{\C}(i_{\epsilon})-\epsilon(i_{\epsilon}-i) & (\text{if }0\le i\le i_{\epsilon})  \\ 
\varphi_{\Delta}^{\C}(i) & (\text{if }i_{\epsilon}\le i\le i_{\epsilon'})   \\
\varphi_{\Delta}^{\C}(i_{\epsilon'})+\epsilon'(i-i_{\epsilon'}) & (\text{if }i_{\epsilon'}\le i\le 24). 
\end{cases}
\end{align}
We are actually only concerned about it modulo positive constant 
multiplication, but anyhow denote the graph of $\bar{\varphi}^{\C}
_{\Delta}$ as ${\rm  Newt'}(\Delta_{24})$. Note that, from the 
definition using 
the (archimedean) logarithm, 
the non-differentiable points in the domain is not necessarily 
integers. For instance, along any 
holomorphic punctured family of $(g_{8},g_{12})$ 
converging to $(3s^{4},s^{6})$, 
the obtained limit of the above ${\rm  Newt'}(\Delta_{24})$ 
modulo rescale (fixing the height) becomes 
our 
$P_{\Delta,0}$ in \eqref{P.Delta}, the epigraph of 
$\overline{\varphi}_{\Delta}$ in \eqref{PL.modif}. 
We can and do assume our sequence sits in a neighborhood 
$U''$ 
of $((3,\vec{0});(1,\vec{0}))$ 
in $\A^{22}=\A^{9}\times \A^{13}$ describing the coefficients of $g_{8}$s and $g_{12}$s
for each $i$. 
We consider the rational map from $U''$ to 
some (arbitrarily fixed) toroidal compactification 
$\overline{M_{W}}^{AMRT, \{\Sigma\}}$ and 
replace $U''$ by its blow up to make it a morphism. 
We denote the preimage of the boundary as $D''\subset U''$, 
and set $U''':=U''\setminus D''$. 

Now, 
we apply the functoriality of MSBJ construction 
\cite[Appendix \S A.2, A.15]{TGC.II} (more precisely, 
the analytic extension in 
\cite{SBB}), we obtain a continuous map 
$\overline{U'''}^{\rm MSBJ}(U'')\to \overline{M_{W}}^{\rm MSBJ}
(\overline{M_{W}}^{AMRT, \{\Sigma\}})$. 

From the previous Step \ref{Step2.}, 
the limit of ${\rm  Newt'}(\Delta_{24})$ for $i\to \infty$ 
decides the measured Gromov-Hausdorff limit of McLean metrics sequence 
\eqref{lim.measure.Osh}, which is metrically the interval. 
Thus, from the case-by-case proof of Claim 
\ref{Max.degen.stable.reduction} during that of 
Theorem \ref{ABE.stable.reduction}, 
above discussion readily implies that: 
\begin{Claim}
The measured Gromov-Hausdorff limit of McLean metrics sequence 
\eqref{lim.measure.Osh} is 
determined by the limit point inside the Morgan-Shalen type 
compactification 
$\overline{U'''}^{\rm MSBJ}(U'')$ (if exists).  
\end{Claim}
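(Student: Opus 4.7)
The plan is to show that the limit metric measure space from Lemma \ref{lim.measure.Osh} is determined, up to the diameter-one normalization, by the combinatorial datum $[{\rm Newt}'(\Delta_{24})]$ modulo positive rescale, and then to identify that datum with the MSBJ limit point in $\overline{U'''}^{\rm MSBJ}(U'')$.

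First, I would unpack the PL function $V(w)=12w+d-\sum_{j=1}^{k}\max\{w,x_j\}-\sum_{j=k+1}^{24}\max\{0,w-x_j\}$ from Step \ref{Step2.}. By the construction of $x_j$ and $d$, the tuple $(d;x_1,\ldots,x_{24})$ is precisely the rescaled graph of $\overline{\varphi}^{\C}_{\Delta}$ in \eqref{PL.modif}, with the normalization fixing the non-differentiable points to lie in $[-1,1]$. Thus $V$, up to a positive rescale absorbed by the diameter-one normalization, depends only on $[{\rm Newt}'(\Delta_{24})]$, so Lemma \ref{lim.measure.Osh} renders the measured Gromov-Hausdorff limit a function of this datum alone.

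Second, I would apply the MSBJ functoriality (\cite[A.2, A.15]{TGC.II}) to the resolved morphism $U''' \hookrightarrow U'' \to \overline{M_W}^{\rm AMRT,\{\Sigma\}}$. A point of $\overline{U'''}^{\rm MSBJ}(U'')$ records, up to positive rescale, the relative divergence rates $-\log|f|$ of every regular function $f$ on $U''$; in particular it determines $[{\rm Newt}(\Delta_{24})]$ together with the limit rates of $|\xi_j|$ and $|\eta_{j'}|$ from which $\epsilon,\epsilon'$ of \eqref{e.e'.seq} are formed. The case-by-case construction of the base $\mathcal{B}$ in Steps \ref{End.surfaces}--\ref{Real.end} of Case \ref{Case1} is governed by exactly this combinatorial data: the interior of the modified Newton polygon dictates the middle chain of $\mathbb{P}^1$s, while $\epsilon,\epsilon'$ dictate the two end truncations. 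Consequently two sequences sharing the same MSBJ limit produce the same $[{\rm Newt}'(\Delta_{24})]$ modulo positive rescale, and the conclusion follows from the previous paragraph.

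The main obstacle I anticipate is ensuring that the end truncation procedure \eqref{PL.modif} is truly intrinsic to the MSBJ limit point and not merely an artefact of the auxiliary toroidal resolution $\overline{M_W}^{\rm AMRT,\{\Sigma\}}$. I expect this to be resolved by observing that $\epsilon,\epsilon'$ are computable from ratios of $-\log|\xi_j|$ and $-\log|\eta_{j'}|$, which are themselves canonically defined rates on $U'''$; granted this, MSBJ functoriality produces a well-defined continuous map from $\overline{U'''}^{\rm MSBJ}(U'')$ into the space of PL functions $V$ modulo positive rescale, and composing with the metric-measure assignment of Step \ref{Step2.} gives the claim.
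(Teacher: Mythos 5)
Your proposal is correct and follows essentially the same route as the paper: it reduces the claim to the fact (Step \ref{Step2.}, Lemma \ref{lim.measure.Osh}) that $V$ is a function of the limit of the modified Newton polygon ${\rm Newt'}(\Delta_{24})$ modulo positive rescale, and then uses the MSBJ functoriality map together with the case-by-case stable reduction analysis of Claim \ref{Max.degen.stable.reduction} to see that the MSBJ limit point determines exactly this combinatorial datum. Your extra remark that $\epsilon,\epsilon'$ are recovered from ratios of the divergence rates $-\log|\xi_j|$, $-\log|\eta_{j'}|$ is a welcome explicit justification of a step the paper passes over with ``readily implies.''
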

\end{Step}


\begin{Step}\label{ABE.V}
If we consider the set of points of the boundary 
$\partial \overline{U'''}^{\rm MSBJ}(U'')$, whose (given integral) 
affine coordinates valued in $\Q$, it is obviously dense. 
On the other hand, 
recall from the previous Step \ref{st.red.per} 
that there is a natural continuous map 
$\overline{U'''}^{\rm MSBJ}(U'')\to \overline{M_{W}}^{\rm MSBJ}
(\overline{M_{W}}^{AMRT, \{\Sigma\}})$. 
Hence, it is enough to show the following claim: 

\begin{Claim}\label{V.determine}
For any point $p \in 
\partial \overline{U'''}^{\rm MSBJ}(U'')$ with rational affine 
coordinates, 
if we describe its image in 
$\overline{M_{W}}^{\rm MSBJ}
(\overline{M_{W}}^{AMRT, \{\Sigma\}})$ 
as $\bar{l}:=\R l$ with 
$(0 \neq) l=l(p)\in C^{+}(\Lambda_{\rm seg})\cap 
\Lambda_{\rm seg}\otimes \Q$ (we also denote $\bar{l}=\overline{l(p)}$) 
the limit measure density function $V$ (\cite{HSZ}, our previous 
Step \ref{Step2.}) for some sequence in 
$M_{W}$ 
converging to $p$, 
coincides with $\tilde{\Phi}(\overline{l(p)})$. 
\end{Claim}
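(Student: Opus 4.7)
The plan is to prove Claim \ref{V.determine} by reducing the general (sequential) case to a holomorphic one-parameter degeneration and matching two separate computations of the associated PL function: the measure-theoretic one of Step \ref{Step2.} (Lemma \ref{lim.measure.Osh}) and the integral-affine-sphere/polygon description of $V(l)$ from \cite[\S 7A]{ABE} recalled in item \eqref{open.part.realize} of Definition \ref{V.def}.

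First, because $p \in \partial \overline{U'''}^{\rm MSBJ}(U'')$ has rational affine coordinates, one can realize $p$ as the limit of an algebraic punctured one-parameter family, i.e.\ a morphism $\Spec K((t)) \to M_W$, given by a pair $(g_{8}(s,t), g_{12}(s,t))$ of polynomials with coefficients in $K[[t]]$, whose reduction at $t=0$ lies over the $0$-cusp $M_{W}^{nn,\mathrm{seg}}$. Applying Claim \ref{Max.degen.stable.reduction} together with the case-by-case construction inside the proof of Theorem \ref{ABE.stable.reduction}, we get the canonical stable model $(\mathcal{X}, \mathcal{R}) \to \mathcal{B}$ over $K[[t]]$ whose central fiber is one of the stable types $\E \A \cdots \A \E$, $\E \A \cdots \A \D$, or $\D \A \cdots \A \D$ listed earlier. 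By construction, all of the combinatorial data (the chain of $\PP^1$'s in $\mathcal{B}|_{t=0}$ together with the positions and multiplicities of the real discriminant points) is read off from the modified Newton polygon $\overline{\varphi}_{\Delta}$ of $\Delta_{24}$ introduced in Step \ref{Step.middle}.

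Second, I would compare this algebraic picture with the measure-theoretic formula of Lemma \ref{lim.measure.Osh}. In the holomorphic setting, the archimedean modified polygon $\overline{\varphi}^{\C}_{\Delta}$ of Step \ref{st.red.per} degenerates (after fixing a normalization by rescaling the height) to exactly the PL function $\overline{\varphi}_{\Delta}$ of \eqref{PL.modif}, whose epigraph is the polytope $P_{\Delta,c}$ defining the toric model $\mathcal{B}$. Plugging the limiting $t$-adic valuations $x_{j} = \lim_{i\to\infty}\varphi(|\chi_{j}|)$ into the Oshima formula
\begin{align*}
V(w) \;=\; 12w + d \;-\; \sum_{j=1}^{k}\max\{w, x_j\}\;-\;\sum_{j=k+1}^{24}\max\{0, w - x_j\}
\end{align*}
from Lemma \ref{lim.measure.Osh}, one sees that $V$ is precisely the PL function whose graph is the lower boundary of $\overline{\varphi}_{\Delta}$ up to an affine change of variable $w \leftrightarrow s$. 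Thus $V$ is determined intrinsically by the stable reduction, and a fortiori by the MSBJ limit point $p$.

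Third, I would identify this $V$ with the ABE function $\tilde{\Phi}(\overline{l(p)})$ of item \eqref{open.part.realize}. The isomorphism \eqref{MSBJ..Sat} between $\overline{\mathcal{M}_{W}}^{\rm MSBJ}$ and $\overline{\mathcal{M}_{W}}^{\rm Sat, adj}$ sends $p$ to a ray $\R_{>0}\,l$ with $l \in C^{+}(\Lambda_{\rm seg})$, and its coordinates in the Vinberg fundamental chamber $P$ along the simple roots $\alpha_{i}$ are, up to a fixed linear dictionary, exactly the positions of the real discriminants on the chain of $\PP^{1}$s in $\mathcal{B}|_{t=0}$ (this dictionary is essentially the content of \cite[Proposition 7.45]{ABE} and already implicit in the fact that the normalization of $\overline{M_{W}}^{\rm ABE}$ is the toroidal compactification with respect to $\Sigma_{\rm rc}$). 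Because the ABE polygon $P_{LR}(l)$ is by definition the piecewise-linear envelope of exactly the same $24$ data points (with the same slope-$12$ affine backbone), we get $V(l) = V$, completing the identification.

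The main obstacle will be in this last paragraph: matching normalizations between (i) the $t$-adic rescaling $s/t^{e(0)}$ introduced in Step \ref{End.surfaces}, (ii) the affine-linear coordinate on the interval $[0,1]$ (or $[-1,1]$) appearing in the ABE and Oshima formulas, and (iii) the Vinberg chamber coordinates on $C^{+}(\Lambda_{\rm seg})$. The verification that these three scales agree up to the claimed equivalence class (positive rescaling of $\nu$) requires tracking the construction through the modifications \eqref{PL.modif} and through Steps \ref{Step.1.3} and \ref{Real.end}; however, once one observes that each of (i)--(iii) arises as the intrinsic PL datum attached to the same set $\{\chi_{1}, \dots, \chi_{24}\}$ of real discriminants (with their $t$-adic orders of vanishing or coalescence), the matching reduces to a formal comparison of two convex PL functions with the same breakpoints and the same slope at $\pm\infty$, hence the identification follows.
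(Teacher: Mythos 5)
Your overall architecture (use the rationality of the coordinates of $p$ to pass to an algebraic one-parameter family, compute the limit density from the stable reduction and the Newton polygon via Lemma \ref{lim.measure.Osh}, then match with the polygon that \cite{ABE} attaches to $l$) runs parallel to the paper's argument, but the decisive step is exactly the one you leave as an assertion. The whole content of Claim \ref{V.determine} is the bridge between the lattice-theoretic datum $\bar{l}=\overline{l(p)}\in C^{+}(\Lambda_{\rm seg})$, which is \emph{defined} as the image of $p$ under the MSBJ map built from an arbitrary toroidal compactification $\overline{M_{W}}^{AMRT,\{\Sigma\}}$, and the configuration of real discriminant points appearing in the stable reduction of your one-parameter family. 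In your third paragraph you invoke a ``fixed linear dictionary'' between the Vinberg chamber coordinates of $l$ and the discriminant positions, citing \cite[Proposition 7.45]{ABE}; but that proposition concerns the identification of the normalization of $\overline{M_{W}}^{\rm ABE}$ with the toroidal compactification for $\Sigma_{\rm rc}$, and by itself it does not tell you that the MSBJ image of $p$ is the ray corresponding to that dictionary datum. The paper supplies precisely this missing bridge by a monodromy argument: \cite[Theorem 2.8, Corollary 4.25]{OO18} identify the limit inside $\overline{M_{W}}^{\rm MSBJ}(\overline{M_{W}}^{AMRT, \{\Sigma\}})$ with the monodromy on $U^{\perp}$, and \cite[Corollary 7.33]{ABE} (proved via the Engel--Friedman diffeomorphism to Symington-type Lagrangian fibrations and intersection-number computations) shows that the monodromy invariant of the Kulikov family $X_{LR}(l)$, pulled back to a curve transversal to the divisor through $p$, is exactly $l$ modulo $O(\Lambda_{\rm seg})$. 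Without this, or some substitute for it, your concluding ``we get $V(l)=V$'' restates the claim rather than proving it.

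A secondary inaccuracy: Oshima's $V$ is not ``the lower boundary of $\overline{\varphi}_{\Delta}$ up to an affine change of variable.'' The breakpoints of $V$ are the limiting logarithmic positions $x_{j}$ of the discriminant points, i.e.\ the \emph{slopes} of the modified Newton polygon, while the slopes of $V$ count discriminant points, i.e.\ the horizontal edge lengths of that polygon; the two PL functions are Legendre dual to each other, not affinely equivalent. This does not invalidate your second paragraph, since the Newton polygon still determines $V$, which is all you actually use there, but the same conflation of the polygon $P_{\Delta,c}$ of \eqref{P.Delta} with the polygon $P_{LR}(l)$ of \cite[\S 7A]{ABE} reappears in your final matching step and should be replaced by the duality statement before the ``formal comparison of two convex PL functions'' can be carried out.
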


To prove the Claim \ref{V.determine}, recall that \cite[Theorem 1.2]{ABE} shows that the 
normalization of the log KSBA compactification of the 
Weierstrass elliptic K3 surfaces with their ``(weighted) rational 
curves cycle'' 
type boundaries is the toroidal compactification (\cite{AMRT}) 
with respect to the rational curves cone. 
As its first step, they construct, 
for given $(0 \neq) l\in C^{+}(\Lambda_{\rm seg})\cap 
\Lambda_{\rm seg}\otimes \Q$, 
a certain Kulikov model $X_{LR}(l)$ (and its flop 
$X'_{LR}(l)$ after a base change). 
For $l$, 
we take such models as the one in Claim \ref{V.determine}. And 
one can assume the image of $t$ in $\Delta^{*}$ 
converges to $p$ for $t\to 0$. Indeed, 
we can take $X_{LR}(l)$ to be the 
pull back of the Kulikov (semistable) model family, 
constructed 
in \cite{ABE}, to a generic analytic curve 
transversally intersecting 
the open strata of the prime divisor of $U''$ corresponding to $p$ 
(if such divisor does not exist, we simply replace $U''$ by 
blow up satisfying it). 
Then {\it loc.cit} showed 
that its monodromy invariant (cf., e.g., \cite{FriS}) 
is nothing but $l$ modulo $O(\Lambda_{\rm seg})$ 
in Corollary 7.33 {\it loc.cit}. It is done 
using the crucial diffeomorphism from degenerating elliptic 
K3 surface to  
a corresponding Symington type 
Lagrangian fibration by bare hand \cite{EF} 
and then calculating the intersection numbers on 
the Lagrangian fibration side. 
Recall from \cite[Theorem 2.8, Corollary 4.25]{OO18} that 
the limit inside MSBJ compactification 
$\overline{M_{W}}^{\rm MSBJ}
(\overline{M_{W}}^{AMRT, \{\Sigma\}})$
is equivalent to the information of the monodromy on 
$U^{\perp}$ of signature $(2,18)$. 

For each $X_{LR}(l)$ as above, one can directly see the limit 
measure density function by our previous Steps combined with 
the case-by-case explicit 
proof of Claim \ref{Max.degen.stable.reduction}, 
and coincides with 
$\tilde{\Phi}(\bar{l})$ which is determined by the monodromy. 
Hence, it is determined by the limit inside $\overline{M_{W}}^{\rm MSBJ}
(\overline{M_{W}}^{AMRT, \{\Sigma\}})$ 
by \cite[Theorem 2.8, Corollary 4.25]{OO18} and 
the claim \ref{V.determine} for general sequence, 
the desired coincidence (Theorem \ref{continuity2}) finally follows. 
\end{Step}
\end{proof}

\subsection{Explicit description and examples}

Recall that, in particular, 
$\tilde{\Phi}(l)$ in case \eqref{open.part.realize} of 
Definition \ref{V.def} is as follows,   
as \cite[\S 7A]{ABE}, \cite{Osh}, which describes all the 
details from which we borrow.  
The fundamental polygon $P$ is divided into $9=3^{1+1}$ 
maximal chambers, say $\{P'_{a}\}_{a}$, and 
the points of $[0,1]$ where $(\tilde{\Phi}(l))(0)$ is non-differentiable 
can be written as 
\begin{align}
0=\frac{q_{-2}}{q_{22}}= \frac{q_{-1}}{q_{22}}= \frac{q_{0}}{q_{22}}
\le 
\frac{q_{1}}{q_{22}}\le \cdots \le 
\frac{q_{19}}{q_{22}}
\le \frac{q_{20}=q_{21}=q_{22}}{q_{22}}=1.
\end{align}

The definitions also imply 
\begin{align}
\frac{q_{1}}{q_{22}}
=\max \biggl\{(l,-\frac{1}{3}\beta_{L}), 0\biggr\},
\end{align}
with $\beta_{L}\in \Lambda_{\rm seg}$ (see \cite[\S 4C]{ABE}) and 
every $q_{j}$ are linear at each $P'_{a}$ with respect to the 
description \eqref{Cplus}. 

The values and slopes of the function satisfy 
\begin{align}
(\tilde{\Phi}(l))(0)=\max \{(l,\beta_{L}),0\},
\end{align}
\begin{align}
\dfrac{d \tilde{\Phi}(l) (x)}{dx}=9-i \text{ for any } x \in 
(\frac{q_{i}}{q_{22}},\frac{q_{i+1}}{q_{22}}). 
\end{align}

In particular, $\tilde{\Phi}(l)$ is convex. 
Indeed: 
\begin{itemize}
\item 
if $(l,\beta_{L})\le 0$, for generic $l$ under such assumption, 
$\tilde{\Phi}(l)(0)=0$ and 
the slope of $\tilde{\Phi}(l)$ starts with $9$ and decrease by $1$ at 
each wall crossing through $q_{j}$. 
\item 
if $(l,\beta_{L})\ge 0$, then for generic $l$ under such assumption, 
the slope of $\tilde{\Phi}(l)$ starts with $8$ and decrease by $1$ at 
each wall crossing through $q_{j}$. 
\end{itemize}
In the case $\tilde{\Phi}(l)(0)=\tilde{\Phi}(l)(1)=0$ (e.g., \S \ref{HSVZ.sec}), 
then note that the barycenter of 
$q_{i}$ is the middle point $\frac{1}{2}$. 
The behaviour of the function 
$(\tilde{\Phi}(l))$ around the opposite end $1$ (denoted by $R$ in 
\cite{ABE}) is completely similar.

\begin{Rem}[{Relation with \cite[\S 5]{CM05}}]
For one parameter Type III degenerations from $M_{W}$ to the 
locus inside the closure of $M_{W}^{\rm nn}$, 
we expect that 
the corresponding limit point in $M_{W}(d)^{\tau}$ 
can be explained by the collision of $18$ 
blow up centers $p_{i}$s for the stable type II degeneration 
of those elliptic K3 surfaces introduced in \cite[\S 5]{CM05}. 
For the combinatorial type of such type III degenerations, recall 
Corollary~\ref{II.III.}. 
\end{Rem}

\begin{Ex}[Via Davenport-Stothers triple]\label{Zensha.ex}

Here we see simple examples of degenerating Weierstrass elliptic K3 
surfaces 
and apply above to obtain the limit measures of the family of 
McLean metrized spheres. 

In the following two examples, let us denote 
\begin{align}
g_{4}(s):=3(s^{4}+2s),
\end{align}
\begin{align}
g_{6}(s):=s^{6}+3s^{2}+\frac{3}{2},
\end{align}
so that
\begin{align}
g_{4}^{3}-27g_{6}^{2}=-27(s^{3}+\frac{9}{4}).
\end{align}
Up to affine transformation, this is known to be the 
only pair of degree $4$, degree $6$ polynomials 
with the degree of $g_{4}^{3}-27g_{6}^{2}$ is $3$. 
It is an easy example of ``Davenport-Stothers triple'' 
(cf., e.g., \cite{Dav},\cite{Sto},\cite{Zan},\cite{Shi}).

Our first example is as follows: 
\begin{align}
g_8(s):=g_4\biggl(\frac{s}{t}\biggr)g_4\biggl(\frac{1}{ts}\biggr)s^4,
\end{align}
\begin{align}
g_{12}(s):=g_{6}\biggl(\frac{s}{t}\biggr)
g_{6}\biggl(\frac{1}{ts}\biggr)s^6
\end{align}
for $t\to 0.$
Then we see that the density function $V$ of the limit 
measure of the tropical K3 surfaces is 
as follows (modulo rescale): 

\begin{align}
V(a)= \left\{
\begin{array}{ll}
a & 0\le a\le \frac{1}{2}\\
1-a & \frac{1}{2}\le a\le 1,\\
\end{array}
\right.
\end{align}
which is directly checkable after our arguments in \S 
\ref{ABE.reconst.sec} 
and \cite[\S 7A]{ABE}. 
\end{Ex}

\begin{Ex}[Via Davenport-Stothers triple again]\label{Kousha.ex}

We use the same $g_{4}, g_{6}$ as above Ex~\ref{Zensha.ex} 
while construct different $g_{8}, g_{12}$s. 
Note
$$\biggl(s+\frac{1}{s}\biggr)^{-1}=\frac{s}{(s^2+1)},$$
$(s+\frac{1}{s})^{-1}$ is near $0$ if and only if  $s$ is near $0$ or $\infty$. 
Thus $(s+\frac{1}{s})^{-1}$ is near $\infty$ if and only if  $s$ is near 
$\sqrt{-1}$. 
In this example, we define $g_{8}, g_{12}$ as follows: 
$$g_8(s):=g_4\biggl(\frac{s}{t(s^2+1)}\biggr)\cdot (s^2+1)^4,$$
$$g_{12}(s):=g_6\biggl(\frac{s}{t(s^2+1)}\biggr)\cdot (s^2+1)^6.$$

Then $$\Delta_{24}(s)=g_{8}^{3}-27 g_{12}^{2}=0 \in 
\mathcal{O}_{\mathbb{P}^{1}}(24)|_{s}$$
if and only if 
$$\frac{s}{t(s^2+1)}  = \chi_i (i=1,2,3)$$
or $$\frac{s}{(s^2+1)} = \infty$$ with multiplicity $18$
if and only if  
\begin{align}\label{eqn}
s+\frac{1}{s} = (t\chi_i) ^{-1} (i=1,2,3)
\end{align}
or 
\begin{align}\label{eqn2}
s+\frac{1}{s} = 0
\end{align}
where, the latter with multiplicities $18$. 
The former \eqref{eqn} happens 
if and only if  
$$s=\frac{1\pm \sqrt{1-4t^{2}\chi_{i}^{2}}}{2}$$ 
and the latter happens when either $s=\sqrt{-1}$ with the multiplicity $9$ or $s=-\sqrt{-1}$ with the multiplicity $9$ again. 
Therefore, if we $t\to 0$, we get $[0,1]$ with the corresponding $V$ (modulo rescale) as 
same again: 

$$V(a)= \left\{
\begin{array}{ll}
a & 0\le a\le \frac{1}{2}\\
1-a & \frac{1}{2}\le a\le 1.\\
\end{array}
\right.
$$
\end{Ex}
In next \S \ref{HSVZ.sec}, we observe that above 
two cases are 
close to the direction of collapsing of \cite{HSVZ}. 

\begin{Ex}[Simplest D type]
On the other hand, as another simple our instance of our 
discussion in the proof of Theorem \ref{ABE.stable.reduction}, 
we obtain a different type of $V$ with $V(0)=V(1)\neq 0$. 

Set 
\begin{align*}
& g_{8}(s)=3(
(s-ta_{1})(s-ta_{2})
(ts-a_{3})(ts-a_{4})
)^{2}, \\ 
& g_{12}(s)=(
(s-ta_{1})(s-ta_{2})
(ts-a_{3})(ts-a_{4})
)^{3},\\ 
\end{align*}
for $a_{1}\neq a_{2}$, $a_{3}\neq a_{4}$, all lie in $K$. 
Then, the Newton polygon of $\Delta_{24}$ has only two 
slopes, so that the proof (Case 1, 2) of 
Theorem \ref{ABE.stable.reduction} 
shows the corresponding $V$ for $t \to 0$ is 
a constant function. 

Indeed, this is the simplest prototypical example of 
D type degeneration of elliptic K3 surfaces. 
\end{Ex}
From the definition \ref{V.def} of our $\tilde{\Phi}$, 
and compare with \cite[\S 7A]{ABE} or \cite{Osh}, 
Theorem \ref{ABE.stable.reduction} ensures that 
$V$ can have much more varieties in general. 



\section{Limits along Type II degeneration and associated lattices}\label{Alg.limits.sec}

As claimed in our introduction, 
we are now ready to give general considerations on 
limits along $\mathcal{F}_{2d}$ to make sense of 
the $V$ function for type II degenerations. 
Suppose we are in the repeated setup as 
$(\mathcal{X},\mathcal{L})\to \Delta$ in $\mathcal{F}_{2d}$ 
is a type II polarized degeneration of K3 surfaces, 
dominated by a Kulikov model $\tilde{\mathcal{X}}$ and the 
pull back $\tilde{\mathcal{L}}$ of $\mathcal{L}$ to 
$\tilde{\mathcal{X}}$, and a stable type II degeneration 
$\mathcal{X}_0=V_0\cup V_1$. Then, refining Lemma \ref{II.lem}, the following holds. 
\begin{Prop}\label{lim.tau}
For the given $\pi\colon (\X,\mathcal{L})\to \Delta$ as above, 
the naturally associated continuous map 
$\varphi^{o}$ from $\Delta\setminus 0$ to 
$\mathcal{M}_{K3}$ continuously extends to a map 
$\varphi$ from 
$\Delta$ with $\varphi(0)=c(\X,\mathcal{L})$
in $\mathcal{M}_{\rm K3}(d)^{\tau}
\subset \overline{\mathcal{M}_{\rm K3}}^{\rm Sat,\tau}$. 
In other words, the limit point inside 
$\overline{\mathcal{M}_{\rm K3}}^{\rm Sat,\tau}$ 
for $t\to 0$ is well-defined. In particular, 
there is the well-defined function $V=V_{\pi}=V(\X,\mathcal{L}):=
\tilde{\Phi}(c(\X,\mathcal{L}))$ on the segment 
for this $(\X,\mathcal{L})$ as we noted in the beginning of the paper. 
\end{Prop}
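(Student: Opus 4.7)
The plan is to refine the proof of Lemma \ref{II.lem} by exploiting the polarization class as additional data that specifies a point within the $17$-dimensional stratum $\mathcal{M}_{\rm K3}(d)^{\tau}$, not merely the $0$-dimensional stratum $\mathcal{M}_{\rm K3}(d)$ of the adjoint Satake compactification. Throughout, I would work with a fixed monodromy-invariant marking of $(\X,\mathcal{L})\to\Delta$. Since the polarization $\mathcal{L}$ extends across the central fiber, $c_1(\mathcal{L}_t)$ is constant (under the marking) as an element of $\Lambda_{\rm K3}$, and the Type II local monodromy fixes it.

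First, by Lemma \ref{II.lem}, as $t\to 0$ the period plane $P(t):=\langle[\re\Omega_{X_t}],[\im\Omega_{X_t}]\rangle$ converges, in the Grassmannian of isotropic planes, to an isotropic $2$-plane $p\subset\Lambda_{\rm K3}\otimes\R$, giving an image in $\mathcal{M}_{\rm K3}(d)\subset\overline{\mathcal{M}_{\rm K3}}^{\rm Sat,adj}$. I would then lift to $\overline{\mathcal{M}_{\rm K3}}^{\rm Sat,\tau}$: because $c_1(\mathcal{L}_t)$ is orthogonal to $P(t)$ for all $t\neq 0$, in the limit it lies in $p^\perp$, and since $c_1(\mathcal{L})^2=2d>0$ while $p$ is isotropic, its image $\bar c_1(\mathcal{L})\in \Lambda_{\rm seg}=p^\perp/p$ lies in $C^+(\Lambda_{\rm seg})$ of \eqref{Cplus}. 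This produces a well-defined element of $O(\Lambda_{\rm seg})\backslash C^+(\Lambda_{\rm seg})/\R_{>0}\cong \mathcal{M}_{\rm K3}(d)^{\tau}$, with the $O(\Lambda_{\rm seg})$-quotient absorbing the residual marking ambiguity coming from the admissible choice of $p$ inside $\Lambda_{\rm K3}$. Set $c(\X,\mathcal{L})$ to be this class; it is intrinsic to $\pi$.

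Second, for continuity of $\varphi$ at $0$, I would recall that a Satake compactification is characterized by the asymptotic behavior of highest-weight matrix coefficients of $\tau$. The representation $\tau$ was chosen precisely so that its highest weight is orthogonal to exactly the leftmost Dynkin node; concretely, this highest weight vector pairs with a period point to read off (in the limit) the projection of $c_1(\mathcal{L})$ to $p^\perp/p$. Continuity of the classical period map $\varphi^o$ on $\Delta\setminus\{0\}$, combined with the constancy of $c_1(\mathcal{L}_t)$ in the fixed marking, then yields convergence of the relevant matrix coefficients to those attached to $\bar c_1(\mathcal{L})$, which is the very definition of convergence in $\overline{\mathcal{M}_{\rm K3}}^{\rm Sat,\tau}$. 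The last sentence of the proposition follows from Definition \ref{V.def} applied to $c(\X,\mathcal{L})\in \mathcal{M}_{\rm K3}(d)^{\tau}$.

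The hard part is the precise identification in the second step, namely matching the abstract $\tau$-Satake boundary coordinate with the concrete class $\bar c_1(\mathcal{L})\in\Lambda_{\rm seg}$. This is the content of the specific choice of $\tau$ referenced in \S\ref{measure.decide}, and reduces to a weight-space computation in the Lie algebra $\mathfrak{so}(3,19)$: one identifies the parabolic stabilizing the flag $0\subset p\subset p^\perp\subset \Lambda_{\rm K3}\otimes\R$, checks that the unipotent radical acts trivially on the relevant weight space, and verifies that this weight space is naturally $p^\perp/p\otimes\R\cdot v_{\tau}$, where $v_\tau$ is the highest weight vector. Once this is done, everything else is routine, and in particular independence from the choice of Kulikov model $\tilde{\X}$ follows because different Kulikov models alter $p$ only by an element of $O(\Lambda_{\rm seg})$ already quotiented out.
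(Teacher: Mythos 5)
Your proposal is correct and follows essentially the same route as the paper: the paper's (very short) proof likewise fixes a marking and observes that $\varphi^{o}(t)$ converges to the image of the K\"ahler/polarization class $\alpha(c_{1}(\mathcal{L}|_{\X_{1}}))$, which is exactly your identification of $c(\X,\mathcal{L})$ with the class of $\bar c_{1}(\mathcal{L})$ in $C^{+}(\Lambda_{\rm seg})$ modulo $O(\Lambda_{\rm seg})$ and rescaling. The extra representation-theoretic detail you supply about the $\tau$-Satake boundary is an elaboration of structure the paper takes as given (deferring to \cite{Osh}), not a different argument.
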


\begin{proof}
The proof is easy as Lemma \ref{II.lem}, as through 
a marking $\alpha$ of $H^{2}(\X_{1},\Z)$, 
$\varphi^{o}(t)$ clearly converges to the 
image of the K\"ahler class $\alpha(c_{1}(\mathcal{L}|_{\mathcal{X}
_{1}}))$ 
for $t\to 0$. 
\end{proof}
We remark that in the collaboration with Oshima, 
the above limit is expected to describe the limit measure and more 
generally 
$\tilde{\Phi}$ to be continuous on 
whole $\mathcal{M}_{K3}\sqcup \mathcal{M}_{\rm K3}
(d)^{\tau}
(\subset \overline{\mathcal{M}_{\rm K3}}^{\rm Sat,\tau})$ with respect to the 
{\it measured} Gromov-Hausdorff topology so that the 
above $V_{\pi}$ determines the limit measure of the hyperK\"ahler metrics on general fibers. \cite{Osh} provides 
related discussions. 

Furthermore, we take a marking 
$H^{2}(\mathcal{X}_{1},\Z)\simeq \Lambda_{\rm K3}$ 
so that the corresponding isotropic plane is 
$$\Z e''\oplus \Z e'$$
and we denote the image of $c_{1}(\mathcal{L}|_{\X_{1}})$ as 
$v_{2d}$ of norm $2d$. 
Recall the canonical isomorphism 
$$\langle e'', e' \rangle ^{\perp}/\langle e'', e' \rangle
\simeq \Lambda_{\rm seg}=II_{1,17}
\simeq U\oplus E_{8}^{\oplus 2}.$$
We write $\langle e'', e'\rangle =:p$. 
Then, $v_{2d}^{\perp} \subset p^{\perp}/p$ is 
studied classically in e.g. \cite{Fri84}, 
which we denote as 
$\Lambda_{\rm per}(c)=\Lambda_{\rm per}(c(\X,\mathcal{L}))$. 

\vspace{2mm}
As a hyperK\"ahler rotated side, 
we take a type III degeneration $\mathcal{X}^{\vee}\to \Delta$ of 
Weierstrass elliptic K3 surfaces which 
we suppose to be Kulikov degeneration, 
i.e., $(\mathcal{X}^{\vee},\mathcal{X}^{\vee}_{0})$ is 
log smooth and is minimal. 
We put a marking on the smooth fibers so that the 
elliptic fiber class is $e''$ and the zero-section class is 
$f''$. Recall that from 
\cite[\S7]{ABE}, 
an irreducible decomposition of 
$\mathcal{X}^{\vee}_{0}$ which we write as 
$\cup_{i}V^{\vee}_{i}$ satisfies each $V_{i}$ (or its pair) 
are either of the following forms: 
\begin{itemize}
\item $XI\cdots IX$, 
\item $Y_{2}Y_{a}I\cdots IX$, 
\item $Y_{2}Y_{a}I\cdots IY_{2}Y_{a}$. 
\end{itemize}
$\mathcal{X}^{\vee}\to \Delta$
It is easy to confirm that 
after appropriate flops, we can and do assume that 
the non-toric component (i.e., those with positive charges) 
all remains at the stable model of \cite{ABE}. 
Then, such remaining 
rational surfaces $V_{i}$ 
with normal crossing boundary $\cup_{j}D_{i,j}$ 
and are encoded as slightly generalized root lattice 
of type either $\D\A\cdots\A\D$, $\D\A\cdots\A\E$, 
$\E\A\cdots\A\E$ with possibly indices $0$s. 
This is encoded in {\it loc.cit} 
as $P_{LR}(l)$ (resp., piecewise linear function $V$). 
We denote such lattice as $\Lambda_{\rm ABE}(\mathcal{X}^{\vee})$. 
Note that its rank is generally $0$ and at most $17$. 
On the other hand, as a hyperK\"ahler rotation 
of $(\X^{\vee}_{t},\omega^{\vee}_{t})$ 
with $[\omega^{\vee}_{t}]= m_{t}e''+f''$ with $m_{t}\to \infty$, 
we set $\{(\X_{t},\omega_{t})\}_{t}$ of type II 
for $t\to 0$ (as in \cite[\S 4]{OO18}). 
We anyhow denote the limit 
inside the Satake compactification $\overline{\mathcal{M}_{K3}}
^{\rm Sat,\tau}$ formally as $c(\X,\mathcal{L})$. 
Then, the following holds. 

\begin{Prop}\label{2lattices}
In the above setup, the two associated negative definite lattices has 
canonical inclusion which respects the bilinear forms: 
$$\Lambda_{\rm ABE}(\mathcal{X}^{\vee})
\subset \Lambda_{\rm per}(c(\X,\mathcal{L})).$$
\end{Prop}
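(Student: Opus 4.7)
The plan is to realize both lattices as sublattices of the common $H^2(\mathcal{X}_t^\vee,\Z) \simeq \Lambda_{\rm K3}$ of a nearby smooth fibre via the given marking, and to verify the inclusion on the nose; hyperk\"ahler rotation preserves this abstract integral lattice, so both sides live in one and the same abelian group and the task reduces to checking orthogonalities.

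First I unpack $\Lambda_{\rm ABE}(\mathcal{X}^\vee)$. Each stable component $V_i^\vee\subset\mathcal{X}_0^\vee$ of type $\D$, $\A$, or $\E$ carries its (slightly generalized) root lattice $R_i$, canonically realized as the orthogonal complement of the fibration class (and, for $\E$-components, of the zero-section class) inside $\mathrm{Pic}(V_i^\vee)$: via the $K$-perpendicular in the Picard of a blow-up of $\PP^2$ for $\E_k$, and via the Hirzebruch-surface descriptions of \S\ref{A.surf}, \S\ref{D.surf} for $\A$ and $\D$. Compatibility across the double curves $D_{i,j}$ is encoded in the gluing lattice computed in \cite[\S 7]{ABE}. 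Using the Clemens--Schmid sequence for the Kulikov model $\tilde{\mathcal{X}}^\vee$, these classes lift compatibly to produce a primitive isometric embedding $\Lambda_{\rm ABE}(\mathcal{X}^\vee)\hookrightarrow H^2(\mathcal{X}_t^\vee,\Z)$ whose image is, by construction, orthogonal both to $e''$ and to $f''$ on the type III side.

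Second, I transport to the type II side. Since hyperk\"ahler rotation fixes $H^2(-,\Z)$ as an integral lattice, the embedded sublattice remains inside $\Lambda_{\rm K3}$ after rotation; only the Hodge structure changes. On the rotated side, the type II monodromy produces the isotropic plane $p=\Z e''\oplus\Z e'$, where $e''$ is already monodromy-invariant from the type III side and $e'$ is the additional vanishing cycle introduced by the type II direction. Following the explicit rotation recipe of \cite[\S 4]{OO18}, one identifies $e'$ modulo $e''$ with the projection of $f''$ into $v_{2d}^\perp$, and identifies $v_{2d}$ with the rescaled limit of $\mathrm{Re}\,\Omega_t^\vee$ as $m_t\to\infty$. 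The orthogonalities to $e''$ and $f''$ from the previous step then translate into orthogonality to $p$, and orthogonality to $v_{2d}$ follows because the $(-2)$-root classes from Step 1 are supported on singular fibres of the elliptic fibration and hence orthogonal to the type III K\"ahler class. Therefore the image of $\Lambda_{\rm ABE}(\mathcal{X}^\vee)$ descends into $(p^\perp\cap v_{2d}^\perp)/p=\Lambda_{\rm per}(c(\mathcal{X},\mathcal{L}))$, and the descended map remains injective by the primitivity established in Step 1.

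The hard part will be the matching in the second step: pinning down $e'$ and $v_{2d}$ precisely in terms of type III data, compatibly with the rescaling $m_t\to\infty$. This requires running the Clemens--Schmid limit mixed Hodge structures on the type III and type II sides simultaneously, and uses essentially the minimality of $\mathcal{X}^\vee$ as a Kulikov model, so that no non-toric root-lattice contribution has been lost to flops, exactly as arranged just before the statement of the proposition.
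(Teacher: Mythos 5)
Your overall skeleton is the same as the paper's: realize $\Lambda_{\rm ABE}(\mathcal{X}^{\vee})$ inside $\Lambda_{\rm K3}$ through the Clemens contraction/retraction of the Kulikov model and a marking, and use that hyperk\"ahler rotation leaves the integral lattice $H^{2}(-,\Z)$ untouched. The genuine gap is in the step that carries all the content, namely the orthogonality to $v_{2d}$. You justify it by saying the root classes are ``supported on singular fibres of the elliptic fibration and hence orthogonal to the type III K\"ahler class.'' This conflates two different classes: $v_{2d}$ is \emph{not} the type III K\"ahler class. On the elliptic (type III) side the K\"ahler class is $[\omega^{\vee}_{t}]=m_{t}e''+f''$, whose normalized limit is the fibre class $e''$; the class $v_{2d}$ is the type II polarization, which under the rotation appears on the type III side as the limit of (a rescaling of) $\mathrm{Re}\,\Omega^{\vee}_{t}$, i.e.\ of the $(2,0)+(0,2)$-part of the period. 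Moreover the auxiliary claim is itself false as stated: fibre-supported classes are orthogonal to $e''$ but not to a K\"ahler class of the form $m_{t}e''+f''$ (they pair nontrivially with the section class $f''$ in general), and in any case the classes making up $\Lambda_{\rm ABE}$ live on the components $V_{i}$ of $\mathcal{X}^{\vee}_{0}$ as $(\sum_{j}\Z[D_{i,j}])^{\perp}\subset H^{2}(V_{i},\Z)$, not on singular fibres of a smooth elliptic K3.

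The paper's argument for precisely this point is short and Hodge-theoretic: the transported classes are algebraic, hence of type $(1,1)$ on the nearby fibres, while $v_{2d}$ arises as the limit of $\mathrm{Re}\,\Omega^{\vee}_{t}$, which lies in the $(2,0)+(0,2)$-part; so the pairing vanishes identically in $t$ and therefore in the limit. Replacing your ``singular fibre'' justification by this observation repairs your Step 2. Separately, your identification of $e'$ modulo $e''$ with ``the projection of $f''$ into $v_{2d}^{\perp}$'' is unsubstantiated and also unnecessary: that the root lattices sit inside $\Lambda_{\rm seg}=p^{\perp}/p$ (equivalently, orthogonality to the isotropic plane $p=\langle e'',e'\rangle$ and well-definedness modulo $p$) is part of the setup following \cite{ABE}, and the paper does not re-derive it; the Clemens--Schmid matching of $e'$ with type III data that you flag as ``the hard part'' is exactly where your version of the argument would otherwise stall.
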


\begin{proof}
Recall that the $\Lambda_{\rm ABE}(\mathcal{X}^{\vee})$ 
(\cite[\S 7G, \S7H]{ABE})
is the direct sum of the slightly generalized ADE lattices
$(\sum_{j}\Z [D_{i,j}])^{\perp}\subset H^{2}(V_{i},\Z)$. 
We use Clemens 
contraction map $\mathcal{X}^{\vee}_{1}\to \mathcal{X}^{\vee}_{0}$, 
and the marking of $\mathcal{X}^{\vee}_{1}$ so that 
we can regard $H^{2}(\mathcal{X}^{\vee}_{0},\Z)$ canonically 
\footnote{modulo the monodromy, 
but the classes in our actual concern are all monodromy invariant and 
further if one fixes a continuous 
path connecting $0$ and $1$ in $\Delta$, 
then it becomes canonical. }
as a sublattice of $\Lambda_{K3}$. 

Any $(\sum_{j}\Z [D_{i,j}])^{\perp}\subset H^{2}(V_{i},\Z)$ lies in 
$(1,1)$-part. On the other hand, 
from the construction of the 
 hyperK\"ahler rotation $\mathcal{X}^{\vee}$, one of its period (
 real part of the cohomology of the holomorphic volume form) 
 converges to $v_{2d}$ as $(2,0)$-part. Hence they are orthogonal. 
This completes the proof. 
\end{proof}

\begin{Ex}
If $2d=4$, i.e., degenerations of quartics, there are 
certainly examples where the above two lattices 
$\Lambda_{\rm ABE}(\mathcal{X}^{\vee})$ and 
$\Lambda_{\rm per}(\X,\mathcal{L})$ 
do not coincide. For instance, 
if $v_{2d}=2e''+f''$, then 
$$\Lambda_{\rm ABE}(\mathcal{X}^{\vee})\simeq E_{8}(-1)^{\oplus 2}$$
while
$$\Lambda_{\rm per}(\X,\mathcal{L})\simeq 
\langle -4 \rangle \oplus E_{8}(-1)^{\oplus 2}.$$
Also, there is another example with $2d=4$ 
such that $$\Lambda_{\rm ABE}(\mathcal{X}^{\vee})\simeq D_{8}(-1)^{\oplus 2}$$
while
$$\Lambda_{\rm per}(\X,\mathcal{L})\simeq 
\langle -4 \rangle \oplus D_{8}(-1)^{\oplus 2}.$$
\end{Ex}

\vspace{2mm}
\begin{Rem}
Similar even negative definite 
lattices appear also in a slightly different context of 
Dolgachev-Nikulin mirror symmetry for 
lattice polarized K3 surfaces \cite{Dolg}. 
Recall that the Dolgachev-Nikulin 
mirror (\cite[7.11]{Dolg}, \cite[4.1]{DHT}) 
of $\mathcal{F}_{2d}$ says, to each type II degeneration in 
$\mathcal{F}_{2d}$, there is 
an associated isotropic element $e(\X,\mathcal{L})$ 
in $\Lambda_{2d}$ modulo $\tilde{O}(\Lambda_{2d})$. 

From the arguments in \cite[4.14, 4.18, 6.10]{OO18}, 
in an open neighborhood of $0$-cusp, 
$e(\X,\mathcal{L})$ induces 
elliptic fibrations. Then, we expect that 
the direct sum of ADE lattices 
which represent the Kodaira type of reducible 
degenerations of fibers, 
coincides with $\Lambda_{ABE}(\X^{\vee})$. 
Indeed, in every $2d\le 4$ case, 
they coincide by the calculation of \cite[\S7]{Dolg}. 
\end{Rem}

We conclude the section by making an easy but important remark. 

\begin{Prop}[Denseness of algebraic limits]
Note that for each $d\ge 1$, we can consider 
$\overline{\mathcal{F}_{2d}}\to  
\overline{\mathcal{M}_{K3}}^{{\rm Sat},\tau}$ (see Lemma 
\ref{II.lem}, also \S \ref{Alg.limits.sec}). 
If we consider the union of such limits: 
$$\bigcup _{d\in \Z_{>0}}(\partial \overline{\mathcal{F}_{2d}}\cap 
\mathcal{M}_{K3}(d)^{\tau}),$$ 
then this countable set is dense inside the whole 
$17$-dimensional strata $\mathcal{M}_{\rm K3}(d)^{\tau}$. 
\end{Prop}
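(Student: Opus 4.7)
The plan is to combine the identification $\mathcal{M}_{\rm K3}(d)^\tau \simeq O(\Lambda_{\rm seg})\backslash C^+(\Lambda_{\rm seg})/\R_{>0}$ already used in the paper together with the surjectivity of the period map for polarized K3 surfaces, thereby reducing the density statement to the elementary density of rational rays in a real open positive cone.

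First I would observe that $\Lambda_{\rm seg}\otimes\Q \cap C^+(\Lambda_{\rm seg})$ is dense in $C^+(\Lambda_{\rm seg})$ (rational vectors are dense in any real vector space and $C^+$ is open), and that this density passes to the quotient $O(\Lambda_{\rm seg})\backslash C^+(\Lambda_{\rm seg})/\R_{>0}$ through the open continuous surjections by $\R_{>0}$-scaling and by the discrete arithmetic group $O(\Lambda_{\rm seg})$. Hence it is enough to realize every primitive integral $v \in \Lambda_{\rm seg}$ with $v^2 = 2d > 0$ as a point of $\partial \overline{\mathcal{F}_{2d}}\cap \mathcal{M}_{\rm K3}(d)^\tau$ for this particular $d$.

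Given such $v$, I would fix once and for all a primitive isotropic plane $p \subset \Lambda_{\rm K3}$, identifying $p^\perp/p \simeq \Lambda_{\rm seg}$, and choose any lift $\tilde{v}\in p^\perp$. Primitivity of $v$ together with saturation of $p^\perp$ in $\Lambda_{\rm K3}$ forces $\tilde{v}$ to be primitive in $\Lambda_{\rm K3}$ (a factorization $\tilde{v} = n\tilde{w}$ would project to $v/n \in \Lambda_{\rm seg}$, so $n=1$), and $\tilde{v}^2 = v^2 = 2d$ is independent of the lift because $p$ is isotropic and $\tilde{v}\perp p$. Since $p\subset \tilde{v}^\perp$, the $O(\tilde{v}^\perp)$-orbit of $p$ is a genuine Type II cusp of the Baily-Borel compactification $\overline{\mathcal{F}_{2d}}$. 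By global Torelli together with Kulikov semistable reduction, I would lift a holomorphic arc in the period domain $\{\Omega\in \tilde{v}^\perp\otimes\C : \Omega^2=0,\,\Omega\bar\Omega>0\}/\C^{\ast}$ hitting this cusp transversally (in a suitable toroidal compactification) to a one-parameter algebraic degeneration $\pi\colon (\X,\mathcal{L})\to \Delta$ with $(\X_t,\mathcal{L}_t)\in \mathcal{F}_{2d}$ for $t\neq 0$; the arc is algebraic because $\tilde{v}^2>0$ makes every marked $\tilde{v}$-polarized K3 projective. By Proposition~\ref{lim.tau} the limit $c(\X,\mathcal{L})\in \mathcal{M}_{\rm K3}(d)^\tau$ is then precisely the image of $\tilde{v}$ in $\Lambda_{\rm seg}=p^\perp/p$, namely $[v]$, finishing the argument.

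The only delicate point is the third paragraph: one must match the cusp actually approached by the family with the prescribed $p$, and verify genuine algebraicity (rather than mere holomorphicity) of the arc. Both are standard consequences of Baily-Borel and toroidal compactification theory of orthogonal modular varieties combined with the Kulikov-Persson-Pinkham description of Type II degenerations, so no essentially new work is needed beyond invoking these results.
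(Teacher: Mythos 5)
Your proof is correct and follows essentially the same route as the paper: the paper's own argument is just the observation that $\mathcal{M}_{\rm K3}(d)^{\tau}$ is the quotient of the positive cone $\{\lambda\in\Lambda_{\rm seg}\otimes\R\mid \lambda^{2}>0\}$ and that $\Lambda_{\rm seg}$ is an even \emph{integral} lattice, so integral (hence rational) rays are dense. The realization of each primitive $v$ with $v^{2}=2d>0$ as a point of $\partial\overline{\mathcal{F}_{2d}}\cap\mathcal{M}_{\rm K3}(d)^{\tau}$, which you spell out via lifting to $p^{\perp}$, Baily--Borel cusps, Torelli and Kulikov models, is left implicit in the paper (it is essentially the content of Proposition \ref{lim.tau} and the surrounding discussion in \S\ref{Alg.limits.sec}), so your write-up simply makes that step explicit.
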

\begin{proof}
This easily follows since 
$\mathcal{M}_{\rm K3}(d)^{\tau}$ is the quotient of 
$$\{\lambda\in \Lambda_{\rm seg}\otimes \R\mid \lambda^{2}>0\},$$
while $\Lambda_{\rm seg}$ is an even {\it integral} lattice. 
\end{proof}

This implies the following straightforwardly. 
\begin{Cor}[Possible PL invariants for type II degenerations]
Possible PL invariants for type II degenerations 
of polarized K3 surfaces run over a dense subset of 
which appears in \cite[\S 7A]{ABE} and \cite{Osh}. 
\end{Cor}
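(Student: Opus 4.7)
The plan is to combine the preceding Proposition on denseness of algebraic limits with the continuity of the geometric realization map established earlier in the paper. More precisely, by Proposition~\ref{lim.tau} the PL invariant attached to any type II degeneration $\pi \colon (\mathcal{X},\mathcal{L}) \to \Delta$ is
\[
V_{\pi} \;=\; \tilde{\Phi}\bigl(c(\mathcal{X},\mathcal{L})\bigr),
\]
so the set of PL invariants that actually arise from algebraic families is exactly the image under $\tilde{\Phi}$ of the countable union
\[
S \;:=\; \bigcup_{d \in \Z_{>0}} \bigl(\partial\overline{\mathcal{F}_{2d}} \cap \mathcal{M}_{\rm K3}(d)^{\tau}\bigr) \;\subset\; \mathcal{M}_{\rm K3}(d)^{\tau}.
\]
The immediately preceding Proposition (Denseness of algebraic limits) shows that $S$ is dense in the $17$-dimensional strata $\mathcal{M}_{\rm K3}(d)^{\tau}$.

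First I would recall that the entire space of PL density functions appearing in \cite[\S7A]{ABE} and \cite{Osh} is exactly $\tilde{\Phi}(\mathcal{M}_{\rm K3}(d)^{\tau})$, by construction of $\tilde{\Phi}$ in Definition~\ref{V.def}\eqref{open.part.realize}. Then I would invoke Theorem~\ref{continuity2}, which gives continuity of $\tilde{\Phi}$ with respect to the measured Gromov--Hausdorff topology on the target; since continuous maps send dense subsets to dense subsets (in the image topology), denseness of $S$ in $\mathcal{M}_{\rm K3}(d)^{\tau}$ implies denseness of $\tilde{\Phi}(S)$ in $\tilde{\Phi}(\mathcal{M}_{\rm K3}(d)^{\tau})$. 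This is precisely the claim.

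The only genuine subtlety — and thus the main point to be careful about — is matching the topologies: the natural topology on $\mathcal{M}_{\rm K3}(d)^{\tau}$ is inherited from the Satake compactification $\overline{\mathcal{M}_{\rm K3}}^{\rm Sat,\tau}$, while denseness of the image is a statement in the measured Gromov--Hausdorff topology on PL functions. Theorem~\ref{continuity2} is exactly the bridge, so I would make this invocation explicit and note that no further input is required. A one-line remark that $\tilde{\Phi}$ is defined on all of $\mathcal{M}_{\rm K3}(d)^{\tau}$ (i.e.\ we are not throwing away the two special points $p_{\rm seg}$, $p_{\rm nn}$, since they form a set of codimension $17$ and do not affect denseness) closes the argument. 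No truly hard step arises here; the corollary is a formal consequence of the Proposition and Theorem~\ref{continuity2} once the definitions are unwound.
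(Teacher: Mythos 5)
Your argument is essentially the paper's: the corollary is deduced "straightforwardly" from the denseness proposition via the identification $V_{\pi}=\tilde{\Phi}(c(\mathcal{X},\mathcal{L}))$ of Proposition~\ref{lim.tau} and the parametrization of the \cite[\S 7A]{ABE}/\cite{Osh} functions by $\mathcal{M}_{\rm K3}(d)^{\tau}$ in Definition~\ref{V.def}, with Theorem~\ref{continuity2} available if one insists on denseness in the function (measured Gromov--Hausdorff) topology rather than in the parameter space. Only a cosmetic slip: $p_{\rm seg}$ and $p_{\rm nn}$ are separate strata of \eqref{domain.strata}, not points of $\mathcal{M}_{\rm K3}(d)^{\tau}$, so no remark about excluding them is needed.
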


This result in particular gives {\it negative} answer to 
the first question of \cite[\S 2.6]{HSZ} on the 
behaviour of $V$. 


\section{\cite{HSVZ} glued metric and 
Type II limits of algebraic K3 surfaces}\label{HSVZ.sec}

The recent work of Hein-Sun-Viaclovsky-Zhang \cite{HSVZ} gives 
construction of compact K3 surfaces at the level of hyperK\"ahler structures, 
by glueings of Tian-Yau metrics and Taub-NUT type metrics, 
which maps and collapses to an interval. 

In this section, we reveal how \cite{HSVZ} fits into our picture, 
therefore giving more structures. 
As a result, {\it loc.cit} 
roughly corresponds to two following aspects simultaneously: 
\begin{Aspect}\label{Aspect.EAE}
 the special stable type 
$\E\A\E$ in \cite{ABE} (cf., also our \S \ref{ABE.sec},  \S \ref{measure.decide}), 
\end{Aspect}
\begin{Aspect}\label{Aspect.two.Lag}
also the pushforward of two Lagrangian fibrations on the 
limitting K3 surfaces. 
\end{Aspect} 

\subsection{Review of \cite{HSVZ} construction}

First, we recall their construction here (while we leave full details 
to {\it loc.cit}). 
They construct compact hyperK\"ahler manifolds (hence homeomorphic to  
the K3 surfaces) by glueing, which maps to an interval, 
from the following set of data: 

\begin{itemize}
\item two arbitrary DelPezzo surfaces $X_{1}$ with 
the degrees 
$d_{1}:=(-K_{X_{1}})^{2}$ and $d_{2}:=(-K_{X_{2}})^{2}$, 
\item choice of their (isomorphic) smooth anticanonical divisors $D_{i}\subset X_{i} (i=1,2)$ 
with an isomorphism $D_{1}\simeq D_{2}$, 
\item Tian-Yau metrics (\cite{TY90}) on $X_{i}\setminus D_{i}$ 
(note $\chi(X_{i}\setminus D_{i})=12-d_{i}$) 
which is cohomologically zero in $H^{2}(X_{i}\setminus D_{i},\R)$, 
\item a transition region $\mathcal{N}$ whose general fibers 
over the interval are 
$(T^{2}\times \mathbb{R})$  away from 
$(d_{1}+d_{2})\text{-points}$ in the base, 
\item a hyperK\"ahler metric on $\mathcal{N}$ constructed by 
the Gibbons-Hawking ansatz, 
\item (parameter specifying the attaching parameter 
for the $S^{1}$-rotation), 
\item the ``collapsing parameter'' $\beta\in (0,1]$. 
\end{itemize}

As for the Tian-Yau metric of above situation, 
they analyzed its asymptotic at the boundary $D_{i}$ to 
identify with  
ALH (or ALG$^{*}$ suggested by \cite{CC}) 
with exactly quadratic curvature decay and 
the non-integer 
volume growth $\sim r^{\frac{4}{3}}$ (cf., also 
\cite[Theorem 1.5(iii), $I_{b}$-case]{Hein}), 
where $r$ denotes the distance from some arbitrary base point. 

From the above data, {\it loc.cit} glues the Tian-Yau hyperK\"ahler 
metrics on $(X_{i}\setminus D_{i})$ 
and 
some Gibbons-Hawking metrics with several 
(multi-)Taub-NUT asymptotics 
on $\mathcal{N}$, 
which collapses to the interval $[0,1]$ when $\beta\to 0$ 
(also see earlier expectation by R.Kobayashi \cite[p223]{Kobb90}), 
which we here write $S_{\beta}$ with its 
hyperK\"ahler metric $g_{\beta}$. 
Furthermore, they provide a continuous map 
$F_{\beta}\colon S_{\beta}\to [0,1]$ which satisfies: 
\begin{enumerate}
\item the fibers over ends $F_{\beta}^{-1}(0)$ and $F_{\beta}^{-1}(1)$ 
are closure of open locus in the Tian-Yau spaces $X_{i}\setminus D_{i}$, 
\item \label{aff.str}
for $\beta\to 0$, 
$(S_{\beta},g_{\beta})$ converges in the Gromov-Hausdorff sense 
to the unit interval 
with natural affine structure (induced from the behaviour of 
harmonic functions on $S_{\beta}$), 
\footnote{We observe in general this affine structure is not 
same as the one 
induced from non-archimedean structure as used in \cite{BJ}.} 
\item \label{V.HSVZ}
the limit measure on the interval is written as 
${\sqrt{V(x)}}dx $ with a convex PL function on $[0,1]$ 
with $V(0)=V(1)=0$, 
where $dx$ stands for the affine structure above. 
\end{enumerate}

\begin{Rem}
With respect to this affine structure $dx$, assuming the Gromov-Hausdorff limit of rescaled metrics with fixed diameters is identified with 
the dual graph, the natural affine structure with respect to the latter perspective is $V(x)dx$ (see \cite{BJ}). 
\end{Rem}

Recall that \cite{TY90} first constructed 
the hermitian metric $h$ on the normal bundle for 
$D_{i}\subset X_{i}$ whose curvature form is Ricci-flat, 
then solved the complex Monge-Amper\'e equation with 
the reference metric of Calabi-ansatz type via $h$. 

As \cite{FukI,FukII,CFG} show, the fibers are infranilmanifolds, 
indeed simply Heisenberg nilmanifolds (cf., also \cite[\S 2.2]{HSZ}). 
In particular, they also 
confirmed their hyperK\"ahler manifolds are parametrized by 
$57$-dimensional data (plus rescaling data), i.e., at least containing 
some open subset of $\mathcal{M}_{\rm K3}$. 
Note that $V(0)=V(1)=0$ condition of the above \eqref{V.HSVZ} 
infers, as \cite{Osh} shows logically, 
it should only gives a neighborhood of $E(A)E$ type subcone of 
the fundamental polygon $P (\simeq {\mathcal{M}}_{\rm K3}(d)^{\tau})$ 
in the whole ${\mathcal{M}}_{\rm K3}^{\rm Sat,\tau}$, 
hence the direction which involves D type is missing. 

\vspace{4mm}

Then after \cite{HSVZ}, 
more recent work of Honda-Sun-Zhang \cite{HSZ} proved 
similar PL structure for all possible {\it limit measure} on the 
Gromov-Hausdorff limit when it is $1$-dimensional (interval). 
In \S 2.6 of {\it loc.cit}, they raise some questions regarding the 
function $V$ to which we answer: 

\begin{itemize}

\item First question in {\it loc.cit} asks 
if $V(p)=0$ at the boundary point $p$ in the case when 
$V$ is not constant. This is far from true, from the presence of 
D type region combined with Theorem \ref{continuity2}. 

\item The second question in {\it loc.cit}, in the situation of 
\cite{HSVZ}, asks 
if $V$ is singular at the $d_{1}+d_{2}$ points in the interval. 
The answer is yes from our conclusion. 

\item Their third question is about the ratio of slopes. 
As our analysis so far, the slopes can be normalized to 
$0, \pm 1, \cdots, \pm 9$ and the ratios are rational as expected. 

\end{itemize}

\subsection{Our interpretation of \cite{HSVZ}}

Now, we discuss the aspects \ref{Aspect.EAE}, \ref{Aspect.two.Lag} 
of the beginning of this \S \ref{HSVZ.sec}. 

\subsubsection*{For Aspect\ref{Aspect.EAE} - Landau-Ginzburg model}

Recall that \cite[Theorem 6.4] 
{CJL} relates the above 
Tian-Yau metrics and those of $\frac{4}{3}$-order 
volume growth gravitational instanton on rational 
elliptic surfaces (\cite{Hein}) 
by hyperK\"ahler rotations 
(cf., \cite[6.9]{CJL}, \cite[2.5]{HSVZ}). 
We expect our viewpoint may help to clarify 
relation with the Landau-Ginzburg models \cite{EHX}, 
as we partially give observation here. 

As first instance, we observe that for type II degeneration with 
one component isomorphic to $\mathbb{P}^{2}$, 
the its underlying $\mathbb{R}^{2}$ below our 
degenerate elliptic K3 surface of $X_{3}/\E_{0}$-type (\cite[7.4]{ABE}, \S\ref{ABE.sec},\S\ref{ABE.pre.sec}) 
is 
the limit of the affine structures of Gross-Siebert program 
type at \cite[Example 2.4]{CPS} (see also \cite[\S 3.1]{LLL}), 
which has $3$ $I_{1}$-type singularities of affine structure. 
Indeed, if three of them 
collide via moving worms \cite{KS}, it becomes the abovementioned 
$X_{3}/\E_{0}$-type singularity of affine structures. 
\cite{LLL} also identified it with the affine structure 
coming from special Lagrangian fibration of a complement 
of cubic curve in $\PP^{2}$ constructed in \cite{CJL}. 
See the details at \cite{CPS, CJL, LLL}. 

Also, \cite{ABE} with the arguments in this paper provide further evidence to a variant of 
Doran-Harder-Thompson expectation \cite{Dolg, DHT} for K3 surfaces, 
where ``mirror" is replaced or specialized to be 
hyperK\"ahler rotation, with slight refinement by 
putting $\A$-type surfaces between. 
In particular, this picture applies for general type II degenerations, 
with possibly many irreducible components, hence 
not necessarily Tjurin degeneration in the sense of \cite{DHT}. 

Indeed, recall that in \cite[\S7]{ABE} moduli compactification and 
our reconstruction 
in \eqref{ABE.moduli}, the main role was played by the 
singular fibers behaviour. Such fact 
together with our interpretation of $M_{W}$ as 
limits of hyperK\"ahler rotated K3 surfaces may naturally 
invoke the homological mirror symmetry type phenomenon after  \cite{Sei}, 
that the Lefschetz vanishing cycles around the degenerations 
of the elliptic curves reflect the B-model pictures of the 
degeneration of K3 surfaces. We hope to have 
further understanding of it in our context in 
more systematic way in future.

\subsubsection*{For Aspect \ref{Aspect.two.Lag} - relation with 
two Lagrangian fibrations}

We take a sequence of 
$(g_{8}, g_{12})\in H^{0}(\PP_{s}^{1},\OO(8))\times 
H^{0}(\PP_{s}^{1},\OO(12))$ 
converging to $(3s^{4},s^{6})$ 
and the associated 
Weierstrass K3 surface 
\begin{align*}
\pi''\colon X&:=[y^2 z=4x^3-g_8(s)xz^2+g_{12}(s)z^3]\\
& \subset
\mathbb{P}_{\mathbb{P}_{s}^{1}}(\mathcal{O}_{\mathbb{P}^{1}}(4)\oplus \mathcal{O}_{\mathbb{P}^{1}}(6)\oplus \mathcal{O}_{\mathbb{P}^{1}}) \\ 
&\to B\simeq \PP_{s}^{1}
\end{align*}
converging to $\lambda\in\mathcal{M}_{\rm K3}(d)^{\tau}$ 
in the Satake compactification 
$\overline{\mathcal{M}_{\rm K3}}^{Sat, \tau}$. 
For $i\gg 0$, we have two Lagrangian fibrations: 

\begin{enumerate}
\item \label{e'.ii} 
As we showed in \cite[\S4]{OO18}, for fixed $m\gg 0$, 
we obtain a hyperK\"ahler rotation $X^{\vee}_{m}$ of $X$ 
which is canonically diffeomorphic to $X$ (so that we can 
keep the corresponding marking to original $\varphi$ for $X$) 
whose holomorphic form $\Omega^{\vee}_{m}$ has cohomology class 
as 
\begin{align}
[\Omega^{\vee}_{m}]=|\log \epsilon|^{-1}{\rm Re} \Omega +\sqrt{\frac{-1}{2m}} c(f''+me'').
\end{align} 
Here, $\epsilon$ is as \eqref{e.e'.seq} and 
$c_{i}$ is uniquely determined positive constant 
which automatically converges to $1$ for $i\to \infty$. 
By the same argument as \cite[\S4]{OO18}, we obtain a fibration 
structure $\pi'\colon X^{\vee}_{m}\to \PP_{s}^{1}=B_{m}^{\vee}
$ defined by the pencil $|e'|$ with the fiber class $e'$. 
Note that this is a special Lagrangian fibration with respect to the 
original complex structure, as in \cite[\S 4]{OO18}. 

\item Original $\pi''\colon X\to B\simeq \PP_{s}^{1}$, 
\footnote{Recall that in our first sections, 
the symbol $\pi$ was used as a 
one parameter degeneration of K3 surfaces.}
the Weierstrass elliptic fibration structure. The 
fiber class is $e''$ and is determined as $|e''|$. 
This is Lagrangian fibration with respect to the holomorphic 
volume form $\Omega$. 
\end{enumerate}

As \cite{HSVZ} confirms, its glued K3 surfaces 
form a subset of $\mathcal{M}_{K3}$ which includes 
an open subset $U_{HSVZ}$ whose closure is in 
the EAE region of $\mathcal{M}_{K3}(d)^{Sat, \tau}$. 
We can and do assume that $U_{HSVZ}$ is close to 
the boundary enough so that its any point 
has the special Lagrangian fibration $\pi'$ in \eqref{e'.ii}. 

\begin{Conj}
For any glued fibration of K3 surface to the segment as in \cite{HSVZ} 
so that 
$p=(F_{\beta}\colon X\to [0,1])\in U_{HSVZ}$, 
$F_{\beta}$ factors through both $\pi'$ and $\pi''$. 
There is a $1$-homology class, which we denote $e'\cap e''$, 
such that 
\begin{itemize}
\item 
$e'\cap e''$ is primitive in both $H_{1}(e',\Z)$ and $H_{1}(e'',\Z)$. 
\item 
$e'\cap e''$ is monodromy invariant with respect to both 
$\pi'$ and $\pi''$. 
\end{itemize}
\end{Conj}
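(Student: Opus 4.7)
The strategy is to work in the neck region $\mathcal{N}$ of the HSVZ construction, where both fibrations admit explicit simultaneous descriptions via the Gibbons-Hawking ansatz. Over points in $U_{\rm HSVZ}$, the generic fiber of $F_{\beta}$ above an interior point of $[0,1]$ is diffeomorphic to $T^3$, with $d_1+d_2$ special points where an $S^1$-orbit collapses. This $T^3$ carries a distinguished circle, namely the Hopf circle of the Gibbons-Hawking $S^1$-bundle, which I expect to be the desired class $e' \cap e''$.

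First I would establish the factorization. In the Gibbons-Hawking chart of $\mathcal{N}$, the base coordinate of $F_{\beta}$ is, up to a small harmonic perturbation, the linear coordinate along $\R$ in the $T^2$-invariant decomposition $T^2 \times \R$ of the Gibbons-Hawking base, arising from the $T^2$-symmetry of the harmonic potential $V$. The Weierstrass elliptic fibration $\pi''$ restricts, on the neck, to the projection of the $T^3$-fiber onto one $T^2$-subtorus (the one transverse to the ``modular'' $S^1$), while the special Lagrangian fibration $\pi'$ from \cite[\S 4]{OO18} corresponds to projection onto the other $T^2$-subtorus, via the hyperK\"ahler rotation formula for $[\Omega^{\vee}_{m}]$ recalled earlier in item (i). In both cases $F_{\beta}$ is constant along these $T^2$-fibers, so it factors through both $\pi'$ and $\pi''$. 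On the Tian-Yau caps, factorization is immediate from the fact that $F_{\beta}$ coincides there with the Calabi coordinate, which is pulled back from the $\E_9$-fibration on each rational elliptic surface.

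Next, I define the $1$-homology class $e' \cap e''$ as the intersection of a generic $\pi'$-fiber with a generic $\pi''$-fiber inside a $T^3$-fiber of $F_{\beta}$. Since the two $2$-torus fibers correspond to distinct $T^2$-subtori of $T^3$ that both contain the Hopf $S^1$, their intersection is exactly the Hopf $S^1$, which is automatically primitive in each. Monodromy invariance follows because the Hopf circle is globally defined on the whole Gibbons-Hawking region and degenerates only at the $d_1+d_2$ special points; the monodromy of $\pi''$ around an $I_1$-degeneration fiber precisely fixes this vanishing cycle, and the corresponding statement for $\pi'$ follows by hyperK\"ahler rotation, whose singular fibers arise from the Taub-NUT-type points in the interior and the $\tilde{\E}_8$-type degenerations near the two ends.

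The main obstacle will be the precise identification of the hyperK\"ahler-rotated fibration $\pi'$ inside the glued HSVZ manifold. The rotation is defined at the level of periods and cohomology classes, but the statement requires an explicit smooth $2$-cycle representative of a $\pi'$-fiber realized inside the Gibbons-Hawking chart of $\mathcal{N}$ and compatible with the $T^3$-slicing above. One natural approach is to approximate $\pi'$ by the tropical McLean-type special Lagrangian fibration from \cite[\S 4]{OO18} and to apply the stability of special Lagrangian tori under small perturbations of the hyperK\"ahler triple, then match the output with the explicit $T^2$-subtorus in the neck region; a second, more algebraic route would be to compare the monodromy representations of $\pi'$ and $\pi''$ via the lattice $\Lambda_{\rm per}(c(\X,\mathcal{L}))$ of Proposition \ref{2lattices} and identify $e'\cap e''$ as the common primitive isotropic class in the Clemens-Schmid picture.
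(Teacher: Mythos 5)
The statement you are addressing is labelled a \emph{Conjecture} in the paper, and the paper offers no proof of it at all (it only remarks that the conjecture would explain the Heisenberg nilmanifold fibers of \cite{HSVZ} as $S^1$-bundles over elliptic curves). So there is no argument of the author to compare yours against; what you have written must stand on its own, and as it stands it is a heuristic outline rather than a proof, with several genuine gaps.

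First, your topological premise is off: the generic fiber of $F_{\beta}$ over an interior point is not $T^{3}$ but a Heisenberg nilmanifold (a nontrivial $S^1$-bundle over $T^{2}$), as the paper recalls from \cite{FukI,FukII,CFG,HSZ}; the ``two $T^{2}$-subtori of $T^{3}$ meeting in the Hopf circle'' picture must be reformulated as preimages of circles in the base $T^{2}$ of that bundle, which is repairable but is not what you wrote. Second, and more seriously, the exact factorization $F_{\beta}=h\circ\pi'$ and $F_{\beta}=h''\circ\pi''$ cannot follow from the Gibbons--Hawking description alone: the HSVZ metric and the map $F_{\beta}$ are produced by a gluing construction with error terms, so the level sets of $F_{\beta}$ are only \emph{approximately} unions of fibers of the holomorphic or special Lagrangian fibrations, and you give no mechanism (implicit function theorem, perturbation of $F_{\beta}$, or otherwise) to upgrade approximate constancy to exact constancy along fibers. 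Third, even the existence of $\pi'$ and $\pi''$ on the glued manifold presupposes an identification of the HSVZ hyperK\"ahler manifold with a member of the degenerating Weierstrass family and its hyperK\"ahler rotation; this is a Torelli-type matching of complex structures, not just of metrics, and your two proposed remedies (persistence of special Lagrangian tori under perturbation of the hyperK\"ahler triple, or a lattice-theoretic comparison via Proposition \ref{2lattices}) are themselves unproven and delicate near the singular fibers, which is precisely why the statement is left as a conjecture. Your identification of the candidate class $e'\cap e''$ with the Gibbons--Hawking circle, and the monodromy-invariance heuristic via vanishing cycles at the $d_{1}+d_{2}$ monopole points, do match the intended picture and would be the right skeleton for an eventual proof, but the cap regions also need more care: on the Tian--Yau ends the elliptic fibration lives on the hyperK\"ahler-rotated (rational elliptic surface) side, so saying that $F_{\beta}$ ``coincides with the Calabi coordinate pulled back from the $\E_{9}$-fibration'' again conflates the two sides of the rotation and only holds up to the same unquantified gluing errors.
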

The above conjeture would clarify an interpretation of the 
nilmanifold (Heisenberg manifold) fiber of \cite{HSVZ} as 
$S^{1}$-bundle over an elliptic curve. 
\begin{Rem}
It would be interesting to see if the conjectural map 
$B_{m}^{\vee}$ coincides with a moment map for a $\C^{*}$-action 
on it with the McLean metric and the limit measure is comparable to 
its Duistermaat-Heckman measure. 
\end{Rem}

\begin{Rem}
The domain wall crossing \cite[Theorem 1.5]{HSVZ} 
(also treated in Type II superstring theory before 
according to \cite[Remark1.6]{HSVZ}) is now reflected as 
the formation of the singularity of affine structure of 
$I_{w}$ type. 
\end{Rem}


\section{Root lattice type and Type II degenerations} 

Suppose we have a type II polarized degeneration of K3 surfaces $\pi\colon (\X,\mathcal{L})\to \Delta$. 
As an example case, suppose the end component of $\X_0$ is $\F_1$. 
Consider the ample cone of the $\F_{1}$, 
which gives the simplest classical instance of $2$-ray game 
(cf., \cite{Takeuchi} for higher dimensional work) 
of Fano variety: 
Denote the natural projection  $\varphi\colon \F_{1}\to \PP^{2}$, 
$\psi\colon 
\F_{1}\to \PP^{1}$, 
and $H$ the hyperplane in $\PP^{2}$ passing through the 
center of $\varphi$ $p$, 
$E$ the exceptional curve, and set the strict transform of $H$ 
as $H'$ so that $\varphi^{*}H=H'+E$, 
as local notation. 
Then as is well-known and easy, the ample cone is 
$${\rm Amp}(\F_{1})=\R_{\ge 0}[\varphi^{*}H]+\R_{\ge 0}[\pi^{*}
\mathcal{O}_{\PP^{1}}(1)]$$
so that each extremal ray corresponds to $\varphi$ and $\pi$. 

Our point here is that if we consider MMP of $\F_{1}$ 
with scaling in $|L|$, then depending on the terminal objects 
(either $\PP^{1}$ or $\PP^{2}$), we have a subdivision of the cone: 

\begin{align*}
{\rm Amp}(\F_{1})
&=(\R_{\ge 0}[\varphi^{*}H]+\R_{\ge 0}[-K_{\F_{1}}])\\ 
&+(\R_{\ge 0}[-K_{\F_{1}}]+\R_{\ge 0}[\psi^{*}\mathcal{O}_{\PP^{1}}(1)]).
\end{align*}
We denote the first cone as $\mathcal{C}_{1}$ and the second as 
$\mathcal{C}_{2}$. 
Then we observe the following: 
if type II Kulikov degeneration with nef (but generically ample \cite{She}) polarization $\mathcal{L}$ has 
end component $V\simeq \F_{1}$, then 
\begin{itemize}
\item 
$[\mathcal{L}|_{V}]\in \mathcal{C}_{1}$ if and only if it becomes $\E$ type singularity and 
\item 
$[\mathcal{L}|_{V}]\in \mathcal{C}_{2}$ if and only if 
it becomes $\D$ type singularity. 
\end{itemize}
Now we conjecture the following. 
\begin{Conj}[$\D$ vs $\E$ conjecture]
We consider type II polarized degeneration of K3 surfaces $(\mathcal{X},\mathcal{L})\to \Delta$ in $\mathcal{F}_{2d}$. 
Take a simultaneous resolution after base change to make it Kulikov model $\tilde{\mathcal{X}}$. We denote the pull back of $\mathcal{L}$ to $\tilde{\mathcal{X}}$ 
as $\tilde{\mathcal{L}}$ and $\mathcal{X}_0=V_0\cup V_1$, the stable type II degeneration (\cite{Fri84,Kondo}). 

Suppose that if we run the MMP with scaling 
\footnote{As in Example \ref{Ex.Fri}, 
$\tilde{\mathcal{L}}|_{V_{i}}$ can be only semiample and big, 
as the pullback of ample line bundle on some crepant contraction. 
Nevertheless, the MMP with scaling still makes sense. If not 
preferred, one can pass to the crepant contraction induced by 
$\tilde{\mathcal{L}}|_{V_{i}}$ and discuss on it. }
in 
$\tilde{\mathcal{L}}|_{V_i}$ to $V_i$, it ends with ruled surface structure (resp., birational contraction). 
Then our hyperK\"ahler rotation of $(\mathcal{X}_t, \mathcal{L}_t)$ limits to $\D$ type end of interval (resp., $\E$ type end of interval). 
\end{Conj}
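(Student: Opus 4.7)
The plan is to reduce the conjecture, via the hyperK\"ahler rotation framework developed earlier in the paper, to the explicit Weierstrass-form classification of end components of type III degenerations of elliptic K3 surfaces from Section~\ref{ABE.sec}, where the $\D$ versus $\E$ dichotomy is built into the normal form itself. Concretely, starting from the Kulikov model $(\tilde{\X}, \tilde{\mathcal{L}}) \to \Delta$ with central fiber $V_0 \cup V_1$ and double curve the smooth elliptic curve $D = V_0 \cap V_1$, I would perform a hyperK\"ahler rotation along the direction of the vanishing cycle class associated to $D$, as in \cite[\S 4]{OO18} and Aspect~\ref{Aspect.two.Lag} of \S\ref{HSVZ.sec}. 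The rotated family $\mathcal{X}^{\vee} \to \Delta^{\vee}$ is then a type III Kulikov degeneration of elliptically fibered K3 surfaces, whose two end surfaces $W_0, W_1$ fall into one of the $\D$-type or $\E$-type classes from \S\S\ref{D.surf}--\ref{E.surf}. The end behavior of $V_{\pi}$ is recorded by this labeling via our reproof of Theorem~\ref{ABE.stable.reduction} and via \S\ref{measure.decide}.

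The second step is to identify, at the level of birational geometry, the end component $V_i$ of the original type II degeneration with the end surface $W_i$ of the rotated type III family. These two surfaces are not isomorphic in general, but they share a common minimal structure dictated by the boundary elliptic curve and the lattice of vanishing cycles (cf.\ Proposition~\ref{2lattices}); under this correspondence, the ample cone of $V_i$ maps onto the cone of admissible polarizations for the Weierstrass model of $W_i$. The two extremal rays of this cone correspond to the two possible shapes of the Weierstrass data on $W_i$: either the degenerate $(3P^{2}, P^{3})$ shape that forces a double-fiber structure whose normalization is a Hirzebruch surface (\S\ref{D.surf}, $\D$-type), or the generic rational elliptic surface shape (\S\ref{E.surf}, $\E$-type). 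The $\F_1$ example in the motivating paragraph is the universal local model: $\mathcal{C}_2$ (ruled output) matches the $\D$-type locus, while $\mathcal{C}_1$ (contraction to $\PP^2$) matches the $\E$-type locus. A general $V_i$ reduces to this local model by a sequence of MMP steps that do not change the final $\D$ versus $\E$ outcome.

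The hardest step will be making the identification of the second paragraph work in families, because the hyperK\"ahler rotation is transcendental and does not obviously commute with algebraic birational modifications. The approach I would pursue is to argue via monodromy invariants and the Satake compactification $\overline{\M_{\rm K3}}^{\rm Sat,\tau}$ of \S\ref{measure.decide}: both the MMP output on $V_i$ and the $\D$ versus $\E$ type of $W_i$ are determined by the image of $0 \in \Delta$ in this compactification, and the key intermediate lemma is that these two determinations agree, namely that they are cut out by the same face of the fundamental polygon $P$. This reduces the conjecture to a finite combinatorial check, verifiable using the explicit stable reduction algorithm of Theorem~\ref{ABE.stable.reduction} together with the PL description of $V$ in \S\ref{measure.decide}. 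A secondary technical point is that $\tilde{\mathcal{L}}|_{V_i}$ is generally only semiample and big; as noted in the footnote of the conjecture, MMP with scaling still makes sense here, but one should verify that passing to the crepant contraction induced by $\tilde{\mathcal{L}}|_{V_i}$ does not obscure the $\D$/$\E$ distinction, which follows from the fact that the distinction is recorded by the lattice $\Lambda_{\rm ABE}(\mathcal{X}^{\vee})$ of Proposition~\ref{2lattices} and is therefore invariant under crepant birational transforms.
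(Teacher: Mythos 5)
The statement you are trying to prove is stated in the paper as a \emph{conjecture} (the ``$\D$ vs $\E$ conjecture''): the paper supplies no proof, only the motivating subdivision ${\rm Amp}(\F_{1})=\mathcal{C}_{1}\cup\mathcal{C}_{2}$ of the ample cone of $\F_{1}$ into the $\E$-side and $\D$-side chambers, plus the consistency check in Example~\ref{Ex.Fri} against Friedman's list of degree-$2$ type II degenerations. So your write-up should be judged as an attempted proof of an open statement, and as such it contains a genuine gap rather than a complete argument.

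The gap is that your ``key intermediate lemma'' --- that the MMP-with-scaling outcome on $V_{i}$ and the $\D$/$\E$ label of the corresponding end surface of the hyperK\"ahler-rotated type III family are cut out by the same face of the fundamental polygon $P$ --- is essentially a restatement of the conjecture itself, and you never prove it nor carry out the ``finite combinatorial check'' you defer it to. Concretely, nothing in your second paragraph establishes the claimed identification of $V_{i}$ with the rotated end surface $W_{i}$, nor the asserted map from ${\rm Amp}(V_{i})$ onto a cone of Weierstrass data for $W_{i}$: the rotation is transcendental, the end behaviour of $V_{\pi}$ (i.e.\ $V(0)=0$ versus $V(0)\neq 0$, equivalently the sign of $(l,\beta_{L})$ in the explicit description of \S\ref{measure.decide}) is read off from the limit point $c(\X,\mathcal{L})\in\mathcal{M}_{\rm K3}(d)^{\tau}$, and you give no argument linking that limit point to the extremal contraction singled out by $\tilde{\mathcal{L}}|_{V_{i}}$ on the type II side. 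Invoking Proposition~\ref{2lattices} cannot close this: it only provides the inclusion $\Lambda_{\rm ABE}(\mathcal{X}^{\vee})\subset\Lambda_{\rm per}(c(\X,\mathcal{L}))$, and the example immediately following it in the paper shows the two lattices genuinely differ, so the inclusion does not determine the $\D$ versus $\E$ type of the ends, let alone its invariance under your reduction to the $\F_{1}$ local model (which is itself only asserted, not proved, for general $V_{i}$ with semiample big $\tilde{\mathcal{L}}|_{V_{i}}$). Your overall strategy --- rotate, use the stable reduction of Theorem~\ref{ABE.stable.reduction} and the Satake/monodromy picture of \S\ref{measure.decide} to control the ends --- is a reasonable route toward the conjecture, but as written it assumes exactly the comparison the conjecture asks for.
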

\begin{Ex}\label{Ex.Fri}
Indeed, it at least matches to the $4$ cases of degree $2$ examples: see 
\cite[5.2]{Fri84} (cf., also \cite{Shah, AET}). 
\end{Ex}

\begin{Rem}[Strong open K-polystable degenerations on $M_{W}^{\rm nn}$]
For $X:=\mathbb{F}_{2}$, $D$ an elliptic bi-section 
for the ruling, then $X^{o}:=X\setminus D$, 
for certain range of ample $L$, 
$(X^{o},L^{o}:=L|_{X^{o}})$ is strongly 
open K-polystable \cite{ops}, as in the arguments of 
{\it loc.cit}. Indeed, \cite{AP} applied to the 
crepant contraction to the quadric cone 
$X\to \mathbb{P}(1,1,2)$ implies that. 
This appears as $M_{W}^{\rm nn}$ in \cite[\S 7]{OO18}. 
We expect that these $\D$ type degenerating family bubble off 
different ALH gravitational instantons along 
minimal non-collapsing rescaling in the sense of \cite[\S 6]{ops}. 
\end{Rem}



\bigskip

\footnotesize 
\noindent
Email address: \footnotesize 
{\tt yodaka@math.kyoto-u.ac.jp} \\
Affiliation: Department of Mathematics, Kyoto university, Japan  \\


\begin{thebibliography}{FGA}
\tiny{

\bibitem[AV02]{AV}
D.Abramovich, A.Vistoli, 
Compactifying the space of stable maps, 
J.\ Amer.\ Math.\ Soc.\ 15 (1): 27-75. 2002. 

\bibitem[AN99]{AN}
V.~Alexeev, I.~Nakamura,
On Mumford's construction of degenerating abelian varieties, 
Tohoku Math.\ J.\ vol.\ 51, pp.399--420 (1999). 

\bibitem[ABE20]{ABE} 
V.~Alexeev, A.~Brunyate, P.~Engel, 
Compactifications of moduli of elliptic K3 surfaces: stable pairs and 
toroidal, arXiv:2002.07127v3. 



\bibitem[AET19]{AET}
V.~Alexeev, P.~Engel, A.~Thompson, 
Stable pair compactification of moduli of K3 surfaces of degree 2, 
arXiv:1903.09742.

\bibitem[Amb05]{Ambro}
F.~Ambro, 
The moduli b-divisor of an lc-trivial fibration, 
Compositio Math. 141 (2005) 385-403

\bibitem[AP06]{AP}
C. Arezzo, F. Pacard, Blowing up and desingularizing 
K\"ahler orbifolds with constant scalar curvature. Acta Math. 196(2) , 179-228 (2006). 

\bibitem[AMRT]{AMRT}
A. Ash, D. Mumford, M. Rapoport, Y.-S. Tai, Smooth compactifications 
of locally symmetric varieties, Cambridge Mathematical Library, 
Second edition (2010). 

\bibitem[AB19]{AB}
K.Ascher, D.Bejleri, 
Compact moduli spaces of elliptic K3 surfaces, 
arXiv:1902.10686v3. 

\bibitem[AKO06]{AKO}
D.~Auroux, L.~Katzarkov, D.~Orlov, 
Mirror symmetry for Del Pezzo surfaces: Vanishing cycles and coherent sheaves, 
Invent. Math. 166 (2006), no. 3, 537-582. 


\bibitem[BS78]{BS78} 
I.N.Bernstein, O.V.Shvartsman, 
Chevalley's theorem for complex crystallographic Coxeter groups (Russian), 
Func. An. Appl. 12 308 (1978). 


\bibitem[BJ17]{BJ}
S.~Boucksom, M.~Jonsson, 
Tropical and non-Archimedean limits of degenerating families of volume forms, 
Journal de l'\'Ecole polytechnique - Math\'ematiques,  Tome 4  (2017),  p. 87-139. 


\bibitem[BL00]{BL}
J.~Bryan, N.C.Leung, 
The enumerative geometry of K3 surfaces and modular forms, 
J.\ Amer.\ Math.\ Soc.\ 13 (2000), no.2, 371-410. 

\bibitem[Brun15]{Brun}
A.~Brunyate, 
A modular compactification of the space of elliptic K3 surfaces, 
UGA Ph.D thesis (2015). 

\bibitem[CPS]{CPS}
M.Carl, M.Pumperla, B.Siebert, 
A tropical view on Landau-Ginzburg models, 
preprint. 

\bibitem[CFG92]{CFG}
J. Cheeger, K. Fukaya, and M. Gromov, Nilpotent structures and invariant metrics on collapsed manifolds, 
J. Amer. Math. Soc. 5 (1992), 327-372.  


\bibitem[CC97]{ChCo}
J.~Cheeger, T.H.~Colding, 
On the structure of spaces with Ricci curvature bounded below I, 
J.\ Differential Geom.\ 46 (1997), no.~3, 406--480. 

\bibitem[CC16]{CC}
G.~Chen, X.-X.~Chen,  
Gravitational instantons with faster than quadratic curvature decay (III), arXiv:1603.08465. 

\bibitem[CJL19]{CJL}
T.Collins, A.Jacob, Y-S.Lin, 
Special Lagrangian submanifolds of log Calabi-Yau manifolds, 
arXiv:1904.08363 (2019). 

\bibitem[CD07]{CD}
A. Clingher, C. Doran, 
Modular invariants for lattice polarized K3 surfaces, 
Michigan Math. J. 55 (2007), no. 2, 355-393. 

\bibitem[CM05]{CM05}
A. Clingher, J. Morgan, 
Mathematics underlying the F-theory/heterotic string duality in eight dimensions, 
Communications in Mathematical Physics 254 (3), 513-563 (2005). 

\bibitem[Cox95]{Cox}
D.~Cox, 
The homogenous coordinate ring of a toric variety, 
J. Algebraic Geom., 4 (1995): 17-50. 

\bibitem[Dav65]{Dav}
H.~Davenport, 
On $f^{3}(t)-g^{2}(t)$, 
Norske Vid.\ Slesk. Forh.\ (Trondheim) 38 (1965), 86-87. 

\bibitem[DHT17]{DHT}
C.F.Doran, A.Harder, A.Thompson, 
Mirror symmetry, Tyurin degenerations and fibrations on 
Calabi-Yau manifolds, proceedings of the conference String-Math, 2015, 
93-131 (2017). 

\bibitem[Dol96]{Dolg}
I.Dolgachev, 
Mirror symmetry for lattice polarised K3 surfaces, 
J. Math. Sci. 81 (1996), no.3, 2599-2630. 


\bibitem[Don02]{Don02}
S.~Donaldson, 
Scalar curvature and stability of toric varieties, 
J.\ Differential Geom.\ 62 (2002), no.~2, 289--349.

\bibitem[Dre]{Drezet}
J-M. Dr\'ezet, Luna's slice theorem and applications, 
Algebraic group actions 
and quotients, 39-89, Hindawi Publ. Corp., Cairo, (2004).

\bibitem[EF19]{EF}
P.Engel, R.Friedman, 
Smoothings and rational double point adjacencies for 
cusp singularities, J. Differential Geom. (2019). 

\bibitem[EHX97]{EHX}
T.Eguchi, K.Hori, C-S. Xiong, 
Gravitational quantum cohomology, 
Internat. J. Modern Phys. A 12 (1997), no.9, 1743-1782. 

\bibitem[Fjn18]{Fjn}
O.Fujino, 
Semipositivity theorems for moduli problems, Ann.\ of Math., 
pp. 639-665 from Volume 187 (2018),

\bibitem[Fri84]{Fri84}
R.~Friedman, 
A new proof of the global Torelli theorem for K3 surfaces, 
Ann.\ of Math. vol.\ 120, no.2, 237-269 (1984). 

\bibitem[FMW97]{FMW}
R.Friedman, J.Morgan, E.Witten, 
Vector bundles and F-theory, 
Commun. Math. Phys. 187. 679-743 (1997). 


\bibitem[Fscl16]{Fos}
L.~Foscolo, 
ALF gravitational instantons and collapsing Ricci-flat metrics on the K3 surface,
to appear in J.~Differential Geom. (arXiv:1603.06315).


\bibitem[Freed99]{Freed99}
D.~Freed, Special K\"ahler manifolds, 
Comm. Math. Phys. 203 (1999), no. 1, 31--52. 

\bibitem[Fri83]{Fri.smoothing}
R.~Friedman, 
Global smoothings of varieties with normal crossings, 
Annals of Mathematics, 118 (1983), 75-114

\bibitem[Fri84]{Fri}
R.~Friedman, 
A new proof of the global Torelli theorem for K3 surfaces, 
Ann. of Math. (2) 120 (1984), no. 2, 237--269. 

\bibitem[FriMrg94]{FMg}
R.~Friedman and J.~Morgan, Smooth Four-Manifolds and Complex Surfaces,  
Ergebnisse der Mathematik und ihrer Grenzgebiete (3), 27. Springer-Verlag, (1994). 

\bibitem[FriMor83]{FM}
R.~Friedman, D.~Morrison, 
The birational geometry of degenerations,
Progress in Math.\ 29, Birkh\"auser, (1983). 

\bibitem[FriSca86]{FriS}
R.~Friedman, F.~Scattone, 
Type III degenerations of $K3$ surfaces, Invent.\ Math.\ 83 (1986), no.~1, 1--39. 

\bibitem[Fri15]{Fri.recent}
R.~Friedman, 
On the geometry of anticanonical pairs, 
arXiv:1502.02560. 

\bibitem[Fuk87a]{Fuk.mGH}
K. Fukaya, 
Collapsing of Riemannian manifolds and 
eigenvalues of Laplace operator, Invent.\ Math., 
87 (1987), 517-547. 

\bibitem[Fuk87b]{FukI}
K. Fukaya, Collapsing Riemannian manifolds to ones of lower dimensions 
J. Differential Geom. 25 (1987), 139-156. 

\bibitem[Fuk89]{FukII}
K. Fukaya, Collapsing Riemannian manifolds to ones of lower 
dimensions, II, 
J. Math. Soc. Japan 41 (1989), no. 2, 333--356 

\bibitem[GHK15]{GHK}
M. Gross, P.Hacking, S.Keel, 
Moduli of surfaces with an anti-canonical cycle, 
Comp.\ Math Volume 151, Issue 2 (2015) , pp. 265-291. 

\bibitem[GTZ13]{GTZ1}
M.~Gross, V.~Tosatti, Y.~Zhang, 
Collapsing of abelian fibered Calabi-Yau manifolds, Duke Math.\ J.,
162, (2013), no.~3, 517--551. 

\bibitem[GTZ16]{GTZ2}
M.~Gross, V.~Tosatti, Y.~Zhang, 
Gromov-Hausdorff collapsing of Calabi-Yau manifolds, Comm.\ Anal.\ Geom.\ 24 (2016), no.~1, 93--113. 

\bibitem[Hart]{Hart}
R.~Hartshorne, 
Algebraic Geometry, Graduate Texts in Mathematics, Springer-Verlag. 

\bibitem[HKY]{HKY}
P.Hacking, S.Keel, T.Y.Yu, 
Theta functions and the secondary fan for Fano varieties, 
in preparation. 

\bibitem[HL]{HL}
G.~Heckman, E.~Looijenga, The moduli space of rational elliptic surfaces, Algebraic geometry 2000, Azumino (Hotaka), Adv. Stud. Pure Math., vol. 36, Math. Soc. Japan, Tokyo, 2002, pp. 185-248. 

\bibitem[HaZu94]{HZII}
M.~Harris, S.~Zucker, 
Boundary cohomology of Shimura varieties II. Hodge theory at the boundary, 
Invent.\ Math.\ 116 (1994), 243--307. 


\bibitem[HaUe18]{HU}
K.Hashimoto, K.Ueda, 
Reconstruction of general elliptic K3 surfaces from their Gromov-Hausdorff limits, arXiv:1805.01719



\bibitem[Hein12]{Hein}
H-J. Hein, 
Gravitational instantons from rational elliptic surfaces, 
J. Amer. Math. Soc. 25 (2012), 355-393


\bibitem[HSVZ18]{HSVZ}
H.-J.~Hein, S.~Sun, J.~Viaclovsky, R.~Zhang, 
Nilpotent structures and collapsing Ricci-flat metrics on K3 surfaces, 
arXiv:1807.09367. 

\bibitem[HeTos15]{HT}
H.-J.~Hein, V.~Tosatti, 
Remarks on the collapsing of torus fibered Calabi-Yau manifolds, 
Bull. Lond.\ Math.\ Soc.\ 47 (2015), no.~6, 1021--1027.

\bibitem[HSZ19]{HSZ}
S.~Honda, S.~Sun, R.~Zhang, 
A note on the collapsing geometry of hyperK\"ahler four manifolds, 
Sci. China Math. 62, 2195-2210 (2019).  

\bibitem[Huy99]{Huy.HK}
D.~Huybrechts, 
Compact hyper-K\"ahler manifolds: basic results, 
Invent.\ Math.\ 135 (1999), no.~1, 63--113.


\bibitem[Huy01]{Huy.HK.book}
D.~Huybrechts, 
Compact HyperK\"ahler manifolds, in 
``Calabi-Yau Manifolds and Related Geometries''
Lectures at a Summer School in Nordfjordeid, Norway, June 2001. 
Springer-Verlag (2001). 


\bibitem[Huy04]{HuyMM}
D. Huybrechts, 
Moduli spaces of hyperk\"ahler manifolds and mirror symmetry. In \textit{Intersection theory and moduli}, 185--247, ICTP Lect.\ Notes, XIX, Abdus Salam Int.\ Cent.\ Theoret.\ Phys., Trieste, (2004).

\bibitem[Huy16]{Huy}
D.~Huybrechts, 
Lectures on K3 surfaces,  Cambridge Studies in Advanced Mathematics, 158, 
Cambridge University Press (2016). 

 

\bibitem[Iit82]{Iit82}
S. Iitaka,  Algebraic Geometry -- 
An Introduction to Birational Geometry of Algebraic Varieties.
Graduate Texts in Mathematics 76. Berlin: Springer, 1982.

\bibitem[Kas77]{Kas}
A. Kas, Weierstrass normal forms and invariants of elliptic surfaces, Trans. Amer. Math. Soc, 225 (1977), 259--266. 

\bibitem[KeMo97]{KeelMori}
S.Keel, S.Mori, 
Quotients by groupoids, Annals of Mathematics (1997), 2, 145 (1): 193-213. 

\bibitem[Kob90a]{Kob90}
R.~Kobayashi, 
Moduli of Einstein metrics on a $K3$ Surface and degeneration of type I, 
In \textit{K\"ahler Metrics and Moduli Spaces}, 257--311, 
Adv.\ Stud.\ Pure Math., 18-II, T.~Ochiai. ed.\ Academic Press, (1990). 

\bibitem[Kob90b]{Kobb90}
R.~Kobayashi, 
Ricci-flat K\"ahler metrics on affine algebraic manifolds 
and degenerations of K\"ahler-Einstein K3 surfaces, 
In \textit{K\"ahler Metrics and Moduli Spaces}, 257--311, 
Adv.\ Stud.\ Pure Math., 18-II, T.~Ochiai. ed.\ Academic Press, (1990). 


\bibitem[Kod63]{Kod}
K.~Kodaira, 
On compact analytic surfaces: II, 
Ann.\ of Math.\ 77 (1963), no.~3, 563--626. 




\bibitem[KolMor98]{KM}
J.~Koll\'ar, S.~Mori, 
Birational Geometry of Algebraic Varieties, 
Cambridge Tracts in Mathematics, vol.\ 134, 
Cambridge University Press, (1998). 


\bibitem[KS06]{KS}
M.~Kontsevich, Y.~Soibelman, 
Affine structures and non-archimedean analytic spaces, 
In \textit{The Unity of Mathematics}, 321--385,
Progr.\ Math., 244, Birkh\"auser, 2006. 


\bibitem[Kon85]{Kondo}
S.~Kondo, 
Type II degeneration of K3 surfaces, 
Nagoya J. Math (1985) vol.99, 11-30. 

\bibitem[KP17]{Kov}
S. Kov\'acs, Z. Patakfalvi, 
Projectivity of the moduli space of stable log-varieties 
and subadditivity of log-Kodaira dimension, J. Amer. Math. Soc. 30 (2017), no. 4, 
959-1021.

\bibitem[LO19]{LO}
R.Laza, K.OGrady, 
Birational geometry of the moduli space of quartic K3 surfaces, 
Compositio Math. 155 (2019), no. 9, 1655-1710. 

\bibitem[LLL20]{LLL}
S-C.Liu, T-J.Lee, Y-S.Lee, 
On the complex affine structures of SYZ fibration of 
Del Pezzo surfaces, arXiv:2005.04825. 

\bibitem[LM00]{LM}
G.~Laumon, L.~Moret-Bailly, 
Champs Algebriques, 
Ergebnisse der Mathematik, Springer-Verlag Volume 39 (2000). 

\bibitem[Looi76]{Looi76}
E.~Looijenga, Root sytems and elliptic curves, 
Invent.\ Math.\ 38, 17-32 (1976). 

\bibitem[Luna73]{Luna}

D. Luna, Slices \'etales, Sur les groupes alg\'ebriques, Bull. Soc. Math. France
(1973).

\bibitem[Manin]{Manin}
Y. Manin, 
Cubic forms, Algebra, Geometry, Arithmetic,  
Elsevier (1986). 

\bibitem[McL98]{ML}
R. McLean, 
Deformations of calibrated submanifolds, 
Comm.\ Anal.\ Geom.\ 6 (1998), 705--747.




\bibitem[Mil]{Mil}
J.~Milne, 
Algebraic Number theory, Lecture Notes. 
available at \texttt{https://www.jmilne.org/math/CourseNotes/ANT210.pdf}




\bibitem[Mir81]{Mir}
R.~Miranda, The moduli of Weierstrass fibrations over $\mathbb{P}^1$, 
Math.\ Ann.\ 255 (1981), no.~3, 379--394. 


\bibitem[Mum72b]{Mum72.AV}
D.~Mumford, 
An analytic construction of degenerating abelian varieties over complete local rings, 
Compositio Math (1972). 

\bibitem[Mum77]{MumHir}
D.Mumford, 
Hirzebruch's proportionality theorem in the noncompact case, Invent. Math. 42 (1977) 

\bibitem[Nik75]{Nik75}
V.V.~Nikulin, Kummer surfaces, 
Izv.\ Akad.\ Nauk SSSR Ser.\ Mat.\ 39 (1975), no.~2, 278--293, 471. 

\bibitem[Nik79]{Nik}
V.V.~Nikulin, 
Integer symmetric bilinear forms and some of their geometric applications, 
Izv.\ Akad.\ Nauk SSSR Ser.\ Mat.\ 43 (1999), no.~1, 111--177, 238. 

\bibitem[Od18a]{Od.Mg}
Y.~Odaka, 
Tropical Geometric Compactification of Moduli, I, 
- $M_{g}$ case, Moduli of K-stable varieties, Springer INdAM series 31 (2018). 


\bibitem[Od18b]{Od.Ag}
Y.~Odaka, 
Tropical Geometric Compactification of Moduli, II - Ag case and holomorphic limits -
IMRN (2018). 

\bibitem[OO18]{OO18}
Y.~Odaka, Y.~Oshima, 
Collapsing K3 surfaces, Tropical geometry and Moduli compactifications of Satake and Morgan-Shalen type, 
arXiv:1810.07685. 


\bibitem[Odk12a]{Od2}
Y.~Odaka, 
The Calabi conjecture and K-stability, 
Int.\ Math.\ Res.\ Not.\ IMRN (2012), no.~10, 2272--2288. 

\bibitem[Odk12b]{Od12}
Y.~Odaka, 
On the moduli of K\"ahler-Einstein Fano manifolds, 
arXiv:1211.4833v4. Proceeding of Kinosaki algebraic geometry symposium 2013. 

\bibitem[Odk13a]{Od}
Y.~Odaka, 
The GIT stability of polarized varieties via Discrepancy, 
Ann.\ of Math.\ (2) 177 (2013), no.~2, 645--661. 

\bibitem[Odk13b]{Od0}
Y.~Odaka, 
A generalization of the Ross-Thomas slope theory, 
Osaka J.\ Math.\ 50 (2013), no.~1, 171--185. 

\bibitem[Odk14]{TGC.I}
Y.~Odaka, 
Tropical geometric compactification of Moduli, I - $M_g$ case-, 
to appear in Proceeding Volume for the Workshop on moduli of K-stable varieties. 
Springer-Verlag INdAM-series. (available at arXiv:1406.7772)


\bibitem[Odk15]{Od15}
Y.~Odaka, 
Compact moduli spaces of K\"ahler-Einstein Fano varieties, 
Publ.\ Res.\ Int.\ Math.\ Sci.\ 51 (2015), no.~3, 549--565. 


\bibitem[Odk18]{TGC.II}
Y.~Odaka, 
Tropical geometric compactification of Moduli, II - $A_g$ case and holomorphic limits -, 
Int.\ Math.\ Res.\ Not.\ IMRN (2018), 
https://doi.org/10.1093/imrn/rnx293. 

\bibitem[OO18a]{OO.announce}
Y.~Odaka, Y.~Oshima, 
Collapsing K3 surfaces and moduli compactification, 
Proc.\ Japan Acad.\ Ser.\ A Math.\ Sci. 94 (2018), no.~8, 81--86.


\bibitem[OO18b]{OO18}
Y.~Odaka, Y.~Oshima, 
Collapsing K3 surfaces, Tropical geometry and Moduli compactifications of Satake, Morgan-Shalen type, 
arXiv:1810.07685 (2018). 

\bibitem[OSS16]{OSS}
Y.~Odaka, C.~Spotti, S.~Sun, 
Compact moduli spaces of del Pezzo surfaces and K\"ahler-Einstein metrics, 
J. Differential Geom.\ 102 (2016), no.~1, 127--172. 

\bibitem[Od20a]{ops}
Y.Odaka, 
Polystable log 
Calabi-Yau varieties and Gravitational instantons, 
arXiv:2009.13876. 

\bibitem[Od20b]{SBB}
Y.Odaka, 
to appear. 

\bibitem[Od20c]{Od.survey}
Y.Odaka, 
On the K-stability and the moduli problem of varieties, 
Suugaku vol. 72, no.3 2020.  
(in Japanese). 

\bibitem[Osh]{Osh}
Y.~Oshima, 
in preparation. 


\bibitem[Ohno18]{Ohno}
K.~Ohno, 
Minimizing CM degree and slope stability of projective varieties, 
arXiv:1811.12229 (2018). 



\bibitem[Pin77]{Pin77}
H. C. Pinkham, Simple elliptic singularities, Del Pezzo surfaces and Cremona trans- formations, Several complex variables (Proc. Sympos. Pure Math., Vol. XXX, Part 1, Williams Coll., 1975), Amer. Math. Soc., Providence, R. I., 1977, pp. 69-71.

\bibitem[Sei01]{Sei}
P.Seidel, 
More about vanishing cycles and mutation, 
Symplectic Geometry and Mirror Symmetry: 
Proceedings of the $4$-th KIAS annual international 
conference (Seoul, 2000) K. Fukaya, 
Y.-G.Oh, K.Ono, G.Tian, eds. World Sci., 2001, 
429-465. 



\bibitem[Sat56]{Sat0}
I.~Satake, 
On the compactification of the Siegel space, 
J.\ Indian Math.\ Soc.\ 20 (1956), 259--281. 

\bibitem[Sat60a]{Sat1}
I.~Satake, 
On representations and compactifications of symmetric Riemannian spaces, 
Ann.\ of Math.\ (2) 71 (1960), 77--110. 

\bibitem[Sat60b]{Sat2}
I.~Satake, 
On compactifications of the quotient spaces for arithmetically defined discontinuous groups, 
Ann.\ of Math.\ (2) 72 (1960), 555--580. 

\bibitem[Sca87]{Sca}
F.~Scattone, 
On the compactification of moduli spaces for algebraic $K3$ surfaces, 
Mem.\ Amer.\ Math.\ Soc.\ 70 (1987), no.~374.

\bibitem[Schm73]{Schm}
W.~Schmid, 
Variation of Hodge structure: the singularities of the period mapping, 
Invent.\ Math.\ 22 (1973), 211-319. 


\bibitem[Ser73]{Serre}
J.-P.~Serre, 
A course in arithmetic, Graduate Texts in Mathematics, No.~7, 
Springer-Verlag, (1973). 


\bibitem[Sha80]{Shah}
J.~Shah, 
A complete moduli space for K3 surfaces of degree $2$, 
Ann.\ of Math.\ (2) 112 (1980), no. 3, 485--510. 

\bibitem[Sha81]{Shah2}
J.~Shah, 
Degenerations of $K3$ surfaces of degree $4$, 
Trans.\ Amer.\ Math.\ Soc.\ 263 (1981), no. 2, 271--308. 

\bibitem[She83]{She}
N.I.~Shepherd-Barron, Degenerations with numerically effective canonical divisor, 
In \textit{The birational geometry of degenerations (Cambridge, Mass., 1981)}, 33--84,
Progr.\ Math., vol.~29, Birkh\"auser, Boston, (1983).



\bibitem[She83]{She}
N.ShepherdBarron, 
Extending polarizations on families of K3 surfaces, 
(Cambridge, MA, 1981), 
Progr. Math vol 29, 135-171, 
Birkh\"auser, Boston, 1983. 

\bibitem[Shi05]{Shi}
T.Shioda, 
Elliptic surfaces and Davenport-Stothers triples, 
Comment.\ Math.\ Univ.\ St.\ Pauli \textbf{54} (2005), 49-68. 

\bibitem[Sto81]{Sto}
W.W.Stothers, 
Polynomial identities and Hauptmoduln, 
Quart.\ J.\ Math.\ Oxford (2) \textbf{32} (1981) 349-370. 

\bibitem[Take89]{Takeuchi}
K. Takeuchi. Some birational maps of Fano 3-folds. Compositio Math., 71(3):265-283, 1989.


\bibitem[TY90]{TY90}
G.~Tian, S-T.Yau, 
Complete K\"ahler manifolds with zero Ricci curvature I, 
J.\ Amer.\ Math.\ Sci., 3 (1990), 579-610. 



\bibitem[YZ96]{YZ}
S.T.Yau, E.Zaslow, 
BPS states, string duality, and nodal curves on K3, 
Nuclear Phys. B 471 (1996), 503-512.   

\bibitem[Zan95]{Zan}
U.~Zannier, 
On Davenport's bound for the degree of $f^{3}-g^{2}$ and 
Riemann's Existence theorem, 
Acta Arithmetica LXXI.2 (1995). 
}


\end{thebibliography}
\end{document}